\documentclass{amsart}
\usepackage{amsmath}%
\usepackage{amsthm}
\usepackage{amsfonts}%
\usepackage{amssymb}%
\usepackage{graphicx}
\usepackage{bm}
\usepackage{comment}
\usepackage{mathtools}
\usepackage{bbm}

\usepackage[all,cmtip]{xy}
\usepackage{tikz-cd}
\usetikzlibrary{cd}

\usepackage[utf8]{inputenc}
\usepackage{a4}
\usepackage[hidelinks]{hyperref}
\usepackage{enumitem}

\usepackage{mathrsfs}

\DeclareFontFamily{OT1}{pzc}{}
\DeclareFontShape{OT1}{pzc}{m}{it}{<-> s * [1.0] pzcmi7t}{}
\DeclareMathAlphabet{\mathpzc}{OT1}{pzc}{m}{it}

\usepackage[singlespacing]{setspace}
\newtheorem{theorem}{Theorem}[section]

\newtheorem{corollary}[theorem]{Corollary}

\newtheorem{definition}[theorem]{Definition}

\newtheorem{lemma}[theorem]{Lemma}

\newtheorem{proposition}[theorem]{Proposition}
\newtheorem{remark}[theorem]{Remark}

\numberwithin{equation}{section}

\def\XXint#1#2#3{{\setbox0=\hbox{$#1{#2#3}{\int}$}
\vcenter{\hbox{$#2#3$}}\kern-.5\wd0}}

\newcommand{\cF}{\mathcal{F}}
\newcommand{\cI}{\mathcal{I}}
\newcommand{\R}{\mathbb{R}}

\newcommand{\N}{\mathbb{N}}
\newcommand{\Z}{\mathbb{Z}}

\newcommand{\Ha}{\mathcal{H}}
\newcommand{\leb}{\mathcal{L}}

\newcommand{\sgn}{\operatorname{sgn}}
\newcommand{\spt}{\operatorname{spt}}

\newcommand{\diam}{\operatorname{diam}}
\newcommand{\inte}{\operatorname{int}}

\newcommand{\Lip}{\operatorname{Lip}}
\newcommand{\LIP}{\operatorname{LIP}}
\newcommand{\im}{\operatorname{im}}

\newcommand{\id}{\mathrm {id}}

\renewcommand{\epsilon}{\varepsilon}

\newcommand{\on}{\:\mbox{\rule{0.1ex}{1.2ex}\rule{1.1ex}{0.1ex}}\:}

\DeclareMathOperator{\bI}{\mathbf{I}}
\DeclareMathOperator{\st}{\textup{st}}
\DeclareMathOperator{\bM}{\mathbf{M}}
\DeclareMathOperator{\bN}{\mathbf{N}}

\DeclareMathOperator{\mass}{\mathbf{M}}

\newcommand{\bb}[1]{\llbracket #1\rrbracket}
\usepackage{stmaryrd}
\newcommand{\norm}[1]{\lVert#1\rVert}

\newcommand{\mres}{\mathbin{\vrule height 1.6ex depth 0pt width
0.13ex\vrule height 0.13ex depth 0pt width 1.3ex}}

\usepackage[colorinlistoftodos]{todonotes}
\usepackage{import}
\usepackage{xifthen}
\usepackage{pdfpages}
\usepackage{transparent}
\usepackage{xcolor}
\usepackage{stmaryrd}

\makeatletter
\newcommand*{\cone}{%
	{%
		\mathpalette\@coneOf{\times}%
	}%
}
\newcommand*{\@coneOf}[2]{%
	\sbox0{$\m@th#1\mathsf{#2}$}%
	\mathsf{#2}%
	\kern-\wd0 %
	\mkern2.00mu\relax
	\nonscript\mkern0.25mu\relax
	\mathsf{#2}%
}

\usepackage{etoolbox}

\makeatletter
\patchcmd{\@setaddresses}{\indent}{\noindent}{}{}
\patchcmd{\@setaddresses}{\indent}{\noindent}{}{}
\patchcmd{\@setaddresses}{\indent}{\noindent}{}{}
\patchcmd{\@setaddresses}{\indent}{\noindent}{}{}
\makeatother

\keywords{Metric geometry, Metric manifolds, Integral currents, Nagata dimension, degree theory}
\subjclass[2020]{Primary 53C23; Secondary 49Q15, 28A75}
\thanks{D.M. was supported by Swiss National Science Foundation grant 212867.}

\author{Denis Marti}

\address{Department of Mathematics\\ University of Fribourg\\  Chemin du Mus\'ee 23\\  1700 Fribourg, Switzerland}
\email{denis.marti@unifr.ch}

\title[The metric fundamental class of non-orientable manifolds]{The metric fundamental class of non-orientable manifolds and manifolds with boundary}

\begin{document}

\begin{abstract}
    We introduce the metric fundamental class for metric spaces that are homeomorphic to compact, non-orientable, smooth manifolds with (possibly empty) boundary. This is an integer rectifiable current that provides an analytic representation of the topological fundamental class of the space. Under certain weak geometric conditions, we show the existence of such a current, extending earlier results for orientable, closed manifolds obtained in collaboration with Basso and Wenger. As an application, we present new rectifiability results.
\end{abstract}

\maketitle


\section{Introduction}
    In an earlier work jointly with Basso and Wenger, the author showed that, under certain geometric conditions, a metric space that is homeomorphic to a closed, orientable smooth manifold admits a non-trivial integral cycle in the sense of Ambrosio–Kirchheim \cite{ambrosio-kirchheim-2000}. This integral cycle serves as a metric analogue of the topological fundamental class of a smooth manifold and is referred to as the metric fundamental class of the manifold.

    \begin{definition}\label{def: metric fundamental class}
        Let $X$ be a metric space with finite Hausdorff $n$-measure that is homeomorphic to a closed, oriented, Riemannian $n$-manifold $M$. A metric fundamental class of $X$ is an integral current $T\in \bI_n(X)$ with $\partial T=0$ such that
        \begin{itemize}        
            \item[(a)] $\varphi_\# T = \deg(\varphi) \cdot \bb{M}$ for every Lipschitz map $\varphi \colon X \to M$;
            \item[(b)] there exists $C>0$ such that $\norm{T}\leq C \cdot \Ha^n$.
        \end{itemize}
    \end{definition}

    Here, $M$ is equipped with any Riemannian metric and $\bb{M}$ denotes its fundamental class given by integration of differential $n$-forms, and $\norm{T}$ is the mass measure of $T$. Informally speaking, the conditions (a) and (b) ensure that $T$ is compatible with the topological and metric structure of $X$, respectively. We note that a metric fundamental class, when it exists, is unique and generates the $n$th homology group via integral currents; c.f. \cite[Proposition 5.5]{BMW}. A metric space homeomorphic to a smooth manifold is called  a metric manifold. Such spaces and the relationship between their geometric and analytic properties play a central role in metric geometry; see e.g. \cite{bonk-kleiner, david-american-2016,Heinonen-Rickman-2002, meier2025energy, semmes-curves}. In this vein, the metric fundamental class offers a powerful tool for studying metric manifolds. For instance, a deep theorem of Semmes \cite{semmes-curves} guarantees the validity of a Poincar\'e inequality in Ahlfors regular and linearly locally contractible metric spaces that are homeomorphic to a closed, oriented, smooth manifold. In \cite{BMW}, a new proof of this result was provided using the metric fundamental class; see also \cite{BMW, heinonen-keith-2011, heinonen-sullivan-2002, marti2024characterization} for more applications.

    \medskip

    The goal of this article is to extend these ideas and results to manifolds that are non-orientable and to manifolds with boundary. In the non-orientable case, every top-dimensional integral current necessarily has non-zero boundary and the notion of degree is not well-defined. Therefore, we adapt the definition of a metric fundamental class. We require the current to have no boundary modulo $2$ as defined in \cite{Ambrosio-Katz, Ambrosio-Wenger}. Moreover, in Definition \ref{def: metric fundamental class}(a) we use the degree defined via singular homology with coefficients in $\Z/2\Z$. More precisely, an integer rectifiable current $T$ satisfies $\partial T = 0 \mod 2$ if $\cF_2(\partial T) = 0$. Here, $\cF_2(\partial T)$ denotes the flat norm modulo $2$ of $\partial T$. This distance is defined as the infimum of $\bM(U)+\bM(V)$ over integer rectifiable currents $U$ and $V$ of dimensions $n-1$ and $n$, respectively, such that $\partial T = U + \partial V + 2Q$ for some flat current $Q$. We refer to Section \ref{sec: flat-currents} for a detailed overview of flat currents modulo $2$. There also exists a well-developed theory of flat chains with coefficients in any normed abelian group \cite{De-pauw-rect-flat-chains} that generalizes the flat chains considered in \cite{brian-white-g-chains} to metric spaces. This theory also provides a natural framework to define a metric fundamental class for non-orientable manifolds. Nevertheless, we have chosen to work with Ambrosio–Kirchheim currents because of a direct connection with the results in \cite{BMW} for orientable manifolds.

    \subsection{Statement of main results}
        A metric space $X$ is called a metric $n$-manifold if it is homeomorphic to a compact smooth $n$-manifold $M$. We say that $X$ is orientable, non-orientable, or closed, depending on whether $M$ has the corresponding properties. In case $M$ has non-empty boundary $\partial M$, we denote by $\partial X$ the image of $\partial M$ under the homeomorphism from $X$ to $M$. Every manifold considered in this article is assumed to be connected. We write $\cI_n(X)$ for the space of integer rectifiable currents in $X$; see Section \ref{sec: currents} for the relevant definitions. 

        \medskip

        The first result provides the existence of a metric fundamental class in orientable metric manifolds that are linearly locally contractible and extends \cite[Theorem 1.1]{BMW} to manifolds with boundary. A metric space is said to be linearly locally contractible if there exists $\lambda>0$ such that every ball of radius  $0<r<\diam X/ \lambda$ is contractible within the ball with the same center and radius $\lambda r$. This property often appears in various contexts in metric geometry; see e.g. \cite{bonk-kleiner, david-american-2016, Grove-Petersen-Wu-1990,Hajlasz-Koskela-1995,Kleiner-Lang-2020,Semmes-good-metric-spaces-1996}.
    
        \begin{theorem}\label{thme: existence-orient-llc}
            Let $X$ be a compact, orientable metric $n$-manifold with finite Hausdorff $n$-measure and (possibly empty) boundary. Suppose that $X$ is linearly locally contractible and $\Ha^n(\partial X) = 0$. Then, $X$ has a metric fundamental class $T \in \cI_n(X)$ with
            \begin{equation}\label{eq: mass-measure-hausdorff-existence-weak-llc}
                C^{-1} \Ha^n \leq \norm{T} \leq C\Ha^n,
            \end{equation}
            and whenever $S \in \bI_n(X)$ satisfies $\spt (\partial S) \subset \partial X$, then there exists $k \in \Z$ such that $S = k \cdot T$. 
        \end{theorem}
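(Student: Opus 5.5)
Our plan is to reduce to the closed case treated in \cite[Theorem 1.1]{BMW} by doubling, and to transfer the conclusions back to $X$. If $\partial X=\emptyset$ there is nothing new to prove. Otherwise we form the metric double $DX = X\sqcup_{\partial X} X$ with the glued length-type metric. Since $\partial X$ is collared in the smooth model, $DX$ is homeomorphic to the closed, orientable smooth manifold $DM$. Moreover $\Ha^n(DX)\le 2\Ha^n(X)<\infty$, because the two copies of $X$ embed isometrically into $DX$ (distances across the seam are realized through $\partial X$). The first task is to check that $DX$ is linearly locally contractible. A ball in $DX$ meeting the seam is contained in the union of two balls $B_i$ in the copies of $X$ with the same center projected and comparable radius. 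We want to contract it in a controlled way. To do this we first deformation retract slightly toward $\partial X$ using the folding map $DX\to X$. We then contract inside $X$ and reflect. Concretely, we use the fold $\pi\colon DX\to X$ together with a contraction of $\pi(B)$ in $B_X(x,\lambda r)$ relative to nothing. To lift it we need the homotopy to be symmetric. The cleanest route is therefore to avoid contracting $DX$ directly. Instead we verify the weaker hypothesis actually used in \cite{BMW}, which is a local homology / filling condition, on $DX$ via Mayer--Vietoris for the two halves glued along $\partial X$. We expect this step to be the main obstacle. If it fails, the fallback is to redo the construction of \cite{BMW} (Lipschitz approximation of a triangulation map $X\to M$ and pull-back of $\bb{M}$ via an $\varepsilon$-net nerve) directly on $X$. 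In that approach the boundary only affects the simplices near $\partial X$, and $\Ha^n(\partial X)=0$ ensures these contribute no mass in the limit.

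Granting a metric fundamental class $\hat T$ of $DX$ with $C^{-1}\Ha^n\le\norm{\hat T}\le C\Ha^n$, we set $T=\hat T\mres X$ (first copy). This lies in $\cI_n(X)$, and the two-sided mass bound follows by restriction. Its boundary is supported in $\partial X$, since $\partial\hat T=0$ and restriction to a closed set changes boundary only on the topological frontier. Boundary mass finiteness can be obtained by slicing with the distance to $\partial X$ at a good level, but finite boundary mass is not needed if $T$ is only claimed in $\cI_n$. For property (a) relative to the boundary, take a Lipschitz $\varphi\colon X\to M$ of the relevant class (e.g.\ $\varphi(\partial X)\subset\partial M$). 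Extend it symmetrically to $D\varphi\colon DX\to DM$ and apply (a) for $\hat T$. Restricting to $M$ gives $\varphi_\#T=\deg(\varphi)\bb{M}$; here $\Ha^n(\partial X)=0$ ensures that no mass sits on the seam.

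For uniqueness, let $S\in\bI_n(X)$ with $\spt\partial S\subset\partial X$. We reflect $S$ to get $\hat S=S-\iota_\#S$ on $DX$, with the sign chosen using the orientation-reversing reflection $\iota$, so that the boundaries cancel and $\partial\hat S=0$. By uniqueness and generation for closed manifolds \cite[Proposition 5.5]{BMW}, $\hat S=k\hat T$ for some $k\in\Z$. We have $\hat T=T-\iota_\# T$ by the uniqueness of $\hat T$, since $-\iota_\#\hat T$ is also a fundamental class. Restricting to the first copy, and using $\norm{S}(\partial X)\le C\Ha^n(\partial X)=0$ to ignore the seam, gives $S=kT$.
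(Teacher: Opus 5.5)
There is a genuine gap at exactly the step you flag as the main obstacle. You never obtain the fundamental class $\hat T$ of the double, and neither of your routes can produce it.

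The closed-case theorem of \cite{BMW} needs $DX$ to be linearly locally contractible with uniform constants. This fails in general, even when $X$ is linearly locally contractible and $\Ha^n(\partial X)=0$, as the paper remarks in its introduction. Take the planar cusp $X=\{(x,y)\in\R^2: 0\le x\le 1,\ |y|\le x^2\}$ with the Euclidean metric. Every ball of radius $<1/2$ meets $X$ in a union of vertical segments over an interval, so $X$ is linearly locally contractible. In $DX$, however, the two copies of $\{x_0\}\times[-x_0^2,x_0^2]$ form a Jordan curve. It lies in the ball of radius $3x_0^2$ about $(x_0,0)$ and separates the seam points $(0,0)$ and $(1,0)$. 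So it is not null-homotopic, not even null-homologous, in any ball about $(x_0,0)$ of radius $<\min\{x_0,1-x_0\}$, and letting $x_0\to 0$ rules out every $\lambda$.

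The same example defeats your Mayer--Vietoris variant. The connecting map lands in $\tilde H_0$ of the seam $\partial X\cap B$, which here consists of two arcs that join only at distance $\sim x_0$. Hence no uniform local homology condition holds on $DX$ either. Your fallback is only a heuristic. A pushforward-and-compactness construction of that kind gives no lower mass bound by itself; compare the Nagata case, where such bounds fail. So it would not yield $C^{-1}\Ha^n\le\norm{T}$.

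The missing idea is that $DX$ is linearly locally contractible \emph{around almost every point}, with constants depending on the point. If $X$ is $(\lambda,R)$-linearly locally contractible around an interior point $x$, then $DX$ is $(\lambda,\min\{R,d(x,\partial X)/3\lambda\})$-linearly locally contractible around both copies of $x$. The relevant balls avoid the seam and are isometric to balls in $X$ (Lemma~\ref{lemma: ae-llc-bdry-double-ae-llc}), and since $\Ha^n(\partial X)=0$ this covers almost every point. You then have to re-prove the closed case under this weaker hypothesis rather than quote \cite{BMW}.

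The paper does this in Lemma~\ref{lemma: existence-weak-llc}:
\begin{itemize}
\item It splits the rectifiable part into bi-Lipschitz pieces, each lying in a linearly locally contractible subset.
\item It extends each piece continuously (Lemma~\ref{lemma:bi-lip-ext-LLC}), so its local degree $\theta_i=\pm1$ is defined a.e.\ (Lemma~\ref{lemma: bi-lip-ext-llc-degree}), and sets $T=\sum_i\varphi_{i\#}\bb{\theta_i}$.
\item It proves $\partial T=0$ and property (a) by degree theory, using Theorem~\ref{thme: bate-perturb} to perturb away the purely unrectifiable part.
\item The localized density bound of Lemma~\ref{lemma: lower-bound-current} then gives $\textup{set}T=X$ up to a null set.
\end{itemize}
Since $|\theta_i|=1$, this yields $n^{-n/2}\Ha^n\le\norm{T}\le n^{n/2}\Ha^n$ with constants depending only on $n$, even though the contractibility constants vary from point to point. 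Once $\hat T$ is available this way, your restriction argument for (a) and your reflection argument for uniqueness are correct and agree with Lemma~\ref{lemma: fund-class-closed-gives-fund-class-bdry}.
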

        
        Here, $C$ depends only on $n$. If $X$ is closed, then $T$ is as in Definition \ref{def: metric fundamental class} and the second property holds for all $S\in \bI_n(X)$ with $\partial S= 0$. An integer rectifiable current $T \in \cI_n(X)$ in a metric manifold $X$ with non-empty boundary is said to be a metric fundamental class of $X$ if $T$ satisfies Definition \ref{def: metric fundamental class}(b) and Definition \ref{def: metric fundamental class}(a) for every Lipschitz map $\varphi\colon X \to M$ satisfying $\varphi(\partial X) \subset \partial (M)$.


\medskip


        Next, we consider non-orientable manifolds. As mentioned earlier, the definition of the metric fundamental class needs to be adapted for these manifolds.

        \begin{definition}\label{def: metric fundamental class non-oriented}
            Let $X$ be a metric space with finite Hausdorff $n$-measure that is homeomorphic to a closed, non-orientable, smooth $n$-manifold $M$. A metric fundamental class modulo $2$ of $X$ is an integer rectifiable current $T\in \cI_n(X)$ with $\partial T=0 \mod 2$ such that
            \begin{equation}\label{eq: def-metric-fundamental-class-modulo 2}
                \varphi_\# T = \deg(\varphi,\Z/2\Z) \cdot \bb{M} \textup{ for every Lipschitz map }\varphi \colon X \to M.
            \end{equation}
        \end{definition}

        If $X$ has boundary, then \eqref{eq: def-metric-fundamental-class-modulo 2} is only true for Lipschitz maps $\varphi\colon X \to M$ satisfying $\varphi(\partial X) \subset \partial M$. The manifold $M$ can be equipped with any Riemannian metric and $\bb{M}$ denotes the corresponding fundamental class modulo $2$. Unlike in the orientable case, we do not require the second property since we always have $\norm{T}_2\leq C\Ha^n$, where $C$ depends only on $n$. The degree $\deg(\varphi,\Z/2\Z)$ is defined in terms of singular homology groups with coefficients in $\Z/2\Z$ and is interpreted as either $0$ or $1$, depending on whether it is trivial or not.

\medskip
        
        The next result is the equivalent of Theorem \ref{thme: existence-orient-llc} for non-orientable manifolds.
    
        \begin{theorem}\label{thme: existence-non-orient-llc}
           Let $X$ be a compact, non-orientable metric $n$-manifold with finite Hausdorff $n$-measure and (possibly empty) boundary. Suppose that $X$ is linearly locally contractible and $\Ha^n(\partial X) = 0$. Then, $X$ has a metric fundamental class $T \in \cI_n(X)$ modulo $2$ satisfying 
            \begin{equation}\label{eq: mass-measure-hausdorff-existence-weak-llc-non-oriented}
                C^{-1} \Ha^n \leq \norm{T}_2 = \norm{T} \leq C\Ha^n,
            \end{equation}
            and whenever $S \in \bI_n(X)$ satisfies $\spt_2 (\partial S) \subset \partial X$, then $S = k \cdot T \mod 2$ for $k$ equal to either $1$ or $0$. 
        \end{theorem}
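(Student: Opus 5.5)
We construct $T$ by the same approximation scheme that gives Theorem \ref{thme: existence-orient-llc} (and \cite[Theorem 1.1]{BMW}), but run it modulo $2$ so that orientability is never used. Linear local contractibility yields, at every scale $\epsilon>0$, a finite simplicial complex $\Sigma_\epsilon$ (the nerve of a cover by balls of radius $\sim\epsilon$) together with maps $\psi_\epsilon\colon X\to\Sigma_\epsilon$ and $\phi_\epsilon\colon \Sigma_\epsilon\to X$. The map $\psi_\epsilon$ is Lipschitz, $\phi_\epsilon$ is Lipschitz on each simplex, and $\phi_\epsilon\circ\psi_\epsilon$ is $C\lambda\epsilon$-close to $\id_X$ and homotopic to it through a $C\lambda\epsilon$-controlled homotopy. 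Both maps are built inductively on skeleta using the contractions.

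The fundamental class $[X]\in H_n(X,\partial X;\Z/2\Z)$ pulls back to a simplicial mod $2$ chain $c_\epsilon=\sum_{\sigma\in A_\epsilon}\sigma$ in $\Sigma_\epsilon$, with $\partial c_\epsilon\equiv 0\pmod 2$ away from the part of $\Sigma_\epsilon$ near $\partial X$. Set $T_\epsilon:=\sum_{\sigma\in A_\epsilon}(\phi_\epsilon)_\#\bb{\sigma}$, where each simplex carries an arbitrary orientation. The choice of orientation changes $T_\epsilon$ only by multiples of $2$, so $T_\epsilon$ is well defined mod $2$. It satisfies
\[
\cF_2(\partial T_\epsilon)\lesssim \epsilon\,\Ha^n(X)\ \text{(off $\partial X$)} \quad\text{and}\quad \bM(T_\epsilon)\le C\,\Ha^n(X).
\]
The mass bound comes from the usual estimate: each $n$-simplex of $\Sigma_\epsilon$ is mapped into a ball of radius $C\lambda\epsilon$, and the number of simplices is controlled by packing with respect to $\Ha^n$. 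By the compactness theorem for mod $2$ flat chains (Ambrosio--Katz, Ambrosio--Wenger), a subsequence converges in $\cF_2$ to a rectifiable current mod $2$. Its boundary vanishes mod $2$ off $\partial X$, and since $\Ha^n(\partial X)=0$ there is no mass on $\partial X$. Choosing representatives with multiplicity in $\{0,1\}$ gives $T\in\cI_n(X)$ with $\norm{T}=\norm{T}_2\le C\Ha^n$. The localized version of the same construction, applied to $T\mres B(x,r)$, gives a density estimate $\norm{T}(B(x,r))\le Cr^n$, and from that the upper bound in \eqref{eq: mass-measure-hausdorff-existence-weak-llc-non-oriented}.

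The degree identity \eqref{eq: def-metric-fundamental-class-modulo 2} follows from stability of push-forward under $\cF_2$-convergence, together with the fact that $\varphi_\# T_\epsilon$ represents $\varphi_*[X]=\deg(\varphi,\Z/2\Z)[M]$ up to a homotopy whose $\cF_2$-size is $O(\epsilon)$. In the top dimension of $M$, a mod $2$ cycle is determined by its homology class, so $\varphi_\# T_\epsilon\to\deg(\varphi,\Z/2\Z)\bb{M}$. The boundary case is handled in the same way, using the condition $\varphi(\partial X)\subset\partial M$.

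\textbf{Main obstacle: the lower bound $C^{-1}\Ha^n\le\norm{T}_2$.} I would argue as in BMW, locally and by contradiction. Fix $x\in X\setminus\partial X$ and a small $r$. Using the contraction of $B(x,r)$ inside $B(x,\lambda r)$, build a Lipschitz map $\varphi\colon X\to S^n$ (or into $\R P^n$, composed with a local chart) that collapses the complement of $B(x,r)$ to a point and has mod $2$ local degree $1$ at $x$. Local degree mod $2$ makes sense even when $X$ is non-orientable, because $H_n(X,X\setminus\{x\};\Z/2\Z)=\Z/2\Z$. Then $\varphi_\#(T\mres B(x,r))=\bb{S^n}\pmod 2$. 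This gives $\norm{T}_2(B(x,\lambda r))\ge c\,\Lip(\varphi)^{-n}$, with $\Lip(\varphi)\lesssim 1/r$. The delicate point is controlling $\Lip(\varphi)$ linearly in $1/r$ using only linear local contractibility. This requires the skeleton-wise extension argument at scale $r$, which should transfer verbatim from the orientable case.

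Uniqueness modulo $2$ works the same way. Suppose $S\in\bI_n(X)$ with $\spt_2(\partial S)\subset\partial X$. Then $S$ mod $2$ defines a class in $H_n(X,\partial X;\Z/2\Z)=\Z/2\Z$, so $S-kT$ is a mod $2$ relative boundary for $k\in\{0,1\}$. Since there are no $(n+1)$-dimensional chains in an $n$-manifold (the $\Ha^{n+1}$-measure is zero), this forces $S=kT\pmod 2$.
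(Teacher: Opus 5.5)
Your proposal has genuine gaps. The most serious one is the construction of $T_\epsilon$.

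\textbf{The approximating currents.} Linear local contractibility only provides \emph{continuous} contractions of $B(x,r)$ inside $B(x,\lambda r)$, with no modulus of continuity. So the skeleton-wise extension (as in Proposition \ref{prop:eps-cont-extension}) yields a continuous $\phi_\epsilon\colon\Sigma_\epsilon\to X$ with diameter control, not a map that is Lipschitz on simplices. Since $X$ is not known to be a Lipschitz neighborhood retract, you cannot approximate $\phi_\epsilon$ by Lipschitz maps into $X$ either. Hence $(\phi_\epsilon)_\#\bb{\sigma}$ is not defined. Moreover, knowing that each simplex lands in a ball of radius $C\lambda\epsilon$ gives no mass bound: mass is controlled by Lipschitz constants, not by the diameter of the image.

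If you map into $E(X)$ instead, Lipschitz extension over simplices is free, but then you are running the paper's Nagata argument, and that scheme only ever yields the upper bound. Even there, bounding the number of $n$-simplices of $c_\epsilon$ by $C\Ha^n(X)\epsilon^{-n}$ needs two things. First, covers at scale $\epsilon$ with uniformly bounded multiplicity. Second, a Federer--Fleming projection onto the $n$-skeleton. Together these are Theorem \ref{thme: nagata-factorization}, which requires finite Nagata dimension and is not implied by linear local contractibility. Your ``packing'' count also presupposes $\Ha^n(B(x,\epsilon))\gtrsim\epsilon^n$, which is essentially the lower bound you have not yet proved.

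\textbf{The lower bound.} The same defect appears here. A Lipschitz map $X\to S^n$ with $\Lip\varphi\lesssim 1/r$ and mod-$2$ degree one over $B(x,r)$ is not produced by a skeleton-wise extension, which again yields only continuous maps. The orientable proof never constructs such a map. Instead, Lemma \ref{lemma: lower-bound-current} argues by contradiction:
\begin{itemize}
\item it takes a minimal filling $V$ in $E(X)$ of the slice $\partial(T\mres B(x,r))$;
\item it uses the density bound for minimal fillings to localize $\spt V$;
\item it builds a continuous retraction of $X\cup\spt V$ onto $X$ from an $\epsilon$-continuous one, where only continuity is needed;
\item it concludes with the constancy theorem in $M$.
\end{itemize}

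\textbf{Uniqueness.} Your statement ``$S$ mod $2$ defines a class in $H_n(X,\partial X;\Z/2\Z)$'' has no meaning until you build a bridge between metric currents and singular homology. Lemma \ref{lemma: uniqueness-mod-2} builds it: it produces mod-$2$ fillings of $S-kT$ in $N_\epsilon(X)\subset l^\infty$ via Lipschitz maps to $M$ and back, projects to finite-dimensional subspaces, and deforms onto an $n$-skeleton.

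\textbf{The missing idea: reduce to the orientable case.} The paper needs no new construction here.
\begin{itemize}
\item The orientable double cover $\tilde X$ is locally isometric to $X$, hence (almost everywhere) linearly locally contractible by Lemma \ref{lemma: ae-llc-orientable-double-ae-lcc}.
\item Theorem \ref{thme: existence-orient-llc} then gives $\tilde T$ with multiplicities $\pm1$ and $C^{-1}\Ha^n\le\norm{\tilde T}\le C\Ha^n$. The lower bound comes from Lemma \ref{lemma: lower-bound-current} applied on $\tilde X$.
\item Restrict $\tilde T$ to one sheet over each piece of a fine partition of $X$ and push it down by the covering map. The result $T$ has multiplicities $\pm1$, so $\norm{T}_2=\norm{T}$, and both bounds are inherited at no cost.
\item Multiplicities $\pm1$ can be re-signed freely modulo $2$. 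Using this, $\partial T=0 \bmod 2$ and $\varphi_\#T=\deg(\varphi,\Z/2\Z)\bb{M} \bmod 2$ follow from parity of preimage counts (Corollary \ref{cor: number-preimage-even}) together with Lemma \ref{lemma: mod-2-pushfwd-0-cycle}.
\item The boundary case is handled by doubling $X$ along $\partial X$. The double is only almost everywhere linearly locally contractible (Lemma \ref{lemma: ae-llc-bdry-double-ae-llc}), which is why the paper works with that weaker notion throughout.
\end{itemize}
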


        Here, $C$ depends only on $n$, and $\norm{T}_2$ is the $2$-mass measure of $T$. The $2$-support $\spt_2 S$ of an integral current $S \in \bI_n(X)$ is defined as the support of its $2$-mass measure $\norm{S}_2$. If $X$ is closed, then the uniqueness property holds for all $S\in \cI_n(X)$ with $\partial S = 0 \mod 2$.
        
        \medskip
        
        We note that we prove Theorem \ref{thme: existence-orient-llc} and Theorem \ref{thme: existence-non-orient-llc} using a weaker version of linear local contractibility, as in the standard definition explained above. Concretely, both results (and their corollary below) hold for metric manifolds that are almost everywhere linearly locally contractible; see Section \ref{sec: llc}. As the name indicates, this can be thought of as a measure theoretic version of the standard definition. We obtain the following corollary. 
        
        \begin{corollary}\label{cor: metric-mfld-rect}
            Let $X$ be a compact metric $n$-manifold with finite Hausdorff $n$-measure and (possibly empty) boundary. Suppose that $X$ is linearly locally contractible and satisfies $\Ha^n(\partial X)=0$. Then $X$ is $n$-rectifiable.
        \end{corollary}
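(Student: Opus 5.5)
The corollary follows from Theorems \ref{thme: existence-orient-llc} and \ref{thme: existence-non-orient-llc}. The plan is to split according to whether $X$ is orientable, obtain a metric fundamental class $T \in \cI_n(X)$ in either case, and then transfer rectifiability from the mass measure $\norm{T}$ to $\Ha^n$ using the lower bound $C^{-1}\Ha^n \leq \norm{T}$.

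\emph{Step 1.} If $X$ is orientable, Theorem \ref{thme: existence-orient-llc} gives $T$ satisfying \eqref{eq: mass-measure-hausdorff-existence-weak-llc}. If $X$ is non-orientable, Theorem \ref{thme: existence-non-orient-llc} gives $T$ satisfying \eqref{eq: mass-measure-hausdorff-existence-weak-llc-non-oriented}, and there we also have $\norm{T}_2=\norm{T}$. In both cases $T\in\cI_n(X)$ and
\[
C^{-1}\Ha^n \leq \norm{T} \leq C\Ha^n
\]
as measures on $X$.

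\emph{Step 2.} By the Ambrosio--Kirchheim structure theory for integer rectifiable currents (Section \ref{sec: currents}), $\norm{T}$ is concentrated on a countably $n$-rectifiable set $S_T$. Moreover, $\norm{T} = \theta\,\lambda\,\Ha^n\mres S_T$ with an integer multiplicity $\theta\geq1$ and a positive, bounded area factor $\lambda$. Concretely, $S_T$ is covered up to an $\Ha^n$-null set by countably many images $\varphi_i(K_i)$ of compact sets $K_i\subset\R^n$ under Lipschitz maps $\varphi_i$.

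\emph{Step 3.} Since $\norm{T}(X\setminus S_T)=0$, the lower bound gives
\[
\Ha^n(X\setminus S_T)\leq C\,\norm{T}(X\setminus S_T)=0 .
\]
Hence $X$ is covered, up to an $\Ha^n$-null set, by countably many Lipschitz images of subsets of $\R^n$. As $\Ha^n(X)<\infty$, this is exactly $n$-rectifiability of $X$.

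There is no genuine obstacle here. The only point needing care is that the lower bound $C^{-1}\Ha^n\le\norm{T}$ holds as an inequality of Borel measures on all of $X$, which is how the theorems are stated. The content lies entirely in those theorems, and in particular in the lower mass bound, which is where linear local contractibility is used.
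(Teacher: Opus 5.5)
For orientable $X$ your argument is the paper's. Theorem \ref{thme: existence-orient-llc} gives $T\in\cI_n(X)$ with $C^{-1}\Ha^n\le\norm{T}$, so $\Ha^n$ vanishes off the rectifiable set carrying $\norm{T}$; the paper phrases this as $\textup{set}\,T=X$. The paper first passes to the closed double $\hat X$, but quoting the boundary version of the theorem directly is fine.

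The gap is the non-orientable case: there your Step 1 is circular. In the paper, Theorem \ref{thme: existence-non-orient-llc} is proved via Proposition \ref{prop: existence-fund-class-nonorientable}. In that proof, the step showing $\partial T=0 \mod 2$ (and likewise the degree identity for Lipschitz $\psi\colon X\to M$) invokes Corollary \ref{cor: number-preimage-even}. That corollary assumes $X$ is $n$-rectifiable, and this hypothesis is supplied by Corollary \ref{cor: metric-mfld-rect} itself. This is why the corollary is proved at the end of Section \ref{sec: orientable-mflds} using only orientable results.

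The missing idea is to reduce the non-orientable case to the orientable one directly, without the mod-$2$ theory.
\begin{enumerate}
\item If $\partial X\neq\emptyset$, first pass to the glued closed manifold $\hat X$. It contains $X$ isometrically and, since $\Ha^n(\partial X)=0$, it is almost everywhere linearly locally contractible by Lemma \ref{lemma: ae-llc-bdry-double-ae-llc}. It need not be linearly locally contractible, so you need the almost-everywhere version of the orientable theorem, which the paper proves.
\item Take the orientable double cover $\tilde X$ of $\hat X$. It is closed, orientable, and almost everywhere linearly locally contractible by Lemma \ref{lemma: ae-llc-orientable-double-ae-lcc}. The orientable case (Lemmas \ref{lemma: existence-weak-llc} and \ref{lemma: lower-bound-current}) then makes $\tilde X$ $n$-rectifiable.
\item The covering map is a local isometry and $\tilde X$ is compact. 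Hence $\hat X$ is a finite union of isometric images of pieces of $\tilde X$, so it is $n$-rectifiable, and so is $X\subset\hat X$.
\end{enumerate}
Alternatively, you could note that in Proposition \ref{prop: existence-fund-class-nonorientable} the lower bound $C^{-1}\Ha^n\le\norm{T}$ is obtained from $\tilde T$ before rectifiability is ever used. But that is the same double-cover argument, and it must be made explicitly rather than by citing Theorem \ref{thme: existence-non-orient-llc}. Your Steps 2 and 3 are correct once such a $T$ is available without circularity.
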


        This extends the rectifiability result in \cite{BMW} and is a direct consequence of the mass measure bounds of the fundamental class in Theorem \ref{thme: existence-orient-llc} and Theorem \ref{thme: existence-non-orient-llc}. For non-orientable manifolds, we are also able to prove the existence of a metric fundamental class modulo $2$ when the space is not linearly locally contractible.

        \begin{theorem}\label{thme: existence-non-orient-nagata}
            Let $X$ be a compact, non-orientable metric $n$-manifold with finite Hausdorff $n$-measure and (possibly empty) boundary. Suppose that $X$ has finite Nagata dimension and $\Ha^n(\partial X) = 0$. Then, $X$ has a metric fundamental class $T \in \cI_n(X)$ modulo $2$ satisfying $ \norm{T}_2 = \norm{T} \leq C\Ha^n$ and whenever $S \in \bI_n(X)$ satisfies $\spt_2 (\partial S) \subset \partial X$, then $S = k \cdot T \mod 2$ for $k$ equal to either $1$ or $0$. 
        \end{theorem}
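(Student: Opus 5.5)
We follow the strategy of \cite{BMW} for finite Nagata dimension, adapted to $\Z/2\Z$ coefficients. Fix a homeomorphism $\psi\colon X\to M$ onto a compact non-orientable smooth manifold. The idea is to approximate $\psi$ by Lipschitz maps into simplicial complexes, pull back a mod $2$ fundamental cycle combinatorially, and pass to a limit.

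\emph{Step 1 (discretization).} Finite Nagata dimension gives, for every $r>0$, a covering of $X$ by sets of diameter $\lesssim r$ with multiplicity bounded by a constant $m$ independent of $r$. Let $f_r\colon X\to N_r$ be the associated Lipschitz partition-of-unity map into the nerve $N_r$, whose dimension is at most $m$ and whose simplices have side length $\sim r$. For small $r$, uniform continuity of $\psi$ and $\psi^{-1}$ lets us construct a continuous map $g_r\colon N_r\to M$ with $g_r\circ f_r$ uniformly close to, hence homotopic to, $\psi$ (relative to the boundary up to small error). A crucial feature of the Nagata setting is that the number of $n$-simplices of $N_r$ meeting the image is controlled by $r^{-n}\Ha^n$. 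To get this, cover $X$ efficiently: by finite $\Ha^n$-measure, for small $r$ there is a covering with $\sum \diam^n\lesssim\Ha^n(X)$, which we refine using the Nagata covering.

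\emph{Step 2 (mod $2$ cycle and pushforward).} Choose a simplicial mod $2$ $n$-chain $Z_r$ in $N_r$ representing the pullback of the fundamental class. Concretely, take the $\Z/2$ cellular cycle dual to $g_r^*[M]$, supported on the simplices meeting $f_r(X)$. Relative to $\partial$, this is a cycle modulo $2$ away from the image of $\partial X$. Next use the Lipschitz extension property available in finite Nagata dimension (Lang--Schlichenmaier) to get a retraction-type Lipschitz map $\rho_r\colon N_r^{(n)}\to X$ with $\rho_r\circ f_r$ at distance $\lesssim r$ from the identity and $\Lip(\rho_r|_\sigma)\lesssim 1$ on each simplex of size $r$. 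Set $T_r=\rho_{r\#}Z_r$. Then $\bM(T_r)\lesssim \#\{\text{simplices}\}\cdot r^n\le C\Ha^n(X)$, and more locally $\norm{T_r}(B)\lesssim \Ha^n_\infty$ of a neighbourhood of $B$. Moreover $\partial T_r=0\mod 2$ up to terms concentrated near $\partial X$ whose mass tends to $0$, since $\Ha^n(\partial X)=0$.

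\emph{Step 3 (compactness and identification).} Apply the compactness theorem for integral currents modulo $2$ (\cite{Ambrosio-Katz, Ambrosio-Wenger}) to obtain a limit $T$ with $\partial T=0\mod 2$ and $\norm{T}_2\le C\Ha^n$. Choosing the representative $T$ with multiplicity in $\{0,1\}$ gives $\norm{T}_2=\norm{T}$. For the degree identity, take any Lipschitz $\varphi\colon X\to M$ with $\varphi(\partial X)\subset\partial M$. Then $\varphi_\#T_r=(\varphi\circ\rho_r)_\#Z_r$, and $\varphi\circ\rho_r\circ f_r$ is close to $\varphi$. By the constancy theorem mod $2$ on $M$, $\varphi_\#T=k\bb{M}$, and $k$ equals the $\Z/2$ degree of $\varphi$, computed through $Z_r$ representing $\psi^*[M]$. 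Uniqueness follows similarly. Given $S$ with $\spt_2\partial S\subset\partial X$, push forward along $\psi$ (made Lipschitz by approximation), or argue via the constancy theorem mod $2$ on $X$ using the induced isomorphism on $H_n(\,\cdot\,;\Z/2)$, which is $\Z/2$; by \cite[Prop.~5.5]{BMW}-type arguments the mod $2$ homology via currents injects.

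\emph{Main obstacle.} The hardest parts are Step 1's mass control and the nontriviality of $T$. The mass bound needs the count of simplices to be linked to $\Ha^n$ rather than only to covering numbers, which requires a careful choice of the Nagata covering at scale $r$ adapted to a near-optimal Hausdorff cover. Nontriviality, i.e.\ $\varphi_\#T\neq0$ for $\varphi=\psi$ smoothed, must survive the limit. Here I would use that flat convergence mod $2$ commutes with Lipschitz pushforward, and that $\varphi_\# T_r$ is a mod $2$ cycle of degree one on $M$, hence equal to $\bb{M}$ mod $2$ for every $r$, so the limit is $\bb{M}$.
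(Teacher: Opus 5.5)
Your plan has a genuine gap at its core: the maps $\rho_r$ cannot in general take values in $X$. The Lang--Schlichenmaier extension theorem needs a Lipschitz $(n-1)$-connected target, and nothing of that kind is assumed about $X$. The properties you require would in fact force rectifiability. Suppose $\rho_r\colon N_r^{(n)}\to X$ is Lipschitz and $\rho_r\circ f_r$ is defined and uniformly close to $\mathrm{id}_X$, as your degree step needs. Then $\psi\circ\rho_r\circ f_r$ is uniformly close to the homeomorphism $\psi$, hence has $\Z/2\Z$-degree one and is onto. So $X=\rho_r(N_r^{(n)})$ would be $n$-rectifiable, which the hypotheses do not guarantee (see the remark after the statement, based on the Sormani--Wenger sphere). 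This is why the paper maps the complex into the injective hull $E(X)$ (Theorem~\ref{thme: nagata-factorization}), so the approximating cycles live in $E(X)$. The mod $2$ compactness theorem (Theorem~\ref{thme: compactness-flat-mod-p}) then yields only a flat chain $S$ mod $2$ in $E(X)$ with $\norm{S}_2$ carried by $X$. A separate rectifiability result, Proposition~\ref{prop: rectifiability-flat-current}, resting on Bate's Theorem~\ref{thme: bate-perturb}, is needed to get $T\in\cI_n(X)$ with $S=T \mod 2$; your ``choose the representative with multiplicity in $\{0,1\}$'' takes this for granted. Likewise, testing the limit against a Lipschitz $\varphi\colon X\to M$ requires first extending $\varphi$ to a neighbourhood of $X$ in $E(X)$, using that $M$ is an absolute Lipschitz neighbourhood retract.

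Two further steps fail as written. First, the mass bound cannot come from counting simplices of a nerve at the single scale $r$. Such a cover has at least as many members as the $r$-covering number of $X$, and finite $\Ha^n(X)$ does not bound $r^n$ times that number, since the Minkowski content can be infinite. Splicing in a near-optimal Hausdorff cover destroys the uniform scale your per-simplex Lipschitz bound depends on, and the nerve has dimension up to the Nagata dimension, not $n$. The missing ingredient is the Federer--Fleming type deformation inside Theorem~\ref{thme: nagata-factorization}. The nerve map is $C$-Lipschitz, so its image has $\Ha^n\le C^n\Ha^n(X)$. Radial projections with averaged centres push this image into the $n$-skeleton without losing the bound, and projecting from uncovered interior points empties all partially covered $n$-simplices. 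This leaves an $n$-dimensional complex $\Sigma$ with $\Ha^n(\Sigma)\le C\Ha^n(X)$. Let $\eta$ be a Lipschitz approximation $\xi\colon M\to\Sigma$ of $M\to X\to\Sigma$, composed with $\Sigma\to E(X)$. Pushing $\bb{M}$ itself forward by $\eta$ gives $\bM_2(\eta_\#\bb{M})\le c_n\Ha^n(\eta(M))\le c_nC^n\Ha^n(\Sigma)$ regardless of $\Lip(\xi)$, because reduction mod $2$ caps multiplicities (Lemma~\ref{lemma: existence-non-orient-nagata-sequence}). It also makes the degree identity immediate, with no simplicial cycle $Z_r$ needed.

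Second, the boundary case. $\partial T_r$ discretizes $\partial X$, and its mass reflects the $(n-1)$-dimensional size of $\partial X$, which may be infinite because only $\Ha^n(\partial X)=0$ is assumed. So these terms do not tend to zero, and the hypothesis $\sup_r\bM_2(\partial T_r)<\infty$ of the compactness theorem fails. In any case, a fundamental class of a manifold with nonempty boundary cannot satisfy $\partial T=0\mod 2$, since its pushforwards are $\bb{M}$ and $\partial\bb{M}\neq 0$. The paper instead doubles $X$ along $\partial X$, proves the closed case for $\hat{X}$ (still of finite Nagata dimension), and restricts to one copy via Lemma~\ref{lemma: fund-class-closed-gives-fund-class-bdry}. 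Uniqueness in the closed case is Lemma~\ref{lemma: uniqueness-mod-2}, which your ``Prop.~5.5-type'' remark only gestures at.
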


        Here, $C$ depends only on $n$. We note that a metric manifold with finite Nagata dimension does not need to be rectifiable. Indeed, by \cite[Theorem A.1]{Sormani-Wenger-calcvar-2010} there exists a geodesic metric space $Y$ with finite Hausdorff $2$-measure that is homeomorphic to the $2$-sphere but is not $2$-rectifiable. It follows from \cite{Jorgensen-Lang-22} that any geodesic metric surface, and in particular $Y$, has Nagata dimension $2$. Therefore, in general, an integer rectifiable current as in the previous result cannot satisfy a lower bound of the form \eqref{eq: mass-measure-hausdorff-existence-weak-llc-non-oriented}. Finally, for metric $2$-manifolds, called metric surfaces, we do not require any conditions. 
    
        \begin{theorem}\label{thme: existence-non-orient-surface}
            Let $X$ be a compact, non-orientable metric surface with finite Hausdorff $2$-measure and (possibly empty) boundary. Suppose that $\Ha^2(\partial X)= 0$. Then, $X$ has a metric fundamental class $T \in \cI_2(X)$ modulo $2$ satisfying $\norm{T}_2\leq 2\Ha^2$ and whenever $S \in \bI_2(X)$ satisfies $\spt_2 (\partial S) \subset \partial X$, then $S = k \cdot T \mod 2$ for $k$ equal to either $1$ or $0$. 
        \end{theorem}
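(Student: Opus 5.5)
The plan follows the scheme that presumably underlies Theorem \ref{thme: existence-non-orient-nagata}: build the current from a sequence of Lipschitz maps into a fine triangulation. The extra input for surfaces is that every metric surface is well approximated by Lipschitz maps into $\R^2$-polyhedral models, with area distortion controlled by $\Ha^2$.

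\emph{Step 1: approximation.} Fix a smooth triangulation of $M$ and a homeomorphism $X\to M$. For each $k$ choose a finite cover of $X$ by sets of diameter $<1/k$ with controlled overlap. Since $\dim X=2$, a standard Vitali-type or nerve construction gives multiplicity at most $3$. Using a Lipschitz partition of unity, I obtain Lipschitz maps $\psi_k\colon X\to P_k$ into $2$-dimensional simplicial complexes $P_k$, with $\psi_k$ close to the homeomorphism after composing with a map $P_k\to M$. The key quantitative requirement is a co-area or length-area estimate: $\int \#(\psi_k^{-1}(y)\cap\cdot)\,dy \leq 2\Ha^2$. For surfaces one can instead use the known fact that $X$ admits $1/k$-dense Lipschitz "cellular" maps $X\to M$ homotopic to the homeomorphism with $\leb^2$-Jacobian bounded by $2\Ha^2$, via Eilenberg's inequality with constant $4/\pi\le2$. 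Concretely, I take $\varphi_k\colon X\to M$ Lipschitz, $1/k$-close to the homeomorphism, with $\varphi_k(\partial X)\subset\partial M$.

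\emph{Step 2: define $T$.} Pull back $\bb{M}$ mod $2$ through $\varphi_k$. This gives the $2$-rectifiable set $\{x:\varphi_k^{-1}(\varphi_k(x))$ has odd cardinality$\}$ with multiplicity $1$ and the orientation induced by $\varphi_k$, i.e.\ a current $T_k\in\cI_2(X)$ with $\varphi_{k\#}T_k=\bb M \bmod 2$. By Eilenberg's inequality, $\norm{T_k}\le 2\Ha^2$. The boundary of $T_k$ mod $2$ is carried by $\partial X$, which is $\Ha^2$-null, together with the preimage of the $1$-skeleton. Sending the triangulation mesh to $0$ and using compactness for flat chains mod $2$ (Ambrosio–Katz / Ambrosio–Wenger, Section \ref{sec: flat-currents}), I extract a limit $T$ with $\norm{T}_2\le 2\Ha^2$ and $\partial T=0\bmod 2$ away from $\partial X$.

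\emph{Step 3: degree identity and uniqueness.} For a Lipschitz $\varphi\colon X\to M$ with $\varphi(\partial X)\subset\partial M$, the map $\varphi$ is homotopic to $\varphi_k$ composed with a map of degree $\deg(\varphi,\Z/2\Z)$. A constancy theorem mod $2$ on $M$ gives $\varphi_\#T=c\bb M$, with $c$ detected by the homological degree through a Lipschitz homotopy argument. Uniqueness goes as follows. If $S\in\bI_2(X)$ with $\spt_2\partial S\subset\partial X$, push forward under the $\varphi_k$. Since $\varphi_{k\#}S$ is a constant multiple of $\bb M$ mod $2$, and $\norm{S}_2$ is absolutely continuous with respect to $\Ha^2$ on the rectifiable part where the $\varphi_k$ become injective in the limit, $S-jT$ has vanishing $2$-mass.

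The main obstacle is Step 1 together with the limit in Step 2. The nontrivial point is controlling the mass of the limit while ensuring it is nonzero mod $2$ without any contractibility hypothesis. I would rely on the degree to give nontriviality: the pushforward is $\bb M\ne0$, so the limit is nonzero.
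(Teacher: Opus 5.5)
There is a genuine gap, and it comes from the direction of your maps. In Steps 1--2 you build Lipschitz maps $\varphi_k\colon X\to M$ and try to ``pull back'' $\bb{M}$ to a multiplicity-one current on the set of points whose $\varphi_k$-fibre has odd cardinality. The theorem assumes nothing beyond finite $\Ha^2$, and a metric surface need not be $2$-rectifiable (the paper recalls the Sormani--Wenger sphere). In that case the odd-fibre set is not rectifiable, there is no orientation ``induced by $\varphi_k$'', and $T_k\in\cI_2(X)$ is not defined at all.

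Even on a rectifiable $X$, the identities $\varphi_{k\#}T_k=\bb{M}\bmod 2$ and $\partial T_k=0\bmod 2$ are exactly what the paper derives from (a.e.) linear local contractibility. It does so through local degrees $\pm1$ on bi-Lipschitz pieces (Lemma~\ref{lemma: bi-lip-ext-llc-degree}, Corollary~\ref{cor: number-preimage-even}, Lemma~\ref{lemma: mod-2-pushfwd-0-cycle}). Without that hypothesis nothing ties the parity of fibres to $\deg(\varphi_k,\Z/2\Z)$. Your claim that the mod-$2$ boundary lives on $\partial X$ and the preimage of the $1$-skeleton therefore has no basis.

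Nontriviality also does not follow as you suggest. Since $\Lip(\varphi_k)\to\infty$, the identity $\varphi_{k\#}T_k=\bb{M}$ says nothing in the limit about $\varphi_\#T$ for a fixed $\varphi$, which is what Step~3 needs. Finally, nerve maps as in Step~1 with controlled Lipschitz constants and controlled image area require uniform control across scales, essentially finite Nagata dimension. That is how Theorem~\ref{thme: existence-non-orient-nagata} is proved, and a general metric surface need not have it.

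The missing idea is to push forward from the smooth model rather than pull back to $X$. The paper first reduces to closed $X$ via the doubling of Lemma~\ref{lemma: fund-class-closed-gives-fund-class-bdry}. It then invokes the Ntalampekos--Romney uniformization theorem: there is a continuous, monotone, weakly conformal $\psi\in W^{1,2}(M,X)$.

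A Lusin-type decomposition gives Borel sets $A_k$ with $\psi|_{A_k}$ $2k$-Lipschitz and $\Ha^2(M\setminus A_k)\le\epsilon_k/k^2$. Extending into the injective hull yields $2k$-Lipschitz maps $\psi_k\colon M\to E(X)$ converging uniformly to $\psi$. The currents $T_k=\psi_{k\#}\bb{M}$ are mod-$2$ cycles with $\bM_2(T_k)\le\textup{vol}^*(\psi|_{A_k})+4\epsilon_k$, which is uniformly bounded.

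Theorem~\ref{thme: compactness-flat-mod-p} and Proposition~\ref{prop: rectifiability-flat-current} then produce $T\in\cI_2(X)$. A reduction modulo $2$ gives $\norm{T}_2=\norm{T}\le 2\Ha^2$; the $2$ is the area factor $n^{n/2}$, not Eilenberg's constant. The degree identity follows from $\varphi_\#T_k=(\varphi\circ\psi_k)_\#\bb{M}=\deg(\varphi\circ\psi_k,\Z/2\Z)\,\bb{M}$, together with Lemma~\ref{lemma: lip-retract-homotopies} and $\deg(\psi,\Z/2\Z)=1$.

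Your uniqueness sketch is also not an argument, since nothing makes the $\varphi_k$ ``become injective in the limit''. The paper instead uses Lemma~\ref{lemma: uniqueness-mod-2}. It builds mod-$2$ fillings of $S-kT$ in small neighbourhoods of $X\subset l^\infty$ via a straight-line homotopy, projects to finite-dimensional subspaces, and kills the fillings with a deformation argument.
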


        For closed, orientable surfaces, the result was already known \cite[Theorem 1.3]{BMW}.

\medskip

        We define the (lower) \textit{Minkowski content} of a subset $E\subset X$ with respect to an open subset $U\subset X$ by 
        $$
        \mathscr{M}_-(E\, |\ U)= \liminf_{r\searrow 0}\,\frac{\Ha^n(E_r\cap U) - \Ha^n(E\cap U)}{r},
        $$ 
        where $E_r=\{x\in X: d(x,E)<r\}$ denotes the open $r$-neighborhood of $E$ in $X$. For an open ball $B=B(x,r)$ and $\lambda>0$, we write $\lambda B$ for the open ball $B(x,\lambda r)$ with the same center $x$ and radius $\lambda r$. 

        \begin{theorem}\label{thme: poincare}
            Let \(X\) be a closed, non-orientable metric manifold of dimension \(n\geq 2\). Suppose that \(X\) is linearly locally contractible and Ahlfors \(n\)-regular. Then there exist \(C\), \(\lambda \geq 1\) such that 
            \begin{equation}\label{eq: rel-isop-intro}
                \min\bigl\{\Ha^n\big(E\cap B\big), \Ha^n\big(B \setminus E\big) \bigr\} \leq C \cdot\mathscr{M}_-\big( E\, | \, \lambda B\big)^{\,\tfrac{n}{n-1}}
            \end{equation}
            for every Borel subset \(E\subset X\) and every open ball \(B\subset X\).
        \end{theorem}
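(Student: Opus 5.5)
Theorem \ref{thme: poincare} is a relative isoperimetric inequality, and I would prove it the way \cite{BMW} reproves Semmes' theorem, with the integral fundamental class replaced by the fundamental class modulo $2$ from Theorem \ref{thme: existence-non-orient-llc}.

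\textbf{Step 1 (the fundamental class).} Let $T\in\cI_n(X)$ be the metric fundamental class modulo $2$ given by Theorem \ref{thme: existence-non-orient-llc}. Since $X$ is closed, $\partial T=0 \mod 2$. By Ahlfors regularity, $\norm{T}_2$ is comparable to $\Ha^n$, which in turn is comparable to $r^n$ on balls.

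\textbf{Step 2 (slicing).} Fix a Borel set $E$ and a ball $B=B(x,r)$. Set $f(y)=d(y,E)$ and consider the restrictions $T\mres(E_s\cap\lambda B)$. By the coarea (slicing) inequality for $f$ modulo $2$, for a set of $s\in(0,\rho)$ of positive measure the slice $\langle T,f,s\rangle$ restricted to $\lambda B$ has $2$-mass at most
$$
\frac{2}{\rho}\,\norm{T}_2\bigl((E_\rho\setminus E)\cap\lambda B\bigr)\lesssim \frac{1}{\rho}\bigl(\Ha^n(E_\rho\cap\lambda B)-\Ha^n(E\cap\lambda B)\bigr).
$$
Letting $\rho\to 0$ along a sequence realizing the $\liminf$, this is at most a constant times $\mathscr{M}_-(E\,|\,\lambda B)$. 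One also has to localize to the ball, for instance by additionally slicing with $d(x,\cdot)$ at a level $t\in(r,2r)$. This produces a current $S=T\mres(E_s\cap B(x,t))$ whose boundary modulo $2$ splits as $\partial S = U+ V \mod 2$. Here $U$ lies inside $\lambda B$ and has mass controlled by the Minkowski content, while $V$ is the spherical piece.

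\textbf{Step 3 (isoperimetric/filling step, the key step).} This is where linear local contractibility enters. In \cite{BMW} this step is a local cycle-filling argument combined with the uniqueness statement: an $n$-chain with boundary supported in a small ball and small mass must agree, modulo the fundamental class, with a filling of small mass. I would establish the mod $2$ analogue: for a $2$-cycle $Z\in\cI_{n-1}(X)$ with $\spt Z\subset B(x,t)$ and $\bM(Z)$ small compared to $t^{n-1}$, there is a filling $W$ with $\spt W\subset\lambda B$ and
$$
\bM(W)\le C\,\bM(Z)^{n/(n-1)} \mod 2 .
$$
To obtain this, use linear local contractibility together with Lipschitz extension (Ahlfors regularity gives the covering/Nagata-type control needed) to push $Z$ into an $(n-1)$-skeleton and cone off. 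Then apply the uniqueness part of Theorem \ref{thme: existence-non-orient-llc}, in a relative version, to the $n$-chain $S-W$: its mod $2$ boundary lives only near $\partial(\lambda B)$, so locally $S-W=k\,T \mod 2$ with $k\in\{0,1\}$. Hence, depending on $k$, either $\norm{T}_2(E\cap B)$ or $\norm{T}_2(B\setminus E)$ is bounded by $\bM(W)$.

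\textbf{Step 4 (conclusion).} If $\mathscr{M}_-(E\,|\,\lambda B)\gtrsim r^{n-1}$, the inequality is trivial by Ahlfors regularity. Otherwise Step 3 applies, and combining it with the comparability $\norm{T}_2\approx\Ha^n$ from Step 1 gives \eqref{eq: rel-isop-intro}.

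The main obstacle is Step 3: we need the local uniqueness/filling statement modulo $2$, not just the global one, and we need to handle the spherical boundary piece $V$. My first attempt would be to choose $t$ by slicing so that $\bM(V)$ is also controlled, and to absorb $V$ into the cone construction.
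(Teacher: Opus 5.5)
\textbf{There is a genuine gap, and it is your Step 3.} That step carries essentially all of the content, and the route you sketch for it does not work.

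First, the local filling inequality inside $X$ cannot be obtained the way you indicate. Linear local contractibility only provides \emph{continuous} contractions. Pushing $Z$ into a skeleton and coning off therefore has no Lipschitz map back into $X$ along which currents could be pushed forward. Ahlfors regularity gives a factorization $X\to\Sigma\to E(X)$, but no Lipschitz return map $\Sigma\to X$. Moreover, a mod $2$ filling of $Z$ inside $X$ is, modulo $2$, essentially $T$ restricted to the region enclosed by $Z$. So your filling inequality is itself a local isoperimetric inequality in $X$, as strong as what you want to prove.

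If you instead fill in $E(X)$ via Theorem \ref{thme: filling-inj-spaces}, then $W$ is not a current in $X$. Neither the uniqueness in Theorem \ref{thme: existence-non-orient-llc} nor Lemma \ref{lemma: uniqueness-mod-2} (which is global and for currents in $X$) applies to $S-W$. The retraction-plus-degree trick of Lemma \ref{lemma: lower-bound-current} only yields information about a pushforward to $M$, whose mass is uncontrolled. It does not by itself bound $\norm{T}_2(E\cap B)$ or $\norm{T}_2(B\setminus E)$.

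Finally, your plan for the spherical piece cannot succeed. Slicing only gives $\bM_2(V)\lesssim r^{-1}\norm{T}_2(E_s\cap 2B)$. This is of order $r^{n-1}$ exactly in the nontrivial case where $E$ fills a fixed fraction of $2B$, so no choice of $t$ makes $V$ small compared to $\mathscr{M}_-(E\,|\,\lambda B)$.

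\textbf{The idea you are missing is the reduction to the orientable case, which is what the paper does.} The orientable double cover $\tilde X$ is linearly locally contractible by Lemma \ref{lemma: ae-llc-orientable-double-ae-lcc}. It is Ahlfors $n$-regular because the covering map $\pi$ is a local isometry. Hence \cite[Theorem 1.5]{BMW} gives the relative isoperimetric inequality on $\tilde X$. Since \eqref{eq: rel-isop-intro} only involves $\Ha^n$ and neighbourhoods inside $\lambda B$, it descends through the local isometry to all sufficiently small balls in $X$. Compactness then upgrades this local inequality to all balls. No mod $2$ filling inequality or local constancy theorem is needed.
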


        Here, the constants $C$ and $\lambda$ depend only on the data of $X$. The inequality \eqref{eq: rel-isop-intro} is called a relative isoperimetric inequality and is strongly related to the Poincar\'e inequality. In particular, it follows that a metric manifold, as in Theorem \ref{thme: poincare}, also satisfies a weak $1$-Poincar\'e inequality; see \cite{Bobkov-Houdre-1997, Kinnunen-Korte-Shanmugalingam-Tuominen-2012, Korte-Lahti-2014}.

    \subsection{Structure of the article}
        In Section \ref{sec: notions}, we introduce the notation and basic results we need throughout the article. We review the theory of metric currents in Section \ref{sec: currents} and give a detailed overview of flat currents modulo $p$. There, we also prove a compactness and rectifiability result for such currents. 
        
        \medskip

        We present the new measure theoretic version of linear local contractibility, which we call almost everywhere linear local contractibility in Section \ref{sec: llc}. In addition, we explain the construction of the orientable double cover of a metric manifold and how one can glue two copies of the manifold along their boundaries. For both spaces, we construct a metric using the metric of the original manifold. The main motivation behind the new version of linear local contractibility is that it interacts well with these constructions. Indeed, if $X$ is linearly locally contractible, then the manifold $\hat{X}$ obtained by gluing two copies along their boundaries does not need to be linearly locally contractible. However, $\hat{X}$ will be almost everywhere linearly locally contractible in case the boundary of $X$ has zero Hausdorff $n$-measure. Finally, we discuss how to extend bi-Lipschitz maps from Euclidean space into metric manifolds in a controlled manner. These techniques allow us to show that the degree of Lipschitz maps behaves nicely in metric manifolds that are almost everywhere linearly locally contractible. Similar results were already obtained in \cite{BMW}.

        \medskip
        
        In Section \ref{sec: orientable-mflds}, we prove the existence of a metric fundamental class for orientable manifolds that are almost everywhere linearly locally contractible. We first consider only closed manifolds. In this case, the argument is very similar to the proof of \cite[Theorem 1.1]{BMW}. Indeed, by using linear local contractibility, we can equip the rectifiable part of the manifold with an orientation that is compatible with the topological manifold. This "metric orientation" induces an integer rectifiable current $T$ satisfying the upper measure bound in \eqref{eq: mass-measure-hausdorff-existence-weak-llc}. It follows from a deep theorem of Bate, Theorem \ref{thme: bate-perturb}, and degree theory, that $T$ is a cycle and Definition \ref{def: metric fundamental class}(a) holds. The result for manifolds with boundary can easily be reduced to the closed case with the constructions in Section \ref{sec: llc}.

        \medskip

        The proofs of the different existence results of the metric fundamental class modulo $2$ are contained in Section \ref{sec: non-orient-mflds}. We begin with the case when the manifold $X$ is closed and almost everywhere linearly locally contractible. By construction, the orientable double cover $\tilde{X}$ is orientable and locally isometric to the original manifold. As a consequence, $\tilde{X}$ is almost everywhere linearly locally contractible as well and thus, has a metric fundamental class $\tilde{T}$. We construct an integral current $T$ in $X$ from $\tilde{T}$ that satisfies the measure bound \eqref{eq: mass-measure-hausdorff-existence-weak-llc-non-oriented}. To show that $T$ is a cycle modulo $2$ and satisfies \eqref{eq: def-metric-fundamental-class-modulo 2} we use degree theory and the techniques introduced in Section \ref{sec: llc}. In case $X$ is closed and has finite Nagata dimension, we use an approximation of $X$ by a simplicial complex. This is a construction frequently used in the context of Nagata dimension; see e.g. \cite{basso2021undistorted, lang-2005, marti2024characterization}. More precisely, there exists a simplicial complex $\Sigma$ with bounded Hausdorff measure and Lipschitz maps $\varphi \colon X \to \Sigma$, and $\psi\colon \Sigma\to E(X)$. Furthermore, the composition $(\psi \circ\varphi) \colon X \to E(X)$ is arbitrarily close to the inclusion of $X$ into its injective hull $E(X)$. The space $E(X)$ has similar properties as $l^\infty$ but is compact; see Section \ref{sec: notions-1-part}. We then approximate the composition $(\varphi\circ \varrho)$ by a Lipschitz map, where $\varrho$ denotes the homeomorphism from the smooth manifold $M$ into $X$. Using this approximation, we can push the fundamental class modulo $2$ of $M$ into $\Sigma$. In this way, we construct a sequence of integral cycles modulo $2$ with uniformly bounded $2$-mass. It follows from the compactness and rectifiability theorem for flat currents modulo $2$ established in Section \ref{sec: currents} that $X$ has a metric fundamental class modulo $2$. For non-orientable closed metric surfaces $X$, we use a recent uniformization result \cite{nta-rom22}. This result implies that there exists a weakly conformal map from a smooth surface into $X$. We can approximate this map by Lipschitz maps with bounded area. As before, we can use a pushforward argument and find a metric fundamental class modulo $2$ in $X$. Finally, we show the existence of the metric fundamental class for non-orientable manifolds with boundary by reducing the proof to the corresponding case for closed manifolds.

\section{Preliminaries}\label{sec: notions}
    \subsection{Metric notions}\label{sec: notions-1-part}
        Let $(X,d)$ be a metric space. Given $x \in X$ and $r>0$, we write $B(x,r)$ for the open ball with center $x \in X$ and radius $r> 0$. The infimal distance between two subsets $A$ and $B$ is defined by 
        $$d(A, B)=\inf\big\{ d(a, b) : a\in A \text{ and } b\in B\big\}.$$
        Given $A \subset X$ and $r>0$, we denote the open $r$-neighborhood of $A$ by 
        $$N_r^X(A) = \left\{x \in X \colon d(A,x) < r\right\}.$$
        If the ambient space $X$ is clear from the context, we simply write $N_r(A)$. We call a map $f\colon X \to Y$ between metric spaces \textit{\(L\)-Lipschitz} if $d(f(x),f(y)) \leq L d(x,y)$ for all $x,y \in X$. The smallest constant that satisfies this inequality is called the \textit{Lipschitz constant} of $f$ and is denoted by $\Lip(f)$. In case $f$ is injective and its inverse $f^{-1}$ is Lipschitz as well, we say $f$ is \textit{bi-Lipschitz}. We write $\LIP(X,Y)$ for the set of all Lipschitz maps from $X$ to $Y$. If $Y= \R$ we abbreviate $\LIP(X,\R)=\LIP(X)$. The uniform distance between two maps \(f, g\colon X \to Y\) is given by
        \[
        d(f, g)=\sup\big\{ d(f(x), g(x)) : x\in X\big\}.
        \]
        The following result is easy to verify; see e.g. \cite[Lemma 2.1]{BMW}.

        \begin{lemma}\label{lemma: lip-approx-easy}
            Let \(f\colon X \to Y\) be a continuous map from a compact metric space \(X\) to a separable metric space \(Y\) that is an absolute Lipschitz neighborhood retract. Then, for every \(\epsilon>0\) there exists a Lipschitz map \(g \colon X \to Y\) with \(d(f, g)<\epsilon\). 
        \end{lemma}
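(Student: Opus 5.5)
The plan is to use the defining property of an absolute Lipschitz neighborhood retract (ALNR) to reduce the approximation problem to one inside a Banach space, and then pull back to \(Y\) with the Lipschitz retraction. Since \(Y\) is separable, fix an isometric embedding \(\iota\colon Y\to \ell^\infty\). Because \(Y\) is an ALNR, there are an open neighborhood \(U\) of \(\iota(Y)\) in \(\ell^\infty\) and a Lipschitz retraction \(\rho\colon U\to \iota(Y)\). If the definition is taken with respect to arbitrary metric superspaces, we may also embed \(Y\) into any convenient Banach space; \(\ell^\infty\) is chosen here because of its extension properties.

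The first step is to approximate \(F=\iota\circ f\colon X\to \ell^\infty\) uniformly by a Lipschitz map. The set \(K=F(X)\) is compact, so \(\delta:=\operatorname{dist}(K,\ell^\infty\setminus U)>0\), with \(\delta=\infty\) allowed. Let \(\eta=\min\{\delta/2,\ \epsilon/(2(1+\Lip\rho))\}\). We build a Lipschitz \(G\colon X\to\ell^\infty\) with \(d(F,G)<\eta\) by a partition of unity. Cover \(X\) by finitely many balls \(B(x_i,r)\), where \(r\) is chosen by uniform continuity so that \(\diam F(B(x_i,2r))<\eta\). Put \(\varphi_i(x)=\max\{0,\,r-d(x,x_i)\}\); each \(\varphi_i\) is Lipschitz, and \(\sum_i\varphi_i\) is bounded below by a positive constant on \(X\). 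Normalize to \(\psi_i=\varphi_i/\sum_j\varphi_j\), which is Lipschitz, and set \(G=\sum_i\psi_i F(x_i)\). For \(x\) in the support of \(\psi_i\) we have \(\|F(x_i)-F(x)\|<\eta\), so convexity gives \(\|G(x)-F(x)\|<\eta\). Alternatively, one can use McShane extension coordinatewise after a finite-dimensional approximation; the partition of unity is more direct.

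The second step uses the choice \(\eta\le\delta/2\), which gives \(G(X)\subset U\). Define \(g=\iota^{-1}\circ\rho\circ G\). This map is Lipschitz, since \(\iota^{-1}\) is an isometry on \(\iota(Y)\). For the distance estimate, use that \(\rho\) fixes \(F(x)\in\iota(Y)\):
\[
d(f(x),g(x))=\|\rho(F(x))-\rho(G(x))\|\le \Lip(\rho)\,\eta<\epsilon .
\]

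The only delicate point is convention-dependent. We must make sure that the ALNR definition provides a Lipschitz retraction on a neighborhood of the embedded copy of \(Y\) in the specific Banach space used. If instead it only provides a retraction on a uniform neighborhood \(N_r(\iota(Y))\), the same argument applies with \(\delta\) replaced by \(r\).
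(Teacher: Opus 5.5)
Your proof is correct and takes the standard route. The paper gives no argument of its own and only refers to Lemma 2.1 of Basso--Marti--Wenger, where one embeds $Y$ isometrically into $\ell^\infty$, approximates $\iota\circ f$ uniformly by a Lipschitz map staying within $\operatorname{dist}(\iota(f(X)),\ell^\infty\setminus U)$ of it (your partition of unity does this), and composes with the Lipschitz retraction. The convention worry in your last paragraph is unnecessary, since the paper's definition of an absolute Lipschitz neighborhood retract provides an open neighborhood $U$ of $Y$ in any ambient metric space, in particular in $\ell^\infty$.
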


        Here, $Y$ is said to be an absolute Lipschitz neighborhood retract if there exists $C>0$ such that the following holds. Whenever $Y$ is a subset of another metric space $Z$ then there exists an open neighborhood $U$ of $Y$ in $Z$ and a $C$-Lipschitz retraction $\pi \colon U \to Y$. Note that every closed Riemannian $n$-manifold is an absolute Lipschitz neighborhood retract; see e.g. \cite[Theorem 3.1]{hohti-1993}. We also need the following result about Lipschitz neighborhood retracts.

        \begin{lemma}\label{lemma: lip-retract-homotopies}
            Let $Y$ be a compact metric space that is an absolute Lipschitz neighborhood retract. Then there exist $C>0$ and $\epsilon>0$ with the following property. If $f,g\colon X \to Y$ are Lipschitz maps from a compact metric space $X$ into $Y$ that satisfy $d(f,g)< \epsilon$, then there exists a Lipschitz homotopy $H \colon [0,1]\times X \to Y$ between $f$ and $g$ satisfying $d(H(t,x),f(x)) \leq C d(f,g)$ for every $x \in X$ and each $t \in [0,1]$.
        \end{lemma}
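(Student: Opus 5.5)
The plan is to build the homotopy by the straight-line construction in an ambient space and then retract back to $Y$. Since $Y$ is compact (hence separable), I would embed it isometrically into a Banach space. The natural choice is $\ell^\infty$ via the Kuratowski embedding, or alternatively its injective hull $E(Y)$; either way $Y \subset Z$ with $Z$ a linear space. By the absolute Lipschitz neighborhood retract property there are an open neighborhood $U$ of $Y$ in $Z$ and a $C_0$-Lipschitz retraction $\pi\colon U \to Y$.

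The first step is to get a uniform tubular neighborhood. Since $Y$ is compact and $U$ is open, the distance from $Y$ to $Z\setminus U$ is positive. So there is $\delta>0$ with $N^Z_\delta(Y) \subset U$. Set $\epsilon = \delta$.

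Next, given Lipschitz maps $f,g$ with $d(f,g)<\epsilon$, define
$$
G(t,x) = (1-t) f(x) + t g(x) \in Z .
$$
Then $\|G(t,x) - f(x)\| = t\,\|g(x)-f(x)\| \le d(f,g) < \delta$, so $G$ takes values in $N_\delta(Y) \subset U$. Moreover, $G$ is Lipschitz on $[0,1]\times X$ (with the product metric, e.g. the $\ell^1$-sum). Indeed,
$$
\|G(t,x) - G(s,y)\| \le |t-s|\, \|g(x)-f(x)\| + \|f(x)-f(y)\| + \|g(x)-g(y)\|,
$$
which is bounded by $\max\{\diam Y,\ \Lip f + \Lip g\}$ times the distance; $\diam Y$ is finite because $Y$ is compact.

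Now put $H = \pi \circ G$. It is Lipschitz. We have $H(0,\cdot) = \pi\circ f = f$ and $H(1,\cdot) = g$, since $\pi$ fixes $Y$. For the displacement, using $\pi(f(x)) = f(x)$,
$$
d(H(t,x), f(x)) = d\bigl(\pi(G(t,x)), \pi(f(x))\bigr) \le C_0 \|G(t,x)-f(x)\| \le C_0\, d(f,g).
$$
So $C = C_0$ works.

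There is no real obstacle. The only points needing care are the choice of a linear ambient space containing $Y$ isometrically, and the uniform neighborhood $N_\delta(Y)\subset U$, which is exactly where compactness of $Y$ is used. Both constants $C$ and $\epsilon$ depend only on $Y$, not on $X$, $f$ or $g$.
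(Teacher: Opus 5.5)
Your proof is correct and follows the paper's argument: embed $Y$ isometrically in $\ell^\infty$, use compactness to get a Lipschitz retraction defined on a uniform neighborhood $N_\epsilon(Y)$, and compose the straight-line homotopy with it. You also spell out steps the paper leaves implicit, namely the uniform neighborhood and the Lipschitz and displacement estimates. One small correction: your parenthetical alternative $E(Y)$ is not a linear space, so the straight-line homotopy should be taken in $\ell^\infty$, as your main argument does.
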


        \begin{proof}
            We embed $Y \subset l^\infty$. Since $Y$ is an absolute Lipschitz neighborhood retract there exists a $C$-Lipschitz retraction $\pi \colon N_\epsilon^{l^\infty}(Y)\to Y$ for some $C>0$ and $\epsilon>0$. Let $h\colon [0,1] \times X \to l^\infty$ be the straight-line homotopy between $f$ and $g$. Notice that $h(t,x)\in N_\epsilon(Y)$ for every $x \in X$ and each $t \in [0,1]$. Therefore, $H= \pi \circ h$ is well-defined and satisfies the desired properties. 
        \end{proof}

        Finally, we introduce injective metric spaces and the injective hull. A metric space $X$ is said to be injective if the following holds. Whenever $f \colon A \to X$ is $1$-Lipschitz and $A$ is a subset of a metric space $B$, then there exists a $1$-Lipschitz extension $\overline{f}\colon B \to X$ of $f$. Important examples of injective metric spaces are $\R$ and $l^\infty$. Moreover, for every metric space $X$, there exists a minimal injective metric space, called the injective hull $E(X)$ of $X$, such that $X$ can be embedded isometrically into $E(X)$. Here, minimal means that any isometric embedding $X \to Y$ into an injective space $Y$ admits an isometric extension $E(X) \to Y$. If $X$ is compact, then $E(X)$ is compact as well and can be realized as a subset of $l^\infty$. We refer to \cite{lang-injective} for more information.

    \subsection{Orientation and degree}\label{sec: orientation-degree}
        We review the basic definitions and properties of topological orientation and degree. For a detailed exposition of the theory, see \cite{dold-1980}. Let $X$ be a topological $n$-manifold without boundary but not necessarily compact. Notice that throughout this article, all manifolds are assumed to be connected. For $x\in X$, we denote the $n$-th local singular homology group with coefficients in $\Z$ by $H_n(X,X\setminus x)$. It follows from the excision theorem that $H_n(X,X\setminus x)$ is infinite cyclic. A \textit{local orientation} $o_x$ at $x$ is a generator of $H_n(X,X\setminus x)$. Furthermore, an orientation of $X$ is a choice of local orientations $o_x$ for each $x\in X$ that satisfies a certain continuity condition. In case an orientation exists, $X$ is said to be orientable, and $X$ together with an orientation is called oriented. It follows that if $X$ is an oriented manifold, then for each connected, compact subset $K \subset X$ the group $H_n(X,X\setminus K)$ is infinite cyclic. Moreover, there exists a generator $o_K\in H_n(X,X\setminus K)$ such that for each $x\in K$ the homomorphism from $ H_n(X,X\setminus K)$ to $ H_n(X,X\setminus x)$ induced by the inclusion sends $o_K$ to $o_x$. For a compact oriented $n$-manifold $X$ we call $o_X$ the \textit{fundamental class} of $X$ and denote it by $[X]$. Now, let $f\colon X \to Y$ be a continuous map between two oriented $n$-manifolds without boundary. Suppose that $K \subset Y$ is a compact connected subset such that $f^{-1}(K)$ is compact as well. Then $f_*\colon H_n(X,X\setminus f^{-1}(K)) \to H_n(Y,Y\setminus K)$ is a well-defined homomorphism and sends $o_{f^{-1}(K)}$ to an integer multiple of $o_K$. This integer is denoted by $\deg_K(f)$ and called the \textit{degree} \textit{of} $f$ \textit{over} $K$. If $X$ is compact, then $\deg_K(f)$ is the same number for each compact, connected subset $K\subset X$ and we write $\deg(f)$. We call $\deg(f)$ the \textit{degree} \textit{of} $f$. It is a homotopy invariant. More precisely, if $g\colon X \to Y$ is another continuous map that is homotopic to $f$, then $\deg(f) = \deg(g)$. If $Y$ is also compact, then the degree is characterized by the following equality $f_*[X]=\deg(f) \cdot [Y]$. Next, we define the local degree. Let $x\in X$ and let $U \subset X$ be an open neighborhood of $x$. By excision, the homomorphism $H_n(U,U\setminus x) \to H_n(X,X\setminus x)$ is an isomorphism. If $V\subset Y$ is an open neighborhood of $f(x)$ such that $f(U) \subset V$ and $f(y) \neq f(x)$ for all $y \in U\setminus x$, then $f_* \colon H_n(U,U\setminus x) \to H_n(V,V\setminus f(x))$ is well-defined. This homomorphism sends the local orientation $o_x$ to an integer multiple of the local orientation $o_{f(x)}$. We call this number the \textit{local degree of $f$ at} $x$ and denote it by $\deg(f,x)$. Notice that the local degree does not depend on $U$ and $V$. The local degree satisfies the following multiplicity property; see \cite[Chapter 8, Corollary 4.6]{dold-1980}. If $h \colon Y \to Z$ is a continuous map into an oriented topological $n$-manifold $Z$ without boundary and the local degrees $\deg(f,x)$ and $\deg(h,f(x))$ are well-defined, then
        $$\deg(h\circ f, x) = \deg(h,f(x)) \cdot \deg(f,x).$$
        We have the following relation between the degree and the local degree. Suppose that $y \in Y$ is a point such that $f^{-1}(y)$ is finite. Then the local degree of $f$ at each $x \in f^{-1}(y)$ is well-defined and we have
        $$\deg(f) = \sum_{x \in f^{-1}(y)} \deg(f,x).$$
        We refer to this property as the additivity property of the degree; see \cite[Chapter 8, Proposition 4.7]{dold-1980}. A topological $n$-manifold with boundary $X$ is said to be orientable if its interior $X \setminus \partial X$ is orientable. In case $X$ is compact and oriented, it follows that $H_n(X, \partial X)$ is infinite cyclic and the orientation of $X$ induces a generator $[X]$ of $H_n(X, \partial X)$. Again, we call $[X]$ the fundamental class of $X$. Let $f\colon X \to Y$ be a continuous map between two compact, oriented $n$-manifolds with boundary such that $f(\partial X) \subset \partial Y$. Then, the \textit{degree $\deg(f)$ of $f$} is the unique integer such that the homomorphism $f_* \colon H_n(X,\partial X)\to H_n(Y,\partial Y)$ satisfies $f_*[X]=\deg(f) \cdot [Y]$. Finally, we note that it is possible to define orientability and the degree using singular homology groups with coefficients in $\Z/2\Z$ instead of $\Z$ in the same manner as before; see \cite[Chapter 8, Definition 4.1]{dold-1980}. Every manifold is orientable in this sense. Furthermore, the degree obtained in this way is an element of $\Z/2\Z$ and satisfies all properties analogous to those explained above.
        
    \subsection{Rectifiability}
        Let $X$ be a complete metric space. We denote by $\Ha^n$ the $n$-dimensional Hausdorff measure on $X$, which is normalized such that $\Ha^n$ is equal to the Lebesgue measure $\leb^n$ on $\R^n$. A $\Ha^n$-measurable subset $E \subset X$ is called \textit{$n$-rectifiable} if there exist compact sets $K_i \subset \R^n$ and bi-Lipschitz maps $\varphi_i \colon K_i \to X$ such that 
        $$\Ha^n\left( E \setminus \bigcup_i \varphi_i(K_i)\right) =0.$$
        Notice that this definition is not the standard definition, but using \cite[Lemma 4.1]{ambrosio-kirchheim-2000} it can easily be shown that the definitions are equivalent. We say a $\Ha^n$-measurable subset $P \subset X$ is \textit{purely $n$-unrectifiable} if $\Ha^n(P \cap E)=0$ for every $n$-rectifiable subset $E \subset X$. We need the following deep result due to Bate.

        \begin{theorem}\label{thme: bate-perturb}{\normalfont (\cite[Theorem~1.2]{bate24})}
            Let $X$ be a complete metric space and let $P \subset X$ be purely $n$-unrectifiable with finite Hausdorff $n$-measure. Then, for every $m \in \N$ the set 
            $$\{f \in \LIP_1(X,\R^m) \colon \Ha^n(f(P)) = 0\}$$
            is residual. 
        \end{theorem}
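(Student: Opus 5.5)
The natural route is a Baire category argument in the space $\LIP_1(X,\R^m)$ of $1$-Lipschitz maps with the uniform distance. For bounded $X$ this space is complete. In general I would use the supremum metric truncated at $1$, or restrict to bounded subsets of $X$; neither change affects residuality. If $m<n$ there is nothing to prove, since $\Ha^n(f(P))=0$ for every Lipschitz $f$; so assume $m\ge n$.

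Since $\Ha^n(P)<\infty$, inner regularity lets us write $P=N\cup\bigcup_j K_j$ with $\Ha^n(N)=0$ and each $K_j$ compact and purely $n$-unrectifiable. The null set $N$ is harmless because $\Ha^n(f(N))=0$ for every Lipschitz $f$. It therefore suffices to show, for fixed compact $K\subset P$ and $k\in\N$, that
$$\mathcal{U}_{K,k}=\{f\in\LIP_1(X,\R^m)\colon \Ha^n_\infty(f(K))<1/k\}$$
contains an open dense set. Intersecting over $j$ and $k$ then gives a residual set of maps with $\Ha^n(f(K_j))=0$ for all $j$, because the Hausdorff content and the Hausdorff measure have the same null sets.

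Openness is routine. If $f(K)$ is covered by finitely many open sets of total $n$-content below $1/k$, then by compactness of $f(K)$ it is covered with some positive margin $\delta$. Any $g$ with $d(f,g)<\delta$ has $g(K)$ inside the same cover.

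Density is the real content and, I expect, the main obstacle. Given $f$ and $\eta>0$, I must find $g$ with $d(f,g)<\eta$ and $\Ha^n_\infty(g(K))<1/k$, and $g$ must remain $1$-Lipschitz. First replace $f$ by $(1-\eta')f$ to gain slack in the Lipschitz constant. Then use the structure of purely unrectifiable sets: at $\Ha^n$-almost every point of $K$, and at small scales, $K$ carries no "$n$-dimensional independent Lipschitz directions". Concretely, one wants a decomposition of $K$, up to small measure, into finitely many pieces of small diameter. On each piece $K_i$ one can modify $f$ by a small $\eta'$-Lipschitz perturbation that collapses $f(K_i)$ onto a set of small $n$-content, for instance onto an $(n-1)$-dimensional set in some direction. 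This step should use that any Lipschitz map from $K_i$ into $\R^n$ has image of small measure in a "generic direction", which is the unrectifiability input, phrased via a Kirchheim-type area formula failure or a Federer–Fleming projection argument. Finally, glue the local perturbations with a partition of unity subordinate to well-separated pieces, and use McShane extension coordinatewise to keep the total Lipschitz constant at most $1$.

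The delicate point is that all local collapses must be achievable simultaneously with a uniformly small Lipschitz cost. That is exactly where pure unrectifiability, rather than mere measure smallness, has to be used quantitatively. There I would first try to prove a single-scale lemma: for every $x$ in a full-measure subset of $K$ and every small $r$, there is an $\epsilon r$-Lipschitz perturbation supported near $B(x,r)$ that reduces $\Ha^n_\infty(f(K\cap B(x,r)))$ by a fixed factor. I would then iterate this lemma with a Vitali covering.
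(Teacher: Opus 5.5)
The paper does not prove this statement. It imports it from Bate as a black box, noting that the original version needed positive lower density almost everywhere and that this was removed later.

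Your reduction is sound and standard. It consists of Baire category in $\LIP_1(X,\R^m)$ (truncating the uniform metric if $X$ is unbounded), inner regularity to pass to compact purely unrectifiable $K\subset P$, the sets $\mathcal{U}_{K,k}$ defined through Hausdorff content, and openness via a finite open cover of the compact set $f(K)$. However, density of $\mathcal{U}_{K,k}$ is the entire theorem, and you do not prove it. Your single-scale lemma is announced as something you ``would first try to prove'', with no argument, and it is essentially as strong as the statement itself. So there is a genuine gap exactly where pure unrectifiability has to be used.

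The ingredients you suggest do not close this gap in a general complete metric space. First, there are no `directions' in $X$ to average over. The Euclidean mechanism of this kind is the Besicovitch--Federer projection theorem, which has no analogue in general metric spaces (it fails, e.g., in infinite-dimensional Banach spaces). Theorem~\ref{thme: bate-perturb} is precisely its replacement, so appealing to it is circular. Second, no property of $f$ itself can be used. A purely $n$-unrectifiable compact set can have a $1$-Lipschitz image in $\R^n$ of positive measure: the four-corner Cantor set under a suitable orthogonal projection is an example. Moreover, Kirchheim's area formula concerns Lipschitz maps from $\R^n$ into $X$, not from $K$ into $\R^n$, so there is no ``failure'' to exploit. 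Third, Federer--Fleming radial projections in the target do collapse $f(K)$ with small displacement. But their Lipschitz constants blow up as the centres approach $f(K)$, whereas the whole difficulty is to stay inside $\LIP_1$. Finally, you insist that each local collapse be an $\eta'$-Lipschitz perturbation. That means $f|_{K_i}$ must already be nearly degenerate at that scale, since a small-Lipschitz perturbation cannot flatten a robustly $n$-dimensional configuration.

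What is missing, then, is a quantitative statement, valid at almost every point and at small scales, extracted from the purely qualitative hypothesis that $K$ contains no bi-Lipschitz image of a set of positive measure. Producing that is the substance of Bate's work. The fact that his first proof needed positive lower density shows it is not a routine covering argument.
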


        Here, $\Lip_1(X,\R^m)$ denotes the space of all Lipschitz maps $X\to \R^m$ with Lipschitz constant at most $1$. The theorem was first proven in \cite{bate-perturb} with the additional assumption that the lower density of $P$ is positive at almost every point. However, \cite[Theorem 1.5]{bate24} shows that this assumption is not necessary. The next result can be seen as a strong converse of the previous result in Euclidean space.

        \begin{theorem}\label{thme: bate-jakub}{\normalfont (\cite[Theorem~1.1]{bate-jakub})}
            Let $E\subset\R^k$ be $n$-rectifiable and $m>n$. Then the set
            $$\{f \in \LIP_1(\R^k,\R^m) \colon \Ha^n(f(E)) = \Ha^n(E)\}$$
            is residual. 
        \end{theorem}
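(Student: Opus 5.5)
The plan is to run a Baire category argument in the complete metric space $\LIP_1(\R^k,\R^m)$, equipped with the topology of uniform convergence on compact sets. Since every $1$-Lipschitz map satisfies $\Ha^n(f(E))\leq \Ha^n(E)$, only the reverse inequality needs to be shown on a residual set. I would first reduce to the case where $E$ is bounded. I would then decompose $E$, up to an $\Ha^n$-null set, into countably many compact pieces $E_i$ with the following property: for a prescribed $\delta>0$, each $E_i$ is the image of a compact subset of an $n$-plane $V_i$ under a $(1+\delta)$-bi-Lipschitz map that is $C^1$-close to the inclusion. This uses the standard structure of rectifiable sets in $\R^k$, namely approximate tangent planes and $C^1$ graphs. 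For fixed rationals $\eta>0$ and each finite index set, it then suffices to show that
$$\mathcal{G}_\eta=\Big\{f : \Ha^n\big(f(E)\big) > \Ha^n(E)-\eta\Big\}$$
contains an open dense set. Intersecting over $\eta=1/j$ then gives the residual set.

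\textbf{Openness.} The difficulty is that $f\mapsto \Ha^n(f(K))$ is not lower semicontinuous in general. I would therefore replace $\mathcal{G}_\eta$ by a robust open subcondition defined via topological degree. Fix finitely many pairwise disjoint compact pieces $Q_1,\dots,Q_N\subset E$, each nearly flat and of total measure greater than $\Ha^n(E)-\eta/2$. Let $\mathcal{U}$ be the set of $f$ for which there exist $n$-planes $W_j\subset\R^m$, with orthogonal projections $P_j$, such that two conditions hold. First, $P_j\circ f$ restricted to a neighbourhood of $Q_j$ in $V_j$ has nonzero degree over a compact set $D_j\subset W_j$ with $\leb^n(D_j)>\Ha^n(Q_j)-\eta/(4N)$. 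Second, the sets $f(Q_j)$ are at positive distance from each other. Both conditions survive small uniform perturbations: the degree is stable under homotopies that avoid $D_j$ shrunk slightly, and positive separation is clearly stable. For $f\in\mathcal{U}$ we have $D_j\subset P_j f(Q_j)$ up to the controlled shrinking, and $P_j$ is $1$-Lipschitz, so $\Ha^n(f(Q_j))\geq \leb^n(D_j)$. Disjointness of the images $f(Q_j)$ then gives $\Ha^n(f(E))>\Ha^n(E)-\eta$. Hence $\mathcal{U}\subset\mathcal{G}_\eta$ and $\mathcal{U}$ is open.

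\textbf{Density.} This is the main obstacle. Given $f\in\LIP_1$ and $\epsilon>0$, I need $f'\in\mathcal{U}$ with $d(f,f')<\epsilon$ on a large ball, while preserving the global Lipschitz constant $1$. I would first shrink $f$ slightly to $(1-\epsilon')f$ to gain room in the Lipschitz constant. Next, on a fine grid of small cubes meeting each $Q_j$, I would replace $f$ near $Q_j$ by a map whose restriction to $V_j$ is nearly a linear isometric embedding. Here the hypothesis $m>n$ enters. The derivative of $f|_{V_j}$ is a contraction of rank at most $n$, so there is always a direction normal to its image available. Adding a small-amplitude, high-frequency sawtooth (folding) in that normal direction lengthens the contracted directions to nearly unit speed without exceeding Lipschitz constant $1$. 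This is a Nash--Kuiper-type strip construction, carried out one contracted eigendirection at a time. The extra dimension also lets me translate the images of different pieces $Q_j$ slightly in generic normal directions, which makes them disjoint. A piecewise affine map that is nearly isometric on $V_j$ has nonzero degree onto a projection of almost full measure, so the degree condition in $\mathcal{U}$ is met.

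The remaining issue is to glue the modified local maps back to $(1-\epsilon')f$ away from the cubes while keeping Lipschitz constant $\le 1$. The oscillations have amplitude $o(1)$ compared with the cube size, so the gluing is done by a cutoff whose cost is absorbed by the $\epsilon'$ of slack, and Kirszbraun's extension theorem handles the transition regions. I expect this step, making the local near-isometric folding compatible with a global $1$-Lipschitz bound, to require the most care.
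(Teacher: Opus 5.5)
The paper gives no proof of this statement; it quotes it from the cited work. Your argument therefore has to stand on its own, and as written it does not. The open set $\mathcal U$ and the density construction are incompatible.

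\textbf{The set $\mathcal U$ is not dense.} You fix $Q_1,\dots,Q_N$ before defining $\mathcal U$. The certificate asks that a single projection, namely $P_j\circ f\circ g_j$ on a neighbourhood $U_j$ of $K_j$ (where $Q_j=g_j(K_j)$), have nonzero degree over a set $D_j$ of measure almost $\Ha^n(Q_j)$. Since $P_j\circ f$ is $1$-Lipschitz, this forces $f$ to flatten the whole fixed piece almost isometrically into the direction $W_j$, and such maps are not dense. Take $f$ constant, and let $f'$ be $\epsilon$-close to $f$ on a compact set containing $g_j(\overline{U_j})$. Every point over which $P_j\circ f'\circ g_j$ has nonzero degree lies in its image, which is an $n$-ball of radius $\epsilon$. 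Hence $\leb^n(D_j)\le\omega_n\epsilon^n<\Ha^n(Q_j)-\eta/(4N)$ for small $\epsilon$, and no element of $\mathcal U$ is near $f$.

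Your folding cannot help for the same reason. A corrugated map of small amplitude stays uniformly close to the uncorrugated one. So its projection onto any single $n$-plane has, up to $o(1)$, no more measure than the projection of the contracted image. The area gained in the normal direction is exactly what $P_j$ discards. Thus your claim that a piecewise affine, nearly isometric map has nonzero degree over a projection of almost full measure is false for folded maps.

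\textbf{What a repair needs, and what is still missing.} You must let the pieces depend on the map: take $\mathcal U_\eta$ to be the union of your sets over all finite families of small pieces, with the total measure bound built in. This set is open. It lies in $\mathcal G_\eta$ once you correct the fact that degree on $U_j$ only gives $D_j\subset P_jf(g_j(U_j))$, not $D_j\subset P_jf(Q_j)$. Choosing $\leb^n(U_j\setminus K_j)$ small and subtracting $(1+\delta)^n\leb^n(U_j\setminus K_j)$ handles this.

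But density then becomes the actual content of the theorem. Every $1$-Lipschitz $f$ must be approximated by a $1$-Lipschitz $f'$ that is close to an affine isometric embedding on each of finitely many tiny pieces covering most of $E$, with pairwise separated images. Your sketch does not deliver this, for four reasons.
\begin{itemize}
\item The faces of the sawtooth would have to become the new pieces, with the fold strips removed.
\item Where $Df$ degenerates in the corrugation direction, the sawtooth folds back onto itself, so images of different faces coincide.
\item Iterating over directions with a fixed normal fails when $m=n+1$. After the first corrugation the normal to the image changes from face to face, and the next corrugation must match across the earlier fold lines.
\item Small generic translations do not make $n$-dimensional images in $\R^m$ disjoint when $m\le 2n$; they only make them $\Ha^n$-almost disjoint. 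The positive separation demanded in $\mathcal U$ therefore needs an extra argument.
\end{itemize}
This crumpling-with-injectivity step, where $m>n$ is genuinely used, is the missing idea.

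A smaller point: the paper works with the uniform distance on $\LIP_1(\R^k,\R^m)$, while you use uniform convergence on compacta, which gives a different notion of residual. In the uniform setting $(1-\epsilon')f$ is not close to an unbounded $f$. You would need something like $f\circ h$ with $h(x)=x-\epsilon'\pi_{\bar B_R}(x)$, where $\pi_{\bar B_R}$ is the nearest-point projection onto $\bar B_R$. This $h$ is $1$-Lipschitz, equals $(1-\epsilon')x$ on $B_R$, and moves points by at most $\epsilon' R$.
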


         If $X$ is a complete metric space, then $\LIP_1(X,\R^m)$ equipped with the uniform distance is a complete metric space as well. In this case, the Baire category theorem implies that every residual subset of $\LIP_1(X,\R^m)$ is dense.
        
    \subsection{Nagata dimension}
        The Nagata dimension can be seen as a quantitative version of the topological dimension that is better adapted to Lipschitz analysis. It has recently gained attention in the study of metric spaces, most prominently in the context of Lipschitz extension problems; see e.g. \cite{basso2021undistorted, GCDavid-Nagata, lang-2005, lang2025isoperimetric}.
    \medskip
    
        We say a covering of a metric space $X$ has \textit{\(s\)-multiplicity} at most \(N\) if each subset of \(X\) with diameter less than \(s\) intersects at most \(N\) members of the covering. 

        \begin{definition}\label{def: nagata-dimension}
            A metric space has Nagata dimension $\dim_N X\leq N$ if there exists $c>0$ such that for every $s>0$ there exists a covering $\{U_i\}_{i\in I}$ of $X$ with $s$-multiplicity at most $N+1$ and $\diam U_i<cs$ for every $i \in I$.
        \end{definition}

        The Nagata dimension is always at least the topological dimension and at most the Assouad dimension of a space \cite{Enrico-nagata}. In particular, every doubling metric space has finite Nagata dimension. 
        
        \medskip

        Let $\Sigma$ be a finite simplicial complex and denote by $I$ the set of vertices of $\Sigma$. Then, up to rescaling, $\Sigma$ can be realized as a subcomplex of
        $$\Sigma(I) = \left\{ x  \in l^2(I) \colon x_i\geq 0 \textup{ and } \sum_{i\in I} x_i =1 \right\}.$$
        The \textit{$l^2$-metric} on $\Sigma$ is the metric induced by the $l^2$-norm on $l^2(I)$ and is denoted by $|\cdot|_{l^2}$. For more details on simplicial complexes, we refer to \cite{basso2021undistorted}. We need the following factorization theorem. The result is essentially \cite[Proposition 6.1]{basso2021undistorted} combined with a Federer-Fleming deformation type theorem.

        \begin{theorem}\label{thme: nagata-factorization}
                Let $X$ be a compact metric space with finite Hausdorff $n$-measure and finite Nagata dimension. Then there exists a constant $C>0$, depending only on the data of $X$, such that for every $\epsilon>0$ there exist a finite simplicial complex $\Sigma$ equipped with the $l^2$-metric and Lipschitz maps $\varphi\colon X \to \Sigma$, $\psi\colon \Sigma \to E(X)$ with the following properties
             \begin{enumerate}
                 \item $\psi$ is $C$-Lipschitz on every simplex in $\Sigma$;
                 \item $\Ha^n(\varphi(X)) \leq C \Ha^n(X);$
                 \item $\Sigma$ has dimension $\leq n$ and every simplex in $\Sigma$ is a Euclidean simplex of side length $\epsilon$;
                 \item $\textup{Hull}(\varphi(X)) = \Sigma$ and $d(x,\psi(\varphi(x))) \leq C \epsilon$ for all $x \in X$.
            \end{enumerate}
        \end{theorem}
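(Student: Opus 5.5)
The plan is to build the factorization from a Nagata covering at scale $s\sim\epsilon$, as in \cite[Proposition 6.1]{basso2021undistorted}, and then push the image down to the $n$-skeleton by a Federer--Fleming type radial projection. Let $N=\dim_N X$ and $c$ the Nagata constant. For $s=\epsilon$, take a covering $\{U_i\}_{i\in I}$ of $X$ with $s$-multiplicity at most $N+1$ and $\diam U_i<cs$; by compactness $I$ may be taken finite. Let $\Sigma'$ be the nerve of this covering, realized in $\Sigma(I)$ and rescaled so that every simplex is Euclidean with side length $\epsilon$. It has dimension $\le N$, since any $x$ lies in at most $N+1$ sets.

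\textbf{Construction of $\varphi'$ and $\psi'$.} Define $\varphi'\colon X\to\Sigma'$ by the standard partition of unity built from the functions $x\mapsto \max\{0, s/2 - d(x,X\setminus U_i)\}$, suitably normalized. Here $\varphi'(x)$ lies in the simplex spanned by the $U_i$ containing $x$, and after the $\epsilon$-rescaling $\Lip(\varphi')\le C(N,c)$, independently of $\epsilon$. For each vertex $i$ choose a point $\psi'(i)\in U_i\subset X\subset E(X)$. Extend $\psi'$ skeleton by skeleton, using that $E(X)$ is injective (a Lipschitz map from the boundary of a simplex into an injective space extends with comparable constant). Vertices of a common simplex correspond to sets $U_i$ with a common point, so their images are at distance $\le 2c\epsilon$. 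Hence the extension is $C$-Lipschitz on each simplex and maps each simplex into a $C\epsilon$-neighborhood of any of its vertex images.

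This gives $d(x,\psi'(\varphi'(x)))\le C\epsilon$, since $x\in U_i$ for every vertex $i$ of the simplex carrying $\varphi'(x)$. It also gives $\Ha^n(\varphi'(X))\le C\Ha^n(X)<\infty$.

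\textbf{Reduction to dimension $n$.} If $N\le n$ we are done. Otherwise $\varphi'(X)$ has $\Ha^{k}$-measure zero in every $k$-simplex $\Delta$ with $k>n$. We then project radially, starting from the top-dimensional simplices. In each top simplex $\Delta$ choose a center $a\in\inte\Delta\setminus\varphi'(X)$ and let $P_a$ be the radial projection of $\Delta\setminus\{a\}$ onto $\partial\Delta$. The map $P_a$ is the identity on lower simplices, so these projections glue along faces. Since $\varphi'(X)$ is compact and misses $a$, the composition $P_a\circ\varphi'$ is Lipschitz. The key estimate is the Federer--Fleming averaging argument. Near a point $y$, $P_a$ is locally Lipschitz with constant $\lesssim \epsilon/|y-a|$, so
$$\Ha^n(P_a(A)) \lesssim \int_A \epsilon^n|y-a|^{-n}\,d\Ha^n(y) \qquad \text{for } A=\varphi'(X)\cap\Delta.$$
Averaging over $a$ in the concentric half-size simplex, and using $\int_{\Delta}|y-a|^{-n}\,da\lesssim \epsilon^{k-n}$ (finite because $n<k$), gives a center $a$ with
$$\Ha^n\bigl(P_a(\varphi'(X)\cap\Delta)\bigr)\le C(k,n)\,\Ha^n\bigl(\varphi'(X)\cap\Delta\bigr).$$
This $a$ can be chosen outside the null set $\varphi'(X)$. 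Iterating from dimension $N$ down to $n+1$ yields $\varphi\colon X\to\Sigma'^{(n)}$ with $\Ha^n(\varphi(X))\le C\Ha^n(X)$.

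\textbf{Properties (1)--(4).} Let $\Sigma=\textup{Hull}(\varphi(X))$ be the smallest subcomplex containing $\varphi(X)$; it has dimension $\le n$, and set $\psi=\psi'|_\Sigma$, so (1)--(3) hold. Each projection moves a point only within its simplex, which has diameter $\epsilon$, and $\psi'$ is $C$-Lipschitz on each simplex. Therefore $d(x,\psi(\varphi(x)))\le C\epsilon + C\cdot N\epsilon$, which gives (4) after enlarging $C$. All constants depend only on $n$, $N$ and $c$, that is, on the data of $X$.

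I expect the main obstacle to be the averaging estimate for the projection centers. One must check that the local Lipschitz bound of $P_a$ combined with the integrability of $|y-a|^{-n}$ over $a$ gives a bound on $\Ha^n$ of the image for a general metric (non-rectifiable) set $A$. I would handle this by covering $A$ with small sets, applying the local Lipschitz bound on each, and then using Fubini on the resulting sums before passing to the limit. A secondary point to verify is that the injective extensions of $\psi'$ are compatible across shared faces, which holds because the extension is built inductively on skeleta.
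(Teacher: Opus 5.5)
Your proposal is correct and follows the paper's route. The paper obtains $\Sigma'$, $\varphi'$, $\psi$ by citing \cite[Proposition 6.1]{basso2021undistorted} and the push into the $n$-skeleton by citing \cite[Proposition 5.3]{marti2024characterization}; these are the nerve construction and the Federer--Fleming type averaging you carry out by hand. Your observation that each projection moves a point within a single simplex gives the $C\epsilon$ bound in (4) more directly than the paper's appeal to \cite[Lemma 2.3]{marti2024characterization}.

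The one real difference is the last step. You set $\Sigma=\textup{Hull}(\varphi(X))$, which satisfies (4) by definition. The paper instead first radially projects every $n$-simplex whose interior is not contained in $\varphi(X)$, from a centre off the compact set $\varphi(X)$. This does not increase $\Ha^n(\varphi(X))$, because that part of the image lands in an $(n-1)$-dimensional boundary. Afterwards every $n$-simplex of the hull lies in $\varphi(X)$, so $\Ha^n(\Sigma)\le\Ha^n(\varphi(X))\le C\Ha^n(X)$. This bound is not in the statement, but Lemma \ref{lemma: existence-non-orient-nagata-sequence} uses it, and your $\Sigma$ does not give it: the hull may contain many $n$-simplices that $\varphi(X)$ barely touches.

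There are also two minor, easily repaired slips.
\begin{enumerate}
\item The bump functions should be $\max\{0,s/4-d(x,U_i)\}$. With $d(x,X\setminus U_i)$, the $i$th function equals $s/2$ everywhere off $U_i$. With the corrected functions, $\varphi'(x)$ lies in the simplex spanned by the $U_i$ meeting $B(x,s/4)$, not by those containing $x$. It is the $s$-multiplicity, not the pointwise multiplicity, that bounds their number by $N+1$. These $U_i$ lie in $B(x,(c+1)s)$, so your estimate $d(x,\psi'(\varphi'(x)))\le C\epsilon$ survives.
\item Apply the averaging estimate to $\varphi'(X)\cap\inte\Delta$ rather than $\varphi'(X)\cap\Delta$ before summing over top simplices. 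A face may be shared by unboundedly many top simplices, so summing over closed simplices can overcount.
\end{enumerate}
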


        Here, $\textup{Hull}(A)$ denotes the \textit{hull} of $A$ in $\Sigma$ and is the smallest subcomplex of $\Sigma$ containing $A$. 
        
        \begin{proof}
            Let $\epsilon>0$. Since $E(X)$ is an injective metric space, it follows directly from \cite[Proposition 6.1]{basso2021undistorted} and a simple rescaling argument that there exist a finite simplicial complex $\Sigma'$ of dimension $\leq \dim_N X$ and every simplex in $\Sigma'$ is a Euclidean simplex of side length $\epsilon$ and there are Lipschitz maps $\varphi'\colon X \to \Sigma'$, $\psi\colon \Sigma' \to E(X)$ such that $\varphi'$ is $C$-Lipschitz, $\psi$ is $C$-Lipschitz on each simplex of $\Sigma'$ and $d(x,\psi(\varphi'(x)))\leq C \epsilon$ for each $x \in X$. Here, $\Sigma'$ is equipped with the $l^2$-metric $|\cdot|_{l^2}$ and $C$ depends only on the data of $X$. Furthermore, \cite[Proposition 5.3]{marti2024characterization} implies that there exists a map $p\colon \Sigma' \to \Sigma'$ such that composition $\varphi= p\circ \varphi'$ is Lipschitz and has the following properties; the image $\varphi(X)$ is contained in the $n$-skeleton of $\Sigma'$, $\varphi$ satisfies (2) and $\varphi(x),\varphi'(x)$ are contained in a common simplex for all $x\in X$. Notice that the proposition actually proves a stronger version of (2). Now, let $x \in X$. Then, there exist two $n$-dimensional simplices $\Delta_1, \Delta_2 \subset \Sigma'$ with $\Delta_1\cap \Delta_2 \neq \emptyset$ and such that $\varphi'(x) \in \Delta_1$ and $\varphi(x) \in \Delta_2$. It follows from Lemma \cite[Lemma 2.3]{marti2024characterization} that there exists $z \in \Delta_1\cap \Delta_2$ satisfying
            $$|\varphi'(x)-z|_{l^2}+|z-\varphi(x)|_{l^2} \leq 4 \sqrt{n} |\varphi'(x)-\varphi(x)|_{l^2}.$$
            Since each simplex in $\Sigma'$ has side-length $\epsilon$ and $\psi$ is $C$-Lipschitz on each simplex, we conclude that
            $$d(\psi(\varphi'(x)),\psi(\varphi(x))) \leq d(\psi(\varphi'(x)),\psi(z)) + d(\psi(z),\psi(\varphi(x))) \leq 8 \sqrt{n} C\epsilon.$$
            Therefore, 
            $$d(x,\psi(\varphi(x))) \leq d(x,\psi(\varphi'(x)))+ d(\psi(\varphi'(x)),\psi(\varphi(x))) \leq (1 +8 \sqrt{n})C \epsilon,$$
            for all $x \in X$. Finally, we pick a point $z_i \in \inte\Delta_i \setminus \varphi(X)$ for each $n$-simplex $\Delta_i \subset \Sigma'$ whose interior is not fully covered by $\varphi$. We compose $\varphi$ with a radial projection on each $\Delta_i$ with $z_i$ as the projection center (we denote the resulting map still by $\varphi$). The composition is again Lipschitz and we do not increase $\Ha^n(\varphi(X))$ because the projections centers are not in the image of $\varphi$. Clearly, $\textup{Hull}(\varphi(X))$ is equal to the $n$-skeleton of $\Sigma'$. The same argument as above, using \cite[Lemma 2.3]{marti2024characterization}, shows that $\varphi$ still satisfies the second part of (4). This completes the proof with $\Sigma$ equal to the $n$-skeleton of $\Sigma'$.
        \end{proof}

\section{Metric currents}\label{sec: currents}
    \subsection{Basic definitions}
        In this section we present the theory of metric currents. For more details we refer to \cite{ambrosio-kirchheim-2000} and \cite{lang-currents-2011}. Throughout this section let $X$ be a complete metric space. For $k\in \N$, we let $\mathcal{D}^k(X)= \LIP_b(X) \times \LIP(X)^k$, where $\LIP_b(X)$ denotes the set of bounded Lipschitz functions $X \to \R$.

        \begin{definition}\label{def:metric-current}
            A multilinear map \(T\colon \mathcal{D}^k(X)\to \R\) is called metric \(k\)-current (of finite mass) if the following holds:
            \begin{enumerate}
                \item(continuity) If \(\pi_i^{j}\) converges pointwise to \(\pi_i\) for every \(i=1, \ldots, k\), and \(\Lip \pi_i^{j}< C\) for some uniform constant \(C>0\), then 
                \[
                T(f, \pi_1^{j}, \ldots, \pi_k^{j})\to T(f, \pi_1, \ldots, \pi_k)
                \] as \(j\to \infty\);
                \item(locality) \(T(f, \pi_1, \ldots, \pi_k)=0\) if for some \(i\in\{1, \ldots, k\}\) the function \(\pi_i\) is constant on \(\{ x\in X : f(x) \neq 0\}\);
                \item(finite mass) There exists a finite Borel measure \(\mu\) on \(X\) such that
                \begin{equation}\label{eq:mass-inequality}
                |T(f, \pi_1, \ldots, \pi_k)| \leq \prod_{i=1}^k \Lip(\pi_i) \int_{X} |f(x)| \, d\mu(x)
                \end{equation}
                for all \((f, \pi_1, \ldots, \pi_k)\in \mathcal{D}^k(X)\).
            \end{enumerate}
        \end{definition}

        The minimal measure \(\mu\) satisfying \eqref{eq:mass-inequality} is called the \textit{mass measure} of \(T\) and is denoted by \(\norm{T}\). We define the \textit{support} of \(T\) as
        \[
        \spt T=\big\{ x\in X : \norm{T}(B(x, r)) >0 \text{ for all } r>0\big\}.
        \] 
        We write $\bM_k(X)$ for the vector space of all metric $k$-currents on $X$. Endowed with the mass norm $\bM(T) =\norm{T}(X)$, it is a Banach space. Given a Borel set $B \subset X$, the \textit{restriction} of $T$ to $B$ is defined by 
        $$(T\on B)(f, \pi_1\ldots, \pi_k)=T(\mathbbm{1}_B \cdot f, \pi_1, \ldots, \pi_k).$$
        This is a well-defined metric \(k\)-current because each metric $k$-current can be extended uniquely to $L^1(\norm{T}) \times \LIP^k(X)$. The restriction satisfies \(\norm{T\on B}=\norm{T}\on B\). A sequence of metric $k$-currents $T_i\in \bM_k(X)$ is said to converge weakly to $T \in \bM_k(X)$ if
        $$\lim_{i \to \infty}T_i(f,\pi_1,\dots,\pi_k) = T(f,\pi_1,\dots,\pi_k)$$
        for all $(f,\pi_1,\dots, \pi_{k})\in \mathcal{D}^k(X)$ and we write $T_i \rightharpoonup T$. For $T \in \bM_k(X)$, the \textit{boundary} of $T$ is the multilinear map on $\mathcal{D}^{k-1}(X)$ given by
        $$\partial T (f,\pi_1,\dots,\pi_{k-1}) = T(1,f,\pi_1,\dots,\pi_{k-1})$$
        for all $(f,\pi_1,\dots,\pi_{k-1})\in \mathcal{D}^{k-1}(X)$. If $\partial T$ satisfies \eqref{eq:mass-inequality}, then it is a metric $(k-1)$-current, and we call $T$ a normal $k$-current. The set of all normal $k$-currents in $X$ is denoted by $\bN_k(X)$ and we define the normal mass by $\bN(T) = \bM(T) + \bM(\partial T)$. Again, $\bN_k(X)$ equipped with the normal mass is a Banach space.
        The \textit{pushforward} of $T\in \bM_k(X)$ under a Lipschitz map $\varphi\colon X \to Y$ is the metric $k$-current in $Y$ given by
        $$\varphi_\# T(f,\pi_1,\dots,\pi_k) = T(f\circ \varphi,\pi_1\circ \varphi,\dots,\pi_k\circ\varphi)$$
        for all $(f,\pi_1,\dots, \pi_{k})\in \mathcal{D}^k(Y)$. It is not difficult to verify that the pushforward satisfies the following properties: $(\varphi_\#T)\on B = \varphi_\#(T\on\varphi^{-1}(B))$, the support of $\varphi_\#T$ is contained in the closure of $\varphi(\spt T)$ and $\bM(\varphi_\# T) \leq \Lip(\varphi)^k \bM(T)$.
        
    \subsection{Integer rectifiable and integral currents}
        In Euclidean space, every metric current is given by integration with respect to a $L^1$ function as follows. For $\theta \in L^1(\R^k)$, the metric $k$-current $\bb{\theta} \in \bM_k(\R^k)$ is defined by
        $$\bb{\theta}(f,\pi_1,\dots,\pi_k) = \int_{\R^k} \theta f \det(D \pi) \; d\leb^k$$
        for all $(f,\pi) = (f,\pi_1,\dots,\pi_k) \in \mathcal{D}^k(\R^k)$. Notice that this characterization of metric $k$-currents in $\R^k$ is a very deep result; see \cite{rindler-flat-conj}. An integer rectifiable current is defined in analogy to rectifiable sets, using the definition of currents in Euclidean space described above. More precisely, a $k$-current $T \in \bM_k(X)$ is said to be integer rectifiable if there exist countably many compact sets $K_i \subset \R^k$ and functions $\theta_i \in L^1(\R^k,\Z)$ with $\spt \theta_i \subset K_i$ and bi-Lipschitz maps $\varphi_i \colon K_i \to X$ such that
        $$T=\sum_{i\in \N} \varphi_{i\#}\bb{\theta_i} \quad \text{ and } \quad \mass(T)=\sum_{i\in \N} \mass(\varphi_{i\#}\bb{\theta_i}).$$
        We call a triple $\{\varphi_i,K_i,\theta_i\}$ as above a \textit{parametrization} of $T$.
        The space of integer rectifiable $k$-currents on $X$ is denoted by $\cI_k(X)$. For $T \in \cI_k(X)$, \textit{the characteristic set of} $T$ is defined as
        \begin{equation}\label{eq: deg-charact-set}
            \textup{set}T= \left\{ x \in X \colon \liminf_{r\to 0}\frac{\norm{T}(B(x,r))}{r^k}>0\right\}.
        \end{equation}
       By \cite[Theorem 4.6]{ambrosio-kirchheim-2000}, the characteristic set of $T$ is $k$-rectifiable and $\norm{T}$ is concentrated on $\textup{set}T$. Now, suppose that there exists $C>0$ such that $T$ has a parametrization $\{\varphi_i,K_i,\theta_i\}$ satisfying $|\theta_i(x)| \leq C$ for almost all $x \in K_i$ and every $i \in \N$. It follows from \cite[Lemma 9.2 and Theorem 9.5]{ambrosio-kirchheim-2000} that 
        $$C^{-1} \lambda \Ha^k\on\textup{set}T \leq \norm{T} \leq C \lambda \Ha^k\on\textup{set}T,$$
        where $\lambda\in L^1(X)$ satisfies $k^{-k/2}\leq \lambda \leq k^{k/2}$. In particular, we have $\bM(T) \leq C k^{k/2} \Ha^k(\textup{set} T)$. We say an integer rectifiable current is an integral current if it is also a normal current and let $\bI_k(X) = \cI_k(X) \cap \bN_k(X)$ be the space of integral $k$-currents on $X$. If an integral current $T$ has zero boundary $\partial T = 0$, then we call $T$ an integral cycle. The boundary-rectifiability theorem of Ambrosio and Kirchheim \cite[Theorem 8.6]{ambrosio-kirchheim-2000} implies that if $T\in \bI_k(X)$, then $\partial T \in \bI_{k-1}(X)$. Therefore,
        \[
        \dotsm \overset{\partial_{k+1}}{\longrightarrow} \bI_k(X) \overset{\partial_{k}}{\longrightarrow} \bI_{k-1}(X) \overset{\partial_{k-1}}{\longrightarrow} \dotsm \overset{\partial_1}{\longrightarrow} \bI_0(X) 
        \]
        is a chain complex. For $k \geq 0$, we define the $k$th homology group of this chain complex by $H_k^\textup{IC}(X) =\ker \partial_k / \im \partial_{k+1}$. Finally, let $M$ be a closed, oriented Riemannian $n$-manifold and let 
        \[
        \bb{M}(f,\pi) = \int_Mf\det(D\pi)\,d\hspace{-0.14em}\Ha^n
        \]
        for all \((f, \pi)\in \mathcal{D}^n(M)\). This defines an integral $n$-current on $M$ and by Stokes' theorem $T$ is a cycle. Moreover, this cycle satisfies $\norm{\bb M}=\Ha^n$ and generates the $n$-th homology group via integral currents $H_n^\textup{IC}(X)$. We thus refer to $\bb{M}$ as the fundamental class of $M$.
        

     \subsection{Isoperimetric inequality}
        Let $T \in \bI_k(X)$ be an integral $k$-cycle. We say $U \in \bI_{k+1}(X)$ is a filling of $T$ if $\partial U = T$. Moreover, if
        $$\bM(U) =\inf \{\bM(V) \colon V \in \bI_{k+1}(X), \partial V=T\} $$
       then $U$ is called a minimal filling of $T$. The isoperimetric inequality guarantees the existence of fillings that satisfy a certain mass bound. More precisely, we say $X$ satisfies a $k$-dimensional (Euclidean) isoperimetric inequality if there exists $D_k>0$ such that for every $T \in \bI_{k}(X)$ with $\partial T = 0$, there exists a filling $S \in \bI_{k+1}(X)$ of $T$ satisfying $\bM(S) \leq D_k \bM(T)^{ \frac{k+1}{k} }$. In \cite{wenger-isop}, Wenger proved that metric spaces that satisfy a certain cone inequality also satisfy the isoperimetric inequality for integral currents. This includes, for example, all Banach spaces and injective metric spaces. We refer to \cite[Theorem 3.4]{BMW} for the version of the isoperimetric inequality presented here. 

        \begin{theorem}\label{thme: filling-inj-spaces}
            Let \(k\geq 0\) be an integer. Then there exists a constant \(D=D_k\geq 1\) such that the following holds. If \(Y\) is an injective metric space and \(T\in \bI_{k}(Y)\) a cycle, then there exists a minimal filling \(U\in \bI_{k+1}(Y)\) of \(T\) and any such filling satisfies
            \begin{equation}\label{eq:filling-estis}
            \mass(U) \leq D \mass(T)^{\frac{k+1}{k}} \quad \text{ and } \quad \norm{U}(B(y, r))\geq D^{-k}r^{k+1}
            \end{equation}
            for each \(y\in \spt U\) and all \(r \in (0, d(y, \spt T))\). In particular, \(\spt U\) is contained in the \(R\)-neighborhood of \(\spt T\) for \(R=D\mass(U)^{\frac{1}{k+1}}\).
        \end{theorem}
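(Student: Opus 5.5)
The plan has four steps: first the mass bound from a cone construction, then existence of a minimal filling by compactness plus a retraction, then the density bound by slicing and minimality, and finally the support statement as a consequence.

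\emph{Mass bound.} An injective metric space $Y$ admits a conical geodesic bicombing. Coning off a cycle with this bicombing gives a cone inequality: every cycle $T\in\bI_k(Y)$ has a filling of mass at most $c_k\,\diam(\spt T)\,\mass(T)$. By Wenger's theorem \cite{wenger-isop}, a cone inequality implies the Euclidean isoperimetric inequality. So every cycle $T$ has some filling $V$ with $\mass(V)\le D_k'\mass(T)^{\frac{k+1}{k}}$, and the constant depends only on $k$. Once a minimal filling $U$ exists, it satisfies $\mass(U)\le\mass(V)$, which gives the first estimate in \eqref{eq:filling-estis}. The case $k=0$ needs a separate word, with the exponent read appropriately: a $0$-cycle is an integer combination of point pairs, and it is filled by geodesic segments.

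\emph{Existence of a minimal filling.} I expect this to be the main obstacle, because $Y$ need not be locally compact, so the Ambrosio--Kirchheim compactness theorem does not apply directly. I would proceed as follows.
\begin{enumerate}
\item Embed $Y$ isometrically into $l^\infty$, which is a dual space.
\item Take a minimizing sequence of fillings $U_j$. The $U_j$ have bounded mass and fixed boundary $T$.
\item Apply the compactness theorem for integral currents in dual spaces (Ambrosio--Kirchheim, weak$^*$ version) to get a limit $U'\in\bI_{k+1}(l^\infty)$ with $\partial U'=T$. Lower semicontinuity of mass gives $\mass(U')\le\liminf_j\mass(U_j)$.
\item Since $Y$ is injective, there is a $1$-Lipschitz retraction $\pi\colon l^\infty\to Y$. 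Set $U=\pi_\# U'$. Then $\partial U=\pi_\# T=T$ and $\mass(U)\le\mass(U')$.
\end{enumerate}
Hence $U$ is a minimal filling in $Y$.

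\emph{Density bound.} Fix a minimal filling $U$, a point $y\in\spt U$, and $0<s<d(y,\spt T)$. Let $\rho=d(y,\cdot)$ and $f(s)=\norm{U}(B(y,s))$. By slicing theory, for a.e.\ such $s$ the current $U\on B(y,s)$ is integral with boundary equal to the slice $\langle U,\rho,s\rangle$. The slice contains no piece of $T$, since $T$ is supported outside $B(y,s)$. It satisfies $\mass\langle U,\rho,s\rangle\le f'(s)$ for a.e.\ $s$.
\begin{enumerate}
\item Replace $U\on B(y,s)$ by a filling of the slice given by the first step. Minimality of $U$ then gives $f(s)\le D\,f'(s)^{\frac{k+1}{k}}$.
\item Because $y\in\spt U$, we have $f(s)>0$ for all $s>0$.
\item Rewrite the inequality as $\big(f^{\frac{k}{k+1}}\big)'\ge \frac{k}{k+1}D^{-\frac{k}{k+1}}$ a.e. Since $f$ is monotone, integrating from $0$ to $r$ yields $f(r)\ge c_k D^{-k}r^{k+1}$.
\end{enumerate}
Enlarging $D$ absorbs $c_k$ into the stated form $\norm{U}(B(y,r))\ge D^{-k}r^{k+1}$.

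\emph{Support statement.} Suppose some $y\in\spt U$ had $d(y,\spt T)>R$, where $R=D\mass(U)^{\frac{1}{k+1}}$. Choose $r$ slightly larger than $R$ with $r<d(y,\spt T)$. The density bound then gives
\[
\mass(U)\ge\norm{U}(B(y,r))\ge D^{-k}r^{k+1}>D^{-k}D^{k+1}\mass(U)=D\,\mass(U)\ge\mass(U),
\]
which is a contradiction. Hence $\spt U\subset N_R(\spt T)$.
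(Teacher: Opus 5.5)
Your proposal follows the standard route behind this result. The paper does not prove the theorem itself but imports it from \cite[Theorem 3.4]{BMW}. That route consists of Wenger's isoperimetric inequality \cite{wenger-isop}, obtained from the cone inequality given by a conical bicombing; a minimal filling from the Ambrosio--Kirchheim solution of the Plateau problem in duals of separable Banach spaces, followed by a $1$-Lipschitz retraction onto $Y$; and the usual slicing comparison for the density bound. Three steps of your write-up need repair.

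\emph{Exponent in the density step.} Your differential inequality uses the wrong power. From $f\le D (f')^{\frac{k+1}{k}}$ you get $f'\ge D^{-\frac{k}{k+1}}f^{\frac{k}{k+1}}$, that is,
\[
\big(f^{\frac{1}{k+1}}\big)'\geq \tfrac{1}{k+1}D^{-\frac{k}{k+1}} \quad \text{a.e.}
\]
Integrating is legitimate because $f^{\frac{1}{k+1}}$ is nondecreasing, so its a.e.\ derivative integrates to at most its increment. This gives $f(r)\ge (k+1)^{-(k+1)}D^{-k}r^{k+1}$. The inequality for $\big(f^{\frac{k}{k+1}}\big)'$ that you wrote does not follow from the comparison. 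Integrating it would only give growth of order $r^{\frac{k+1}{k}}$, not $r^{k+1}$.

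\emph{The case $k=0$.} The theorem includes $k=0$, where $\frac{k+1}{k}$ is meaningless and your density argument says nothing. Argue directly instead. For every $s<d(y,\spt T)$ with $U\on B(y,s)$ integral, the slice $\partial(U\on B(y,s))$ is nonzero. Otherwise $U-U\on B(y,s)$ would be a filling of $T$ of mass $\mass(U)-\norm{U}(B(y,s))$, and minimality would force $\norm{U}(B(y,s))=0$, contradicting $y\in\spt U$. A nonzero integral $0$-current has mass at least $1$, so $f'\ge 1$ a.e.\ and $\norm{U}(B(y,r))\ge r$. This is the claimed bound with $D^{-0}=1$.

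\emph{Separability in the existence step.} For non-separable $Y$, the space $l^\infty(Y)$ has the non-separable predual $l^1(Y)$. The Ambrosio--Kirchheim weak$^*$ compactness theorem then does not apply as you stated it. Instead, embed the separable closed set $\overline{\bigcup_j \spt U_j}$ (which contains $\spt T$) isometrically into $l^\infty=(l^1)^*$. Then use injectivity of $Y$ to extend the inverse of this embedding to a $1$-Lipschitz map $l^\infty\to Y$, and push the limit forward by it. In this paper the theorem is only applied to $Y=E(X)$ with $X$ compact, where the issue does not arise.
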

        
        If \(k=0\) then the first inequality in \eqref{eq:filling-estis} is understood as an empty statement.

    \subsection{Flat currents modulo $p$}\label{sec: flat-currents}
        We follow the theory of flat currents modulo $p$ introduced in \cite{Ambrosio-Katz} and \cite{Ambrosio-Wenger}. For $k\geq 0$, we denote by $\cF_k(X)$ the space of flat currents in $X$, that is, the currents that can be written as $T = R + \partial S$, where $R \in \cI_k(X)$ and $S\in \cI_{k+1}(X)$. Clearly, this defines an additive group. If $T$ is a flat $k$-current of the form $T = R + \partial S$ with  $R \in \cI_k(X)$ and $S\in \cI_{k+1}(X)$, then its boundary $\partial T$ is the flat $(k-1)$-current given by $\partial T = \partial R$. Notice that a flat current does not need to have finite mass as defined in \eqref{eq:mass-inequality}. Thus, the mass norm does not provide a good notion of distance in this space. Instead, we consider the flat norm
        $$\cF(T) = \inf\{ \bM(R) +\bM(S) \colon T = R +\partial S, R \in \cI_k(X), S\in \cI_{k+1}(X)\}.$$
        It follows from the subadditivity of $\cF$ and the completeness of $\cI_k(X)$ with respect to the mass norm that $\cF$ defines a complete metric on $\cF_k(X)$. We have $\cF(\partial T) \leq \cF(T)$ for all  $T\in \cF_k(X)$. Let $p>1$  be an integer. We define the flat norm modulo $p$ of $T \in \cF_k(X)$ as
        $$\cF_p(T) = \inf\{\cF(T-pQ) \colon Q \in \cF_k(X)\}.$$
        For $T,T'\in \cF_k(X)$, we write $T = T' \mod p$ whenever $\cF_p(T-T') = 0$. In particular, if there exists $S\in \cF_k(X)$ such that $T = T' + pS$, then $T=T'\mod p$. Given $T \in \cF_k(X)$, we define the \textit{(relaxed) $p$-mass of} $T$ by 
        $$\bM_p(T) = \inf\left\{  \liminf_{i \to \infty} \bM(T_i)\colon T_i \in \cI_k(X), \cF_p(T-T_i) \to 0\right\}.$$
        Clearly, $\bM_p(T) = \bM_p(T')$ if $T = T' \mod p$. A direct computation shows that the $p$-mass is subadditive and lower semi-continuous with respect to $\cF_p$-convergence. Moreover, we have $\cF_p(T) \leq \bM_p(T) \leq \bM(T)$ for all $T \in \cI_k(X)$. Let $\varphi\colon X \to Y$ be a Lipschitz map between two complete metric spaces and $T \in \cF_k(X)$. Then the pushforward $\varphi_\#T$ is a flat current in $Y$ satisfying
        $$\cF_p(\varphi_\#T) \leq \Lip(\varphi)^k  \cF_p(T) \quad \textup{and} \quad  \bM_p(\varphi_\#T) \leq \Lip(\varphi)^k \bM_p(T).$$

    \medskip

        
        Now, let $E$ be a compact and convex subset of a Banach space. Furthermore, let $T \in \cF_k(E)$ be a flat $k$-current with finite $\bM_p$-mass and let $\pi \colon E \to \R$ be Lipschitz. It was shown in \cite{Ambrosio-Katz} that for almost all $r \in \R$, the restriction $T\on \{\pi < r\}$ is a well-defined flat $k$-current and there exists a finite, non-negative and $\sigma$-additive Borel measure $\norm{T}_p$ such that 
        $$\bM_p(T\on \{\pi < r\}) = \norm{T}_p(\{\pi < r\})$$
        for almost all $r \in \R$. We refer to $\norm{T}_p$ as the $p$-mass measure of $T$. Exactly as in \cite[Section 2.4]{Ambrosio-Wenger}, we can extend this equality to closed and open subsets of $E$. Notice that in \cite{Ambrosio-Wenger}, it is assumed that the ambient Banach space satisfies a strong finite-dimensional approximation property. However, in the argument therein, it is only used that $E$ is compact and that $\bI_k(E)$ is dense in $\cI_k(E)$ and $\cF_k(E)$, which is true for closed and convex subsets of a Banach space; see \cite[Proposition 14.7]{Ambrosio-Katz}. For an open or closed subset $A\subset E$, the restriction $T\on A$ is a flat $k$-current satisfying
        $$\bM_p(T\on A) = \norm{T}_p(A) \quad \textup{and} \quad T = T\on A + T \on(E\setminus A).$$
        It follows from the construction that there exists a sequence $T_n \in \cI_k(E)$ such that for all closed or open sets $A \subset E$ we have $\cF_p(T_n\on A -T\on A ) \to 0$ and $\bM(T_n\on A) \to \bM(T\on A)$ as $n \to \infty$. Using this, one easily proves that $\norm{T\on A}_p = \norm{T}_p \on A$ for all closed and open sets $A \subset E$ and
        $$\norm{\varphi_\#T}_p \leq \Lip(\varphi)^k \varphi_\#\norm{T}_p$$
        for all Lipschitz maps $\varphi\colon E \to F$, where $F$ is a compact and convex subset of a Banach space. We define 
        $$\spt_p T = \{x \in E \colon \norm{T}_p(B(x,r)) >0 \textup{ for all }r>0\}.$$
        By the above, we have $\spt_p(\varphi_\#T) \subset \varphi(\spt_p(T))$.

        \medskip
        

         We also need the reduction of an integer rectifiable current modulo $p$. Let $T \in \cI_k(X)$ and let $\{\varphi_i,K_i,\theta_i\}$ be a parameterization of $T$. A \textit{reduction modulo} $p$ \textit{of} $T$ is any integer rectifiable $k$-current $T^p$ with a parametrization $\{\varphi_i,K_i,\eta_i\}$ that satisfies $\eta_i(x) = \theta_i(x) \mod p$ for almost all $x \in K_i$ and every $i \in \N$. Notice that a reduction $T^p$ modulo $p$ of $T$ is not unique. However, we always have $T = T^p \mod p$ and $\bM(T^p)$ is uniquely determined. Since $T^p$ is given by a parametrization $\{\varphi_i,K_i,\eta_i\}$ satisfying $|\eta_i(x)| \leq p$ for almost all $x \in K_i$ and every $i \in \N$, we have $\bM(T^p) \leq p k^{k/2} \Ha^k\on\textup{set}T$. Notice that $\textup{set}T^p \subset \textup{set}T$. If $X$ is a compact length space, then \cite[Theorem 10.5]{Ambrosio-Katz} implies that $\bM_p(T) = \bM(T^p)$ for each reduction $T^p$ of $T$ modulo $p$. Using the injective hull $E(X)$, we can extend this to compact metric spaces.

         \begin{lemma}\label{lemma: mass-equal-mod-mass-for-reduction}
             Let $X$ be a compact metric space and $T \in \cI_k(X)$. Then $\bM_p(T) = \bM(T^p)$, where $T^p \in \cI_k(X)$ is any of $T$ modulo $p$. In particular, there exists a finite, non-negative and $\sigma$-additive Borel measure $\norm{T}_p$ such that
             $$\bM_p(T\on B) = \norm{T}_p(B) = \norm{T^p}(B)$$
             for every Borel set $B \subset X$.
         \end{lemma}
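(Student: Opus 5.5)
The plan is to transfer the problem into the injective hull $E(X)$, which is compact and a length space, apply \cite[Theorem 10.5]{Ambrosio-Katz} there, and then transport the conclusion back to $X$.

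\emph{Step 1: reduce to $E(X)$.} Let $\iota\colon X\to E(X)$ be the isometric embedding. Since $X$ is compact, $E(X)$ is compact. It is also geodesic, hence a length space, and it is a $1$-Lipschitz retract of $l^\infty$. Put $T'=\iota_\# T\in\cI_k(E(X))$. Since $\iota$ is an isometric embedding, any parametrization $\{\varphi_i,K_i,\theta_i\}$ of $T$ gives the parametrization $\{\iota\circ\varphi_i,K_i,\theta_i\}$ of $T'$. Consequently $\iota_\# T^p$ is a reduction of $T'$ modulo $p$, and $\bM(\iota_\#T^p)=\bM(T^p)$. By \cite[Theorem 10.5]{Ambrosio-Katz},
\[
\bM_p(T')=\bM(\iota_\# T^p)=\bM(T^p).
\]

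\emph{Step 2: compare $\bM_p(T)$ and $\bM_p(T')$.} First, $\bM_p(T')\le \bM_p(T)$, because $\bM_p$ does not increase under $1$-Lipschitz pushforward. For the reverse inequality we cannot simply retract $E(X)\to X$, since no Lipschitz retraction need exist; this is the main obstacle.

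For $\bM_p(T)\le\bM_p(T')$ it is more direct to argue $\bM_p(T)\le \bM(T^p)$, using the competitor sequence $T_i=T^p$ in the definition of the relaxed mass. This works because $T=T^p \bmod p$, which holds by the definition of reduction: $T-T^p=pQ$ with $Q\in\cI_k(X)$, parametrized by $(\theta_i-\eta_i)/p$. So
\[
\bM_p(T)\le\bM(T^p)=\bM_p(T')\le\bM_p(T),
\]
and equality holds throughout. Thus Step 1 is only needed for the lower bound, and the obstacle reduces to the inequality $\bM_p(\iota_\# S)\le \bM_p(S)$. This is the stated Lipschitz pushforward estimate with constant $1$, so nothing beyond the earlier results is required.

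\emph{Step 3: the measure.} Define $\norm{T}_p:=\norm{T^p}$. This is independent of the choice of reduction: any two reductions have multiplicities agreeing mod $p$ and lying in a fixed residue range, so $|\eta_i|$ agrees up to the choice of representative. The clean way to see independence is to apply Step 2 to $T\on B$ for Borel $B$. Note that $T^p\on B$ is a reduction of $T\on B$, since one restricts the parametrization to $K_i\cap\varphi_i^{-1}(B)$. Then
\[
\bM_p(T\on B)=\bM(T^p\on B)=\norm{T^p}(B).
\]
The left-hand side does not involve the choice of $T^p$, which gives independence. $\sigma$-additivity and finiteness are inherited from $\norm{T^p}$. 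Finally, one checks that this is consistent with the $p$-mass measure of $\iota_\#T$ constructed in $E(X)$, via the equality on open and closed sets.
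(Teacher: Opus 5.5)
Your proposal is correct and follows essentially the paper's own argument: the chain $\bM_p(T)\le \bM(T^p)=\bM(\iota_\# T^p)=\bM_p(\iota_\# T)\le \bM_p(T)$, which uses \cite[Theorem 10.5]{Ambrosio-Katz} in the compact geodesic space $E(X)$ together with the $1$-Lipschitz pushforward bound, and then the same identity applied to $T\on B$ with the reduction $T^p\on B$, so that $\norm{T}_p:=\norm{T^p}$ does not depend on the chosen reduction. As you yourself conclude, no retraction $E(X)\to X$ is needed, because the upper bound comes directly from the constant competitor $T^p$ in the definition of $\bM_p(T)$.
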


        The measure $\norm{T}_p$ coincides with the $p$-mass measure introduced above.
        
         \begin{proof}
            Let $\iota\colon X \to E(X)$ be an isometric embedding. Recall that $E(X)$ is compact because $X$ is compact. Furthermore, $E(X)$ is geodesic since it is an injective metric space. Let $T^p \in \cI_k(X)$ be any reduction of $T$ modulo $p$. Notice that $\iota_\# T^p$ is also a reduction modulo $p$ of $\iota_\#T$. Therefore,
            $$\norm{T^p}(E) = \norm{\iota_\#T^p}(E(X)) = \bM_p(\iota_\# T)\leq \bM_p(T).$$
            Since $T = T^p \mod p$ we have $\bM_p(T) = \bM_p(T^p) \leq \bM(T^p)$ and thus, $\bM_p(T) = \bM(T^p)$. Now, let $B \subset X$ be Borel. Then $T^p \on B$ is a reduction modulo $p$ of $(T\on B)$. Hence, the first part of the proof implies that $\bM(T^p\on B) = \norm{T^p}(B)$. It follows that $\norm{T}_p  \coloneqq \norm{T^p}$ is a well-defined, finite, non-negative and $\sigma$-additive Borel measure satisfying $\bM_p(T\on B) = \norm{T}_p(B) = \norm{T^p}(B)$ for every Borel set $B \subset X$.
         \end{proof}

         As a consequence of the previous lemma, we obtain the following equivalent characterization of the flat norm modulo $p$ in a compact metric space $X$. If $T \in \cF_k(X)$, then $\cF_p(T)$ is equal to
         $$\inf\{ \bM_p(R) +\bM_p(S) \colon T = R +\partial S + pQ, R \in \cI_k(X), S\in \cI_{k+1}(X), Q \in \cF_k(X)\}.$$  
         We conclude the section with the description of the fundamental class modulo $2$ of a closed, non-orientable Riemannian $n$-manifold $M$. Similarly as in the orientable case $M$ supports an integral current $\bb{M}\in \bI_n(M)$ satisfying $\partial \bb{M}=0 \mod 2$ and $\norm{\bb{M}}_2 = \Ha^n$; see e.g. \cite[Theorem 13.1]{Ambrosio-Katz}. We call $\bb{M}$ the fundamental class modulo $2$ of $M$. Let $\tau$ is any Borel choice of orthonormal bases spanning the tangent spaces of $M$.  Then, the integer rectifiable current defined by
            
        $$(f,\pi_1,\dots,\pi_n) \mapsto \int_M f \; \langle \tau, D\pi\rangle \; d\Ha^n$$
            
        for all $(f,\pi_1,\dots,\pi_n) \in \mathcal{D}^n(M)$ is equivalent to $\bb{M}$ modulo $2$. The fundamental class modulo $2$ is unique in the following sense. Whenever $T \in \cI_n(X)$ is an integer rectifiable current satisfying $\partial T  = 0 \mod 2$, then $T = k \cdot \bb{M} \mod 2$ for $k$ equal to either $0$ or $1$.

    \subsection{Slicing}\label{sec: slicing}
    Let $\pi \colon X \to \R$ be a Lipschitz function. For an integral current $T\in \bI_{k}(X)$, the slice of $T$ at $t \in \R$ is defined as
        \begin{equation}\label{eq: def-slice}
            \langle T,\pi ,t\rangle = \partial ( T\on\{\pi <t\}) - (\partial  T)\on\{\pi <t\}.
        \end{equation}
        It follows that for almost all $t \in \R$ the slice $\langle T,\pi ,t\rangle$ is an integral $(k-1)$-current satisfying $\spt \langle T,\pi ,t\rangle \subset \spt T \cap \pi^{-1}(t)$ and
        $$\int_r^s \norm{\langle T,\pi ,t\rangle}(B) \;dt\leq \Lip(\pi) \cdot \norm{T}(B\cap \{r<\pi<s\}),$$
        for every Borel $B \subset X$ and all $r,s\in \R$ with $r<s$; see \cite[Theorem 5.6 and Theorem 5.7]{ambrosio-kirchheim-2000}. We refer to this inequality as the \textit{slicing inequality}. We have $\partial\langle T,\pi ,t\rangle = - \langle \partial T,\pi ,t\rangle$ and in particular, if $T $ has zero boundary, then $\partial \langle T,\pi ,t\rangle =0$. We will need the slicing inequality for the $p$-mass measure.

        \begin{lemma}\label{lemma: slicing-mod-p}
            Let $E$ be a compact and convex subset of $l^\infty$. Furthermore, let $T \in \bI_k(E)$ and let $\pi\colon E \to \R$ be Lipschitz. Then, 
            $$\int_r^s \norm{\langle T,\pi ,t\rangle}_p(B) \;dt\leq \Lip(\pi) \cdot \norm{T}_p(B\cap \{r<\pi<s\}),$$
            for every Borel $B \subset X$ and all $r,s\in \R$ with $r<s$.
        \end{lemma}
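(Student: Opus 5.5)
The plan is to reduce the inequality to the ordinary slicing inequality for a suitably chosen integral current that agrees with $T$ modulo $p$. The key observation is that slicing commutes with reduction modulo $p$ at the level of parametrizations. By Lemma \ref{lemma: mass-equal-mod-mass-for-reduction}, $\norm{T}_p=\norm{T^p}$ for any reduction $T^p$ of $T$ modulo $p$. Likewise, $\norm{\langle T,\pi,t\rangle}_p=\norm{\langle T,\pi,t\rangle^p}$ for a.e.\ $t$, since the slice is integral for a.e.\ $t$. So it suffices to show that for a.e.\ $t$ the slice $\langle T^p,\pi,t\rangle$ is (up to a correction) a reduction of $\langle T,\pi,t\rangle$ modulo $p$, or at least that
$$\norm{\langle T,\pi,t\rangle}_p\leq \norm{\langle T^p,\pi,t\rangle}_p\leq \norm{\langle T^p,\pi,t\rangle}$$
as measures. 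We then apply the classical slicing inequality from Section \ref{sec: slicing} to $T^p$.

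The main obstacle is that $T^p$ need not be an integral current: its boundary may have infinite mass, so the slice of $T^p$ is not covered by the stated theorem. To handle this, I would write $T=T^p+pQ$ with $Q=(T-T^p)/p\in\cI_k(E)$, which is integer rectifiable since the multiplicities differ by multiples of $p$. Then I would use the slicing theory for integer rectifiable (or flat) currents in $E$. Ambrosio–Kirchheim slicing via the coarea formula works for rectifiable currents with finite mass: for a.e.\ $t$, $\langle R,\pi,t\rangle\in\cI_{k-1}$, slicing is additive, and $\int_r^s\norm{\langle R,\pi,t\rangle}(B)\,dt\le \Lip(\pi)\norm{R}(B\cap\{r<\pi<s\})$ holds for every $R\in\cI_k$. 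Defining slices of flat currents by $\langle R+\partial S,\pi,t\rangle=\langle R,\pi,t\rangle-\partial\langle S,\pi,t\rangle$, linearity gives
$$\langle T,\pi,t\rangle=\langle T^p,\pi,t\rangle+p\langle Q,\pi,t\rangle$$
for a.e.\ $t$. Hence $\langle T,\pi,t\rangle=\langle T^p,\pi,t\rangle \bmod p$.

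The final step uses the locality $\norm{R\on B}_p=\norm{R}_p\on B$, first for open and closed sets and then for Borel sets by regularity. With the Lemma applied to the integer rectifiable current $\langle T^p,\pi,t\rangle$, this gives
$$\norm{\langle T,\pi,t\rangle}_p(B)=\norm{\langle T^p,\pi,t\rangle}_p(B)\le\norm{\langle T^p,\pi,t\rangle}(B).$$
Integrating over $t\in(r,s)$ and applying rectifiable slicing to $T^p$ yields the bound $\Lip(\pi)\norm{T^p}(B\cap\{r<\pi<s\})=\Lip(\pi)\norm{T}_p(B\cap\{r<\pi<s\})$. If rectifiable slicing with the mass inequality is not available directly, a fallback is to approximate $T^p$ by integral currents in the convex set $E$ (using density, \cite[Proposition 14.7]{Ambrosio-Katz}) with converging masses, and pass to the limit using lower semicontinuity of $\bM_p$ under $\cF_p$-convergence of slices together with Fatou's lemma. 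Either way, the essential point is verifying the identity modulo $p$ for slices and the measurability in $t$.
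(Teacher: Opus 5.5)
Your proposal is correct and follows essentially the paper's route. You pass to a reduction $T^p$, relate the slices of $T$ and $T^p$ modulo $p$ through Ambrosio--Kirchheim's slicing of integer rectifiable currents, and then combine Lemma~\ref{lemma: mass-equal-mod-mass-for-reduction} with the slicing inequality applied to $T^p$.

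The only difference is in how the slices are compared. The paper invokes \cite[Theorem~9.7]{ambrosio-kirchheim-2000} to get that $\langle T^p,\pi,t\rangle$ is itself a reduction of $\langle T,\pi,t\rangle$ for a.e.\ $t$, so the integrands agree, which also gives measurability in $t$. You only derive congruence modulo $p$ by linearity and then use $\norm{\cdot}_p\leq\norm{\cdot}$, which is enough for the inequality.
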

        
        The lemma is known to be true in case $l^\infty$ is replaced by a separable Banach space; see \cite[Theorem 10.6]{Ambrosio-Katz}. Notice that $E(X)$ satisfies the conditions in the lemma in case $X$ is compact. 

        \begin{proof}

           Let $T^p\in \cI_k(E)$ be a reduction of $T$ modulo $p$. It is a direct consequence of \cite[Theorem 9.7]{ambrosio-kirchheim-2000} that for almost all $t \in \R$, the slice $\langle T^p,\pi ,t\rangle$ is a reduction modulo $p$ of $\langle T,\pi ,t\rangle$. Therefore, by Lemma \ref{lemma: mass-equal-mod-mass-for-reduction}, we have
           \begin{align*}
               \int_r^s \norm{\langle T,\pi ,t\rangle}_p(B)\;dt &= \int_r^s \norm{\langle T^p,\pi ,t\rangle}(B)\;dt
               \\
               &\leq \norm{T^p}(B\cap \{r<\pi<s\})
               \\
               &= \norm{T}_p(B\cap \{r<\pi<s\})
           \end{align*}
            for every Borel $B \subset X$ and all $r,s\in \R$ with $r<s$. This completes the proof.
        \end{proof}

    \subsection{Product currents and cone constructions}
        We equip $[0,1]\times E$ with the Euclidean product metric. Given a function $f\colon [0,1]\times X\to \R$ and $t\in[0,1]$, we let $f_t\colon X\to \R, \; x \mapsto f(t,x)$. For $T\in\bI_k(X)$, we define the integral current $\bb{t}\times T\in \bI_k([0,1]\times E)$ by
        $$\bb{t}\times T(f,\pi_1,\dots, \pi_k):= T(f_t, \pi_{1t},\dots,\pi_{kt})$$
        for all $(f,\pi_1,\dots, \pi_k) \in \mathcal{D}^k([0,1]\times E)$. Furthermore, the multi-linear functional $\bb{0,1}\times T$ on $\mathcal{D}^{k+1}([0,1]\times X)$ assigning 
        \begin{align*}
            (f,\pi_1,\dots, &\pi_{k+1})\mapsto 
            \\
            &\sum_{i=1}^{k+1}\int_0^1(-1)^{k+1}T\left(f_t\frac{\partial \pi_{it}}{\partial t},\pi_{1t},\dots, \pi_{(i-1)t}, \pi_{(i+1)t},\dots, \pi_{(k+1)t}\right)\,dt
        \end{align*}
        defines an element of $\bI_{k+1}([0,1]\times X)$ and satisfies 
        \begin{equation}\label{eq: homotopy-boundary}
            \partial(\bb{0,1}\times T) + \bb{0,1}\times\partial T = \bb{1}\times T - \bb{0}\times T.
        \end{equation}
        See \cite[Theorem 2.9]{ambrosio-kirchheim-2000} and \cite[Theorem 2.9]{wenger-isop}.

       \medskip 
        
        Now, assume that $E$ is a closed and convex subset of a Banach space. Then, $\bI_k(E)$ is dense in $\cI_k(E)$ and $\cF_k(E)$ with respect to the mass norm and flat norm, respectively. It follows that
        $$\cF(T) = \inf \{ \bM(R) +\bM(S) \colon T = R + \partial S, R \in \bI_k(E), S \in \bI_{k+1}(E)\} $$
        for all $T \in \cF_k(E)$. Moreover, by \eqref{eq: homotopy-boundary} we have
        $$\bb{0,1}\times (R\times \partial S) = \bb{0,1}\times R + \bb{1}\times S-\bb{0}\times S- \partial(\bb{0,1}\times S),$$
        for all $R \in \bI_k(E)$ and every $S \in \bI_{k+1}(E)$. Therefore, using a density argument, we can define $\bb{t}\times T$ and $\bb{0,1}\times T$ for flat currents $T \in \cF_k(E)$ in such a way that \eqref{eq: homotopy-boundary} still holds. It is not difficult to show that for $T,S\in \cF_k(E)$ with $T = S \mod p$ we have
        \begin{equation}\label{eq: product-current-mod-p}
            \bb{t}\times T = \bb{t}\times S \mod p \quad \textup{and} \quad \bb{0,1}\times T= \bb{0,1}\times S \mod p.
        \end{equation}
        Let $H \colon [0,1] \times E \to X$ be Lipschitz such that for a fixed $t \in [0,1]$ the map $x \mapsto H(t,x)$ is $L$-Lipschitz and for a fixed $x\in E$ the map $t \mapsto H(t,x)$ is $\gamma$-Lipschitz. A computation exactly as in the proof of \cite[Proposition 2.10]{wenger-isop} shows that 
        \begin{equation}\label{eq: mass-homotopy}
            \bM(H_\#(\bb{0,1}\times T)) \leq (k+1) \gamma L^k \bM(T),
        \end{equation}
        for all $T \in \bI_k(E)$. It follows that if $T \in \cF_k(E)$ has finite $p$-mass $\bM_p$, then \eqref{eq: mass-homotopy} holds for $T$ with $\bM$ replaced by $\bM_p$. Using this, it is not difficult to show that 
        \begin{equation}\label{eq: flat-homotopy}
            \cF_p(H_\#(\bb{0,1}\times T)) \leq  (k+1) \gamma L^k \cF_p(T)
        \end{equation}
        for all $T \in \cF_k(E)$ with finite $p$-mass $\bM_p$. Next, we explain the cone construction. We refer to \cite{wenger-isop} for more details. Fix some point $x\in E$. We denote by $H\colon [0,1]\times E \to E$ the straight-line homotopy between the identity on $E$ and the constant map equal to $x$. For $T \in \bI_k(E)$, the cone over $T$ with vertex $x$ is defined as $\{x\} \times T \coloneqq H_\#(\bb{0,1}\times T) \in \bI_{k+1}(E)$. By \eqref{eq: homotopy-boundary} and \eqref{eq: mass-homotopy} it has the following properties
        $$\partial (\{x\} \times T) =  T - \{x\} \times \partial T \quad\textup{and}\quad \bM_p(\{x\}\times T) \leq 2r \bM_p(T),$$
        where $r>0$ is the radius of the smallest closed ball that contains the support of $T$. Moreover, if $T,R\in \bI_k(E)$ and $S\in \bI_{k+1}(E)$ are such that $T = R + \partial S$, then
        $$\{x\}\times T = \{x\}\times (R +\partial S) = \{x\}\times (R+S) + \partial(\{x\}\times S).$$
        Therefore, $\cF(\{x\}\times T) \leq 2 \diam (E) \cF(T)$. This leads to
        $$\cF_p(\{x\}\times T) \leq 2\diam( E) \cF_p(T)$$
        for all $T \in \cF_k(E)$. In particular, if a sequence $T_i \in \cF_k(E)$ satisfies $\cF_p(T_i-T) \to 0$ for some $T \in \cF_k(E)$, then also the cones converge $\cF_p(\{x\}\times T_i -\{x\}\times T) \to 0$.

    \subsection{Compactness theorem for integral currents mod $p$}
       We essentially follow the proof of the compactness theorem in \cite{Adams-chains}. The definition of flat chains used in \cite{Adams-chains} differs from the definition of flat currents used in this article. Thus, we provide the proof of the compactness theorem.

    \begin{theorem}\label{thme: compactness-flat-mod-p}
        Let $E$ be a compact and convex subset of $l^\infty$. Suppose that $T_i\in \bI_{k}(E)$ is a sequence satisfying
        $$\sup_i \bM_p(T_i) + \bM_p(\partial T_i) < \infty.$$
        Then, there exists a subsequence $\big(T_{i(j)}\big)_{j \in \N}$ of $(T_i)_{i\in \N}$ and $T\in \cF_k(E)$ such that $\cF_p(T_{i(j)}-T) \to 0$ as $j \to \infty$.
    \end{theorem}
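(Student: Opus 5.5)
Since $(\cF_k(E),\cF_p)$ is complete (modulo the identification $T=T'\mod p$, using that $\cF$ is a complete metric on $\cF_k(E)$), it suffices to show that the sequence is totally bounded for $\cF_p$: for every $\delta>0$ the set $\{T_i\}$ is covered by finitely many $\cF_p$-balls of radius $\delta$. A diagonal argument then produces a $\cF_p$-Cauchy subsequence, whose limit $T\in\cF_k(E)$ is the required current. Following Adams' argument, I would prove total boundedness by a deformation-type approximation. Fix $\delta>0$ and $\epsilon>0$ small. Since $E$ is compact, it lies within $\epsilon$ of a finite-dimensional affine subspace; more concretely, pick a finite $\epsilon$-net and a finite-dimensional polyhedral structure. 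The plan is to replace each $T_i$ by a current $P_i$, with $\cF_p(T_i-P_i)\le C\epsilon\,(\bM_p(T_i)+\bM_p(\partial T_i))$, where each $P_i$ is an integral polyhedral chain in a fixed finite complex whose multiplicities are bounded. This uses only finitely many possibilities modulo $p$, because multiplicities can be reduced to $\{0,\dots,p-1\}$.

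The key steps are as follows.

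\textbf{Step 1: projection to a finite-dimensional set.} Choose a Lipschitz map $\Phi\colon E\to E$ with $\Lip\Phi\le C$, $d(\Phi,\id)\le\epsilon$, and image in a finite-dimensional compact convex polytope $K\subset E$. One way is a partition of unity subordinate to an $\epsilon$-net $x_1,\dots,x_N$, with $\Phi(x)=\sum\lambda_j(x)x_j$. This uses convexity and compactness, although the Lipschitz constant of $\Phi$ depends on $\epsilon$. That dependence is harmless, because only finiteness is needed at fixed $\epsilon$. The straight-line homotopy $H$ from $\id$ to $\Phi$ then gives, by \eqref{eq: homotopy-boundary},
\[
T_i-\Phi_\#T_i=\partial H_\#(\bb{0,1}\times T_i)+H_\#(\bb{0,1}\times\partial T_i).
\]
Here $\gamma\le\epsilon$, but $L$ is large. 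To keep the error small I will avoid the global bound \eqref{eq: mass-homotopy}. Instead I bound the error directly via the reductions $T_i^p$, $(\partial T_i)^p$, which are legitimate by Lemma \ref{lemma: mass-equal-mod-mass-for-reduction} and \eqref{eq: product-current-mod-p}. The resulting estimate is
\[
\cF_p(T_i-\Phi_\#T_i)\le (k+1)\epsilon C^k\bigl(\bM_p(T_i)+\bM_p(\partial T_i)\bigr).
\]
If $C$ grows with $\epsilon$, I would instead use the fact that $E$ is compact. This lets me choose $\Phi$ as a nearest-point-type retraction onto a finite-dimensional convex set with uniformly controlled constant, after first isometrically placing things in finite-dimensional $l^\infty_N$ up to $\epsilon$. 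I expect this to be the delicate point.

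\textbf{Step 2: Federer--Fleming deformation inside $K\subset\R^N$.} Apply the classical deformation theorem mod $p$ on a cubical grid of side $\epsilon$ to $\Phi_\#T_i$. This writes $\Phi_\#T_i=P_i+\partial Q_i+R_i$ modulo $p$, with $\bM_p(Q_i)+\bM_p(R_i)\le C'\epsilon(\bM_p(T_i)+\bM_p(\partial T_i))$. Here $P_i$ is a polyhedral chain on the grid with $\bM_p(P_i)$ uniformly bounded. Since the grid is finite and the multiplicities are taken mod $p$, only finitely many $P_i$ occur. Pushing back via a Lipschitz retraction of a neighborhood of $K$ (or staying in $K$, which is convex and contains the relevant cubes once they are intersected/projected) keeps everything in $E$.

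\textbf{Step 3: conclusion.} Combining the two estimates gives $\cF_p(T_i-P_i)\le\delta$ for $\epsilon$ small, with finitely many $P_i$, which is total boundedness. A diagonal subsequence is $\cF_p$-Cauchy, and completeness yields $T$.

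The main obstacle is Step 1 together with the transfer of the Euclidean deformation theorem mod $p$ into $E\subset l^\infty$ with constants independent of $i$. If the uniform Lipschitz control of $\Phi$ fails, I would fall back on the fact that the $p$-mass measures are tight on compact $E$ and use slicing (Lemma \ref{lemma: slicing-mod-p}) by distance functions to $\epsilon$-net points. This would cut $T_i$ into pieces of small diameter with controlled boundary, and then cone off each piece (cone inequality $\cF_p(\{x\}\times S)\le 2r\bM_p(S)$) to reduce to finitely many configurations by induction on $k$.
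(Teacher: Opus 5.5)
\textbf{Where your main route breaks.} Step 1 fails, and the repair you propose does not fix it. The homotopy estimate gives
\[
\cF_p(T_i-\Phi_\#T_i)\le (k+1)\,\epsilon\,(1+\Lip\Phi)^k\bigl(\bM_p(T_i)+\bM_p(\partial T_i)\bigr).
\]
So you need self-maps $\Phi_\epsilon\colon E\to E$ with finite-dimensional image, $d(\Phi_\epsilon,\id)\le\epsilon$ and $\epsilon(\Lip\Phi_\epsilon)^k\to0$. Nothing in the hypotheses produces these.

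\begin{itemize}
\item For the partition-of-unity map $\sum_j\lambda_j(x)x_j$, the only bound on the Lipschitz constant comes from the multiplicity of the cover at scale $\epsilon$. For infinite-dimensional convex $E$ this multiplicity tends to infinity as $\epsilon\to0$, since $E$ has infinite covering dimension.
\item Retractions onto convex subsets of $l^\infty_N$ have constants that in general grow with $N=N(\epsilon)$. An almost-isometric map of $E$ into $l^\infty_N$ also comes with no controlled map back into $E$.
\item The metric approximation property of $l^\infty$ (as in Lemma~\ref{lemma: map-modulo-2}) does give $1$-Lipschitz finite-rank $\pi$ with $\norm{\pi(x)-x}<\epsilon$ on $E$. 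But $\pi(E)\not\subset E$, while $\cF_p$ in the statement is the flat norm of $E$ itself: the currents $R,S,Q$ must live in $E$. Getting back into $E$ needs a Lipschitz retraction of a neighbourhood onto $E$, and you have no way to produce one for a general compact convex $E$.
\end{itemize}

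If $E$ were injective, as $E(X)$ is in the later applications, a $1$-Lipschitz retraction would exist and Steps 1--2 would go through. That is not the stated generality.

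\textbf{What actually works.} Your fallback is the right argument and is essentially the paper's proof, but you give it only one sentence and omit the mechanism. One argues by induction on $k$.

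\begin{itemize}
\item \emph{Base case $k=0$.} Total boundedness comes from approximating by combinations $\sum_j q_j\bb{x_j}$ over a finite net with $|q_j|<p$. Segments in the convex set $E$ give the flat estimate.
\item \emph{Localisation.} Lemma~\ref{lemma: slicing-mod-p} and Fatou's lemma give, for each of finitely many centres, a radius $\delta\in(\eta,2\eta)$ and a subsequence with $\bM_p(\langle T_i,d(x,\cdot),\delta\rangle)\le 2C/\delta$. This yields finitely many pieces $U_l$ of diameter at most $4\eta$, with $T_i^l=T_i\on U_l$ integral and $\sup_i\bM_p(\partial T_i^l)<\infty$.
\item \emph{Induction.} Apply the induction hypothesis to the $(k-1)$-dimensional currents $\partial T_i^l$. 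These have zero boundary, so along a further subsequence they are $\cF_p$-Cauchy.
\item \emph{Coning.} Use
\[
T_a^l-T_b^l=\partial\bigl(\{x_l\}\times(T_a^l-T_b^l)\bigr)+\{x_l\}\times(\partial T_a^l-\partial T_b^l).
\]
The first cone has $p$-mass at most $4\eta\bigl(\bM_p(T_a^l)+\bM_p(T_b^l)\bigr)$. The second term is $\cF_p$-small because coning is $\cF_p$-continuous: $\cF_p(\{x\}\times S)\le 2\diam(E)\,\cF_p(S)$.
\item \emph{Conclusion.} Summing over $l$ with additivity of $\norm{\cdot}_p$ gives $\cF_p(T_a-T_b)\le(1+8C)\eta$. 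A diagonal argument and completeness of $\cF_p$ finish the proof.
\end{itemize}

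This route uses only compactness and convexity (so that cones stay in $E$). That is exactly why it avoids the finite-dimensional approximation problem that blocks your Steps 1--2.
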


    \begin{proof}
        Define $C = \sup_i \bM_p(T_i) + \bM_p(\partial T_i)$. We argue by induction over the dimension $k$. Let $k =0$. Put $K = \{ T \in \cF_k(E) \colon \bM_p(T) + \bM_p(\partial T) \leq C\}$. It follows from the lower semicontinuity of the $p$-mass that $K$ is closed. Given $\epsilon>0$, we cover $E$ by a finite family of balls $\{B(x_i,\epsilon)\}_{i=1}^N$ and write $Q \subset K$ for the subset of flat currents of the form 
        $$\sum_i^N q_i \bb{x_i}, \quad |q_i|<p \quad \textup{and} \quad \sum_i^N |q_i| \leq C.$$
        Here, $\bb{x}$ denotes the $0$-current defined by $\bb{x}(f) = f(x)$ for all $f\in \LIP_b(X)$. Since $E$ is convex, there exists $S \in Q$ for every $T \in K$ satisfying $\cF_p(T-S) \leq C\epsilon$. This shows that $K$ is totally bounded and hence, compact.

        \medskip

        We pass to the induction step. Let $x\in E$ and $r>0$. We claim that for almost all $\delta \in (r,2r)$, there exists a subsequence $T_{i(j)}$ such that for every $j \in \N$ we have $T_{i(j)}\on B(x,\delta) \in \bI_k(E)$ and 
        \begin{equation*}\label{eq: compactnes-loc}
            \bM_p(T_{i(j)}\on B(x,\delta) ) + \bM_p(\partial( T_{i(j)}\on B(x,\delta)) ) \leq \frac{2C}{\delta} + C.
        \end{equation*}
        Indeed, define $\pi\colon E \to \R, \;y \to d(x,y)$, then Lemma \ref{lemma: slicing-mod-p} and Fatou's lemma imply
        $$\int_\delta^{2\delta} \liminf_{i \to \infty} \bM_p(\langle T_i,\pi,t\rangle )\;dt\leq \liminf_{i \to \infty}  \int_\delta^{2\delta}\bM_p(\langle T_i,\pi,t\rangle ) \;dt\leq \sup_i \bM_p(T_i) \leq C. $$
        Therefore, for almost every $\delta\in (r,2r)$, we have $ \liminf_i\bM_p(\langle T_i,\pi,\delta\rangle ) \leq C/\delta$. In particular, we can find a subsequence $T_{i(j)}$ satisfying $\bM_p(\langle T_{i(j)},\pi,\delta\rangle ) \leq 2C/\delta$ and $\langle T_{i(j)},\pi,\delta\rangle\in \bI_{k-1}(E)$ for every $j\in \N$. By the definition of the slice operator we have
        \begin{align*}
            \bM_p(\partial (T_{i(j)}\on B(x,\delta) )) \leq  \bM_p(\langle T_{i(j)},\pi,\delta\rangle ) + \bM_p((\partial T_{i(j)})\on B(x,\delta))
        \end{align*}
        and $T_{i(j)}\on B(x,\delta) \in \bI_k(E)$ for every $j \in \N$. Since $\sup_i \bM_p(T_i) + \bM_p(\partial T_i) \leq C$, the claim follows. Now, let $\eta>0$ be such that $ (1 + 8C)\eta < \epsilon$. Applying the claim repeatedly, we find a finite cover of $E$ that consists of balls $\{B_j\}_j^n$ with radii in $(\eta,2\eta)$ and a subsequence of $(T_i)_i$ (still denoted by $(T_i)_i$) such that $T_i\on B_j \in \bI_k(E)$ and
        \begin{equation}\label{eq: compactnes-loc-new}
            \bM_p(T_{i}\on B_j ) + \bM_p(\partial( T_{i}\on B_j) ) \leq \frac{2C}{\delta} + C.
        \end{equation}
        for all $i,j \in \N$. We decompose $E \setminus \bigcup_{j=1}^N \partial B_j$ into a finite partition $\{U_l\}_{l=1}^N$. For $i \in \N$ and $l=1,\dots, N$, we write $T_i^l = T_i \on U_l$. Lemma \ref{lemma: mass-equal-mod-mass-for-reduction} implies that the mass $p$-measure of each $T_i$ is additive and hence,
        $$\bM_p(T_i) = \sum_l^N \bM_p(T_i^l),$$
        for every $i \in \N$. By \eqref{eq: compactnes-loc-new} we have $\sup_i \bM_p(\partial T_i^l) < \infty$ for each $l=1,\dots, N$. Thus, we conclude from the induction hypothesis that we may pass to another subsequence (we keep writing $(T_i)_i$) such that $\partial T_i^l$ is Cauchy with respect to $\cF_p$ for every $l = 1,\dots, N$. We fix a point $x_l\in U_l$ for each $U_l$. It follows that every sequence $\{x_l\}\times \partial T_i^l$ is Cauchy as well. Therefore, there exists $I\in \N$ such that 
        $$\cF_p(\{x_l\}\times \partial T_i^l-\{x_l\}\times\partial  T_{i'}^l) < \frac{\eta}{N}$$
        for every $l =1,\dots, N$ and all $i,i'\geq I$. Fix two integers $a,b$ greater than $I$ for the moment. Then, for each $l=1,\dots,N$, there exist $R_l \in \cI_{k}(E)$, $S_l \in \cI_{k+1}(E)$ and $Q_l \in \cF_k(E)$ satisfying $\{x_l\}\times \partial T_a^l-\{x_l\}\times\partial  T_{b}^l =  R_l + \partial  S_l +pQ_l$ and
        $$\bM_p(S_l) + \bM_p(R_l) \leq \frac{\eta}{N}.$$
        Therefore, using that $\partial( \{x\}\times T) = T -\{x\}\times \partial T$ for each $T \in \bI_k(E)$, we conclude
        \begin{align*}
            (T_a^l -T_b^l) &= \partial (\{x_l\}\times  T_a^l-\{x_l\}\times  T_{b}^l) -  R_l + \partial  S_l +pQ_l.
        \end{align*}
        Since each $U_l$ has radius at most $2\eta$ we have 
        $$\bM_p(\{x_l\}\times  T_a^l-\{x_l\}\times  T_{b}^l) \leq 4\eta( \bM_p(T_a^l)+\bM_p(T_b^l)),$$
        for every $l=1,\dots,N$. It follows
        \begin{align*}
            \cF_p(T_a^l-T_b^l) &\leq  \bM_p(\{x_l\}\times  T_a^l-\{x_l\}\times  T_{b}^l)+\bM_p(S_l) + \bM_p(R_l)
            \\
            &\leq 4\eta( \bM_p(T_a^l)+\bM_p(T_b^l)) + \frac{\eta}{N}.
        \end{align*}
        Finally, 
        \begin{align*}
            \cF_p(T_a-T_b) &\leq \sum_l^N \cF_p(T_a^l-T_b^l)
            \\
            &\leq \sum_l^N 4\eta( \bM_p(T_a^l)+\bM_p(T_b^l)) + \frac{\eta}{N}
            \\
            &\leq 4\eta\sum_l^N ( \bM_p(T_a^l)+\bM_p(T_b^l))+ \eta
            \\
            &\leq 4\eta( \bM(T_a) + \bM(T_b))+\eta
            \\
            &\leq (1 + 8C)\eta < \epsilon.
        \end{align*}

        Since $\epsilon>0$ was arbitrary this completes the proof.

        \end{proof}

    \subsection{Rectifiability of flat currents}
        Let $E$ be a compact convex subspace of a Banach space. It was shown in \cite{Ambrosio-Wenger} that the $p$-mass $\norm{T}_p$ of a flat $k$-current $T$ in $E$ with finite $p$-mass is concentrated on a $k$-rectifiable set if the Banach space satisfies a strong finite-dimensional approximation property. We are able to relax the condition on the ambient Banach space but we need to assume stronger conditions for $\norm{T}_p$. We note that for flat chains with finite mass there is an even more general rectifiability result available \cite[Corollary 8.2.3]{De-pauw-rect-flat-chains}. However, in order to use this result, we need to connect the theories of flat currents and flat chains. In addition, the proof of the rectifiability result presented here uses a different approach which in the opinion of the author is of independent interest. 

        \medskip

        We need the following lemma.

        \begin{lemma}\label{lemma: map-modulo-2}
            Let $T\in \cF_k(l^\infty)$. Suppose that $T$ has finite $p$-mass and $\spt_pT$ is compact. Then, for every $\epsilon>0$ there exist a finite dimensional linear subspace $V\subset l^\infty$ and a $1$-Lipschitz map $\pi\colon l^\infty\to V$ such that $\cF_p(T-\pi_\#T) \leq 2(k+1)\epsilon \cF_p(T)$.
        \end{lemma}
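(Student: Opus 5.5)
We will construct a linear, norm-one "block projection" $\pi$ that moves every point of $K:=\spt_p T$ by at most $\epsilon$. We then compare $T$ and $\pi_\#T$ through the straight-line homotopy, controlling it with the homotopy formula \eqref{eq: homotopy-boundary} and the flat estimate \eqref{eq: flat-homotopy}.

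\emph{Step 1: construction of $V$ and $\pi$.} Regard the coordinate functionals $e_i^*\colon x\mapsto x_i$, restricted to the compact set $K\subset l^\infty$, as elements of $C(K)$. They are $1$-Lipschitz and uniformly bounded on $K$, since $K$ is bounded. By Arzel\`a--Ascoli the family $\{e_i^*|_K\}_{i\in\N}$ is totally bounded in $C(K)$. Hence we can partition $\N$ into finitely many classes $A_1,\dots,A_N$ with the following property: for representatives $i_l\in A_l$ we have $\sup_{x\in K}|x_i-x_{i_l}|\le\epsilon$ for all $i\in A_l$. Define
\[
\pi(x)_i=x_{i_l}\quad\text{for } i\in A_l .
\]
This map is linear with operator norm $1$, so it is $1$-Lipschitz. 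Its image $V$ consists of sequences constant on each $A_l$, so $\dim V\le N$. By construction $\|x-\pi(x)\|_\infty\le\epsilon$ for $x\in K$. Since $\pi$ is linear, the same bound holds on the closed convex hull $E$ of $K$, which is a compact convex subset of $l^\infty$.

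\emph{Step 2: localisation to $E$.} Since $\norm{T}_p$ vanishes outside $K$, we have $T=T\on E \mod p$. Thus, up to a current equal to $0 \bmod p$, we may view $T$ as a flat current in $E$ with finite $p$-mass. Because $\cF_p(T-\pi_\#T)$ only depends on the classes modulo $p$, nothing changes. Let $H\colon[0,1]\times E\to l^\infty$ be given by $H(t,x)=(1-t)x+t\pi(x)$. Each time slice $x\mapsto H(t,x)$ is a convex combination of $1$-Lipschitz linear maps, so $L=1$. For fixed $x\in E$, the curve $t\mapsto H(t,x)$ is $\gamma$-Lipschitz with $\gamma=\|x-\pi(x)\|\le\epsilon$. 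The image of $H$ lies in the compact convex set $\mathrm{conv}(E\cup\pi(E))$, so \eqref{eq: flat-homotopy} is available there.

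\emph{Step 3: the estimate.} Pushing \eqref{eq: homotopy-boundary} forward by $H$ gives
\[
\pi_\#T-T=\partial H_\#(\bb{0,1}\times T)+H_\#(\bb{0,1}\times\partial T).
\]
We bound the two terms on the right separately. For the first, $\cF_p(\partial\,\cdot)\le\cF_p(\cdot)$ together with \eqref{eq: flat-homotopy} yields at most $(k+1)\epsilon\,\cF_p(T)$. For the second, \eqref{eq: flat-homotopy} in dimension $k-1$ yields at most $k\epsilon\,\cF_p(\partial T)\le k\epsilon\,\cF_p(T)$. Summing, $\cF_p(T-\pi_\#T)\le (2k+1)\epsilon\,\cF_p(T)\le 2(k+1)\epsilon\,\cF_p(T)$.

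\emph{Main obstacle.} The delicate point is the applicability of \eqref{eq: flat-homotopy}. It is stated for flat currents of finite $p$-mass, and $\partial T$ need not have finite $p$-mass. I would first try to circumvent this by approximation. Pick $T_j\in\bI_k(E)$ with $\cF_p(T_j-T)\to0$, which is possible by density of $\bI_k(E)$ in $\cF_k(E)$ for compact convex $E$. Run the homotopy argument for each $T_j$, where all boundaries are integral, so \eqref{eq: mass-homotopy} and \eqref{eq: flat-homotopy} apply directly. Then pass to the limit using continuity of $\pi_\#$ and of $\cF_p$ under $\cF_p$-convergence. A secondary check is that $\gamma\le\epsilon$ is only needed on $E$, which Step 1 guarantees through linearity of $\pi$.
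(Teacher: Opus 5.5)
Your argument follows the paper's proof step for step: a finite-rank $1$-Lipschitz $\pi$ that moves $\spt_pT$ by at most $\epsilon$, the straight-line homotopy, then \eqref{eq: homotopy-boundary} and \eqref{eq: flat-homotopy}. Your explicit block projection, and the remark that linearity extends the displacement bound to the convex hull, are genuine improvements. However, Step 3 has a real gap, and the paper's proof shares it. The estimate \eqref{eq: flat-homotopy} is false, and no bound $\cF_p(T-\pi_\#T)\le C\epsilon\,\cF_p(T)$ can hold for every $\pi$ that moves $\spt_pT$ by at most $\epsilon$.

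For a counterexample, take $k=0$, $b=\delta e_1$ with $0<\delta\le\epsilon<1$, and $T=\bb{b}-\bb{0}$. Your Step 1 permits the single class $A_1=\N$ with representative $i_1=2$, i.e.\ $\pi(x)=x_2\,(1,1,\dots)$. This map is linear of norm $1$, moves $K=\{0,b\}$ by at most $\delta$, and kills $b$. Then $\pi_\#T=0$, so Step 3 would give $\cF_p(T)\le\epsilon\,\cF_p(T)$, although $\cF_p(T)=\delta>0$. Equivalently, $H_\#(\bb{0,1}\times T)$ is the segment from $b$ to $0$. Its $\cF_p$-norm is at least that of its boundary $\pm T$, whereas \eqref{eq: flat-homotopy} with $\gamma=\delta$, $L=1$ predicts at most $\delta\,\cF_p(T)$. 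Collapsing a $k$-cube of side $\delta$ to a point produces the same failure in every dimension.

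The source of the error is this. For $T=R+\partial S+pQ$, the homotopy formula gives $H_\#(\bb{0,1}\times\partial S)=\pi_\#S-S-\partial H_\#(\bb{0,1}\times S)$, and $\pi_\#S-S$ carries mass comparable to $\bM(S)$ with no factor $\gamma$. Smallness in $\gamma$ comes only through the normal mass: for $T'\in\bI_k(E)$, \eqref{eq: homotopy-boundary} and \eqref{eq: mass-homotopy} give $\cF(T'-\pi_\#T')\le\gamma\bigl((k+1)\bM(T')+k\bM(\partial T')\bigr)$. So the approximation in your last paragraph does not rescue Step 3. It applies the same false inequality to $T_j$, and the correct bound involves $\bM(\partial T_j)$, which typically blows up as $j\to\infty$ while $\pi$ stays fixed.

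The statement itself survives, because it only asks for \emph{some} $\pi$, which may depend on $T$ and not just on $\epsilon$ and $\spt_pT$.
If $\cF_p(T)=0$, any $\pi$ works, since $\cF_p(\pi_\#T)\le\cF_p(T)$.
Otherwise, set $\eta=2(k+1)\epsilon\,\cF_p(T)>0$.
Using your Step 2 and the density of $\bI_k(E)$ in $\cF_k(E)$, choose $T'\in\bI_k(E)$ with $\cF_p(T-T')<\eta/3$.
Only then run Step 1, with a displacement $\epsilon'>0$ satisfying $\epsilon'\bigl((k+1)\bM(T')+k\bM(\partial T')\bigr)<\eta/3$.
The displacement bound holds on all of $E\supset\spt T'$. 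So the normal-mass estimate above and $\Lip(\pi)\le1$ give
\[
\cF_p(T-\pi_\#T)\le\cF_p(T-T')+\cF_p(T'-\pi_\#T')+\cF_p\bigl(\pi_\#(T'-T)\bigr)<\eta .
\]
This proves the lemma as stated. It also gives the form in which the lemma is used later in the paper, namely a sequence $\pi_i$ with $\cF_p(T-\pi_{i\#}T)\to0$. It does not give the stronger claim, implicit in your Step 3 and in the paper's proof, that every $\pi$ moving $\spt_pT$ by at most $\epsilon$ works.
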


        \begin{proof}
            Let $\epsilon>0$. Set $K = \spt_p T$. Since $l^\infty$ has the metric approximation property, there exist a finite dimensional linear subspace $V\subset l^\infty$ and a $1$-Lipschitz map $\pi\colon l^\infty\to V$ such that $\norm{\pi(x)-x}_{l^\infty}<\epsilon$ for each $x \in K$; see e.g. \cite[Proposition A.6]{de-pauw-map}. Let $H\colon [0,1]\times K \to V$ be the straight line homotopy between the inclusion $\iota_K$ of $K$ into $l^\infty$ and $\pi$. By \eqref{eq: homotopy-boundary} we have 
            $$\partial ( H_\#(\bb{0,1}\times T))- H_\#(\bb{0,1}\times \partial T)  = T - \pi_\# T.$$
            Notice that for a fixed $t$ the map $x \mapsto H(t,x)$ is $1$-Lipschitz and for a fixed $x \in K$ the map $t \mapsto H(t,x)$ is $\epsilon$-Lipschitz. Therefore, \eqref{eq: flat-homotopy} implies
            \begin{align*}
                \cF_p(T-\pi_\#T)  \leq  \cF_p(H_\#(\bb{0,1}\times T)) + \cF_p(H_\#(\bb{0,1}\times \partial T))\leq 2(k+1)\epsilon \cF_p(T).
            \end{align*}
            This completes the proof.  
    \end{proof}

    We can now prove the following rectifiability result for flat currents with finite $p$-mass.

    \begin{proposition}\label{prop: rectifiability-flat-current}
        Let $E$ be a compact and convex subset of $l^\infty$ and let $T\in \cF_k(E)$. Suppose that $T$ has finite $p$-mass and $\norm{T}_p$ is concentrated on a set with finite Hausdorff $k$-measure. Then, $\norm{T}_p$ is concentrated on a $k$-rectifiable set $A \subset E$. Moreover, there exists $S\in \cI_k(E)$ with $S = T \mod p$.
    \end{proposition}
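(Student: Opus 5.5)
We approximate $T$ in the flat norm modulo $p$ by integer rectifiable currents living on finite-dimensional pieces. We then use Bate's perturbation theorem (Theorem~\ref{thme: bate-perturb}) to show that no $p$-mass can sit on a purely unrectifiable set.

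We begin with the decomposition. Let $F\subset E$ be a set with $\Ha^k(F)<\infty$ on which $\norm{T}_p$ is concentrated; we may take $F$ to be $\sigma$-compact. Split $F = A\cup P$ with $A$ countably $k$-rectifiable and $P$ purely $k$-unrectifiable. The aim is to show $\norm{T}_p(P)=0$. Suppose not. Passing to a compact subset, choose a closed set $K\subset P$ with $\norm{T}_p(K)>0$, and consider the restriction $T' = T\on U$ for a small open neighbourhood $U$ of $K$. By the restriction theory for open and closed sets recalled in Section~\ref{sec: flat-currents}, $T'$ is again flat with finite $p$-mass, and $\norm{T'}_p(U\setminus K)$ is as small as we like.

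Next comes the key step, which combines Lemma~\ref{lemma: map-modulo-2} with Bate's theorem. By Lemma~\ref{lemma: map-modulo-2}, $T$ (or $T'$) is $\cF_p$-close to $\pi_\#T$ with $\pi\colon l^\infty\to V\cong\R^m$ $1$-Lipschitz. By Theorem~\ref{thme: bate-perturb}, residually many $1$-Lipschitz maps $f\colon E\to \R^m$ satisfy $\Ha^k(f(P))=0$, so we may perturb $\pi$ uniformly slightly to get such an $f$. The straight-line homotopy estimate \eqref{eq: flat-homotopy} shows that $\cF_p(\pi_\#T - f_\#T)$ is small. Now $f_\#T$ is a flat current modulo $p$ in a convex compact subset of $\R^m$, where the rectifiability theory of Ambrosio--Wenger (valid in finite dimensions) applies. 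There $\norm{f_\#T}_p$ is concentrated on a $k$-rectifiable set, and $\norm{f_\#T}_p\le f_\#\norm{T}_p$. The contribution of $P$ therefore pushes to $f(P)$, a set of $\Ha^k$-measure zero. A rectifiable measure that is absolutely continuous with respect to $\Ha^k$ on its rectifiable carrier cannot charge a null set, so $\norm{f_\#(T\on K)}_p=0$.

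To conclude, we must turn this into $\norm{T\on K}_p=0$, and this is the main obstacle: $p$-mass is only lower semicontinuous under $\cF_p$-convergence, so smallness of $\norm{f_\#(T\on K)}_p$ does not directly bound $\norm{T\on K}_p$. I would handle this by choosing a sequence of such maps $f_j$ converging uniformly to the identity on the compact support, viewed in $l^\infty$ via finite-dimensional subspaces $V_j$. Then $f_{j\#}(T\on K)\to T\on K$ in $\cF_p$, because uniform closeness plus the homotopy formula gives $\cF_p$-closeness controlled by $\bM_p$. Lower semicontinuity then yields $\bM_p(T\on K)\le\liminf_j \bM_p(f_{j\#}(T\on K))=0$, a contradiction. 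Hence $\norm{T}_p$ is concentrated on the rectifiable set $A$.

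For the final claim, take a parametrization of $A$ by bi-Lipschitz images of compact sets. On each piece, pull back via a Lipschitz extension of the inverse chart into $\R^k$, where flat chains mod $p$ of top dimension with finite mass are integer multiplicities mod $p$. Push forward to obtain $S\in\cI_k(E)$ with multiplicities in $\{0,\dots,p-1\}$ and $S=T\mod p$. Summing over pieces converges in mass because $\norm{T}_p(E)<\infty$.
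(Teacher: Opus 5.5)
Your proposal is correct and follows essentially the paper's proof. The paper also reduces to a closed purely unrectifiable set, uses Theorem~\ref{thme: bate-perturb} to perturb Lipschitz maps into $\R^N$ so that this set has $\Ha^k$-null image, kills the pushforward using the Ambrosio--Wenger absolute continuity in finite dimensions, and returns to $T\on P$ through Lemma~\ref{lemma: map-modulo-2} and the homotopy estimate. The only differences are that it isolates the middle step as the claim $\varphi_\#(T\on P)=0 \bmod p$ for every Lipschitz $\varphi$ into a finite-dimensional space, and it concludes from $\cF_p(T\on P)=0$ rather than from lower semicontinuity of $\bM_p$.

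For the last assertion the paper simply cites \cite[Corollary 1.3]{Ambrosio-Wenger}, whereas you give a chart-by-chart construction. That construction tacitly uses that pushing $T\on\varphi_i(K_i)$ forward by a Lipschitz map $h$ equal to the identity on $\varphi_i(K_i)$ (the composition of your two chart extensions) does not change it mod $p$. This is not automatic for flat currents and needs a justification. One way is to push forward by $(\psi,\psi\circ h)$ into $\R^{2m}$ for Lipschitz $\psi\colon E\to\R^m$, where the image is rectifiable mod $p$ and carried by the diagonal, and then apply Lemma~\ref{lemma: map-modulo-2}.
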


    \begin{proof}
        Let $C \subset E$ be the set with finite Hausdorff $k$-measure $\norm{T}_p$ is concentrated on, and let $P \subset C$ be purely $k$-unrectifiable. We need to prove that $\norm{T}_p(P)= 0$. Since $\norm{T}_p$ is a Borel measure, it follows from inner regularity that it suffices to show that $\norm{T}_p(P')=0$ for all closed subsets $P' \subset P$ in order to show that $\norm{T}_p(P)=0$. Therefore, we may suppose that $P$ is closed. We claim that $\varphi_\#  T\on P = 0 \mod p$ for all Lipschitz maps $\varphi\colon E \to V$ whenever $V$ is a finite dimensional Banach space. Clearly, the statement is invariant under bi-Lipschitz changes of the metric on $V$. Hence, it suffices to prove the claim with the additional assumption that $V$ is equal to some $\R^N$ equipped with the standard Euclidean distance. Let $\varphi \colon E \to \R^N$ be $L$-Lipschitz and let $\epsilon>0$. It follows from Theorem \ref{thme: bate-perturb} that there exists a $L$-Lipschitz map $\psi \colon E \to \R^N$ satisfying $d(\varphi,\psi) < \epsilon$ and $\Ha^k(\psi(P))=0$. Notice that
        $$\spt_p(\psi_\#(T \on P)) \subset \psi (\spt_p(T\on P)) \subset \psi(P). $$
        Furthermore, $\psi_\#(T \on P)$ is a flat $k$-current in $\R^N$ with finite $p$-mass $\bM_p$. By \cite[Corollary 1.3]{Ambrosio-Wenger}, we have that $\norm{\psi_\#(T \on P)}_p$ is absolutely continuous with respect to the Hausdorff $k$-measure $\Ha^k$ and hence, $\psi_\#(T \on P) = 0\mod p$. Let $H \colon [0,1] \times E \to \R^N$ be the straight line homotopy between $\varphi$ and $\psi$. By \eqref{eq: homotopy-boundary} we have 
        $$\partial (H_\#(\bb{0,1}\times T \on P))+  H_\#(\bb{0,1}\times \partial ( T \on P)) = \psi_\# T \on P - \varphi_\# T \on P.$$ 
        For a fixed $t$, the map $x \mapsto H(t,x)$ is $L$-Lipschitz and for a fixed $x \in K$, the map $t \mapsto H(t,x)$ is $\epsilon$-Lipschitz. It follows from \eqref{eq: flat-homotopy} that
        $$\cF_p( H_\#(\bb{0,1}\times T \on P)) \leq  (k+1)\epsilon L^k \cF_p(T \on P)$$
        and
        $$\cF_p( H_\#(\bb{0,1}\times \partial (T \on P))) \leq  (k+1)\epsilon L^{k-1} \cF_p(\partial (T \on P)).$$
        Therefore,
        $$\cF_p(\varphi_\#(T\on P)) =\cF_p(\varphi_\#(T\on P)-\psi_\#(T\on P)) \leq (k+1)\epsilon (L^{k-1}+L^k) \cF_p(T\on P).$$
        Since $\epsilon>0$ was arbitrary, we conclude that $\varphi_\#  T\on P = 0\mod p$. This proves the claim. Lemma \ref{lemma: map-modulo-2} implies that there exists a sequence of finite dimensional vector subspaces $V_i \subset l^\infty$ and Lipschitz maps $\pi_i\colon E \to V_i$ such that $\cF_p(T\on P -\pi_{i\#}(T\on P) ) \leq (k+1)\epsilon \cF_p(T\on P )$. It follows from the claim that $\pi_{i\#}(T\on P) = 0 \mod p$ for all $i \in \N$. By passing to the limit we get $T \on P=0 \mod p$ and in particular, $\norm{T}_p(P) = 0$. We conclude that $\norm{T}_p$ is concentrated on a $k$-rectifiable set. The second part of the statement can now be proven exactly as \cite[Corollary 1.3]{Ambrosio-Wenger}.
    \end{proof}

\section{Linear local contractibility}\label{sec: llc}
    Throughout this section, $X$ denotes a metric space. Let $R>0$ and $\lambda \geq 1$. A subset $B \subset X$ is called $(\lambda,R)$-linearly locally contractible if for every $x \in B$ and every $r \in (0,R)$, the ball $B(x,r)$ is contractible within $B(x,\lambda r)$. We say $X$ is $(\lambda,R)$-linearly locally contractible around a point $x \in X$ if the ball $B(x,R)$ is $(\lambda,R)$-linearly locally contractible. Finally, suppose that $X$ has a finite Hausdorff $n$-measure. Then, $X$ is said to be almost everywhere linearly locally contractible if $X$ is linearly locally contractible around almost all points $x \in X$. Notice that in the previous definition, we do allow the linear local contractibility constants to depend on the point. The following easy example shows that this definition is indeed weaker than the standard definition. Let $C$ be a hyperbolic cone that becomes thinner towards the tip $y$ of the cone. Then $C$ is $(1, R)$-linearly locally contractible around each $x\neq y$, provided that $R$ is chosen sufficiently small such that $B(x, 2R)$ does not wrap around the cone. However, $C$ is not $(\lambda,R)$-linearly locally contractible around the tip $y$. 

    \medskip

    Linear local contractibility is a useful tool for constructing and extending continuous functions in a controlled manner. Here, we extend these classical techniques using the weaker version of linear local contractibility that was introduced above. We need the following definition. A map $f\colon X\to Y$ between two metric spaces is called $\varepsilon$-continuous if there exists $\delta>0$ such that $d(f(x), f(y))<\varepsilon$ for all $x, y\in X$ with $d(x,y)<\delta$.

    \begin{proposition}\label{prop:eps-cont-extension}
        Let $X$, $Y$ be compact metric spaces such that $X$ has topological dimension at most $n$ and $B\subset Y$ is a closed subset that is $(\lambda,R)$-linearly locally contractible. Let $A$ be a (possibly empty) subset of $X$. There exists $Q\geq 1$ such that if $f\colon X \to Y$ is $\varepsilon$-continuous on $X$ with $\varepsilon<R/Q$ and $f$ is continuous at every point of $A$ and $f(X\setminus A)\subset B$, then there is a continuous map $g\colon X\to Y$ which coincides with $f$ on $A$ and satisfies $d(f,g)<Q\cdot\varepsilon$.
        Here, $Q$ depends only on $\lambda$ and $n$.
    \end{proposition}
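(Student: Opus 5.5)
This is the classical skeleton-by-skeleton extension via linear local contractibility, as in Borsuk/Dugundji and Semmes. The only new feature is that contractibility is available only inside $B$. The hypothesis $f(X\setminus A)\subset B$ is exactly what lets us work there, because all images we ever need to contract will be near points of $B$.

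\textbf{Step 1: covering and nerve.} Given $\varepsilon$, fix the $\delta$ from $\varepsilon$-continuity of $f$. Since $\dim X\le n$, we choose a finite open cover $\mathcal U=\{U_i\}$ of $X\setminus A$ with the following properties:
\begin{itemize}
\item the multiplicity of $\mathcal U$ is at most $n+1$;
\item each $U_i$ has diameter less than $\delta$;
\item each $U_i$ satisfies $\operatorname{diam} U_i\le \tfrac12 d(U_i,A)$, so the sets shrink towards $A$. This is a Whitney-type cover, which is countable in general and locally finite in $X\setminus A$.
\end{itemize}
Let $N$ be the nerve, of dimension at most $n$. Let $\kappa\colon X\setminus A\to N$ be the canonical map given by a partition of unity subordinate to $\mathcal U$. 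For each vertex $i$ pick $x_i\in U_i$ and set $h(v_i)=f(x_i)\in B$.

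\textbf{Step 2: extending over skeleta.} We extend $h$ inductively over the $m$-skeleton of $N$, for $m=1,\dots,n$. The induction hypothesis is that for every simplex $\sigma$ the image $h(\partial\sigma)$ lies in a ball $B(y,r_{m-1})$ with $y=f(x_i)\in B$ for a vertex $i$ of $\sigma$, and $r_{m-1}=c_{m-1}\varepsilon$. Base case: adjacent vertices satisfy $d(x_i,x_j)<\delta$ (suitably arranged, e.g. by taking diameters below $\delta/2$), hence $d(f(x_i),f(x_j))<\varepsilon$, so $c_0=1$. Inductive step: since $y\in B$ and $r_{m-1}<R$, the ball $B(y,r_{m-1})$ contracts within $B(y,\lambda r_{m-1})$. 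Coning $h|_{\partial\sigma}$ along this contraction extends $h$ over $\sigma$ with image in $B(y,\lambda r_{m-1})$. Comparing with the vertices of the neighbouring faces gives $c_m\le 2\lambda c_{m-1}+2$. After $n$ steps, every image $h(\sigma)$ lies within $c_n\varepsilon$ of $f(x_i)$ for each vertex $i$ of $\sigma$. We choose $Q$ of order $(3\lambda)^n$, which also guarantees that all radii used stay below $R$. This is why the hypothesis $\varepsilon<R/Q$ is needed.

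\textbf{Step 3: definition and continuity.} Set $g=h\circ\kappa$ on $X\setminus A$ and $g=f$ on $A$. If $x\in U_i$, then $d(f(x),f(x_i))<\varepsilon$, so $d(f,g)\le (c_n+1)\varepsilon<Q\varepsilon$. Continuity on the open set $X\setminus A$ is clear.

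\textbf{Main obstacle: continuity at points $a\in A$.} Here the Whitney property of the cover and the continuity of $f$ at $a$ must be combined. Points $x$ near $a$ lie only in sets $U_i$ that are small and close to $a$. Hence $g(x)$ is within a fixed multiple of the oscillation of $f$ on a small neighbourhood of $a$, not merely within $Q\varepsilon$. To make this work we refine the construction. The cover is taken so that $\operatorname{diam}U_i\to0$ as $U_i$ approaches $A$. The contraction step in Step 2 then uses the actual local oscillation $\omega_i$ of $f$ near $U_i$ in place of $\varepsilon$, i.e. radii $r=C\max(\omega_i,\ \text{edge lengths})$. Linear local contractibility is scale-invariant, and the contraction bound $\lambda r$ is applied at every scale below $R$. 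Therefore $h(\sigma)$ lies within $C(\lambda,n)\cdot\operatorname{osc}(f,\text{nbhd of }\sigma)$ of $f(x_i)$, and this tends to $0$ as $x\to a$ because $f$ is continuous at $a$. Checking this uniformity of the estimates as the simplices shrink is the delicate part; the rest is bookkeeping of constants depending only on $\lambda$ and $n$.
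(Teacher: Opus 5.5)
Your proposal is correct and follows essentially the same route as the paper's sketch. Both use a Whitney-type cover of $X\setminus A$ of multiplicity at most $n+1$ with $\diam U_i<\tfrac12\min\{\delta,d(U_i,A)\}$, pass to its nerve, send vertices to $f(x_i)\in B$, extend skeleton by skeleton using contractions of balls centred at points of $B$, and set $g=f$ on $A$. Your scale-adapted radii, proportional to the oscillation of $f$ on $\bigcup_{j\in\sigma}U_j$ rather than to $\varepsilon$, are exactly what continuity at $A$ requires, and they are what the paper's estimate on simplices with vertex $v_i$ is meant to encode.

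As in the paper, you tacitly use that $A$ is closed, so that $X\setminus A$ is open and $d(U_i,A)>0$. This holds in the paper's application, and it is in fact necessary: for non-closed $A$ the statement can fail.
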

    
    We record the following consequence of the proposition: if two maps $f_0, f_1\colon X\to B$ are continuous and satisfy $d(f_0,f_1)<\varepsilon$, then they are homotopic through a homotopy $H\colon [0,1]\times X\to Y$ satisfying 
    $$
    d(H(t,x), f_0(x))<(1+Q)\cdot\varepsilon
    $$ 
    for every $x\in X$ and each $t\in[0,1]$.
    \medskip
    
    The proposition can be proven analogously to the corresponding result in \cite{Petersen93}; see also \cite[Proposition 5.4]{semmes-curves}. We therefore only provide a sketch of the proof. 
    
    \begin{proof}
        Let $Q\geq 1$ be sufficiently large to be determined later and let $\delta>0$ be such that $d(f(x),f(y))< \epsilon<R/Q$ whenever $d(x,y)<\delta$. Since $X$ is compact and has topological dimension at most $n$, there exists an open covering $\left\{U_i\right\}_{i\in \N}$ of $X\setminus A$ with multiplicity at most $n+1$ and 
        \begin{equation}\label{eq: llce-ext-control-diam-cover}
            \diam U_i < \frac{1}{2} \min\{\delta,d(U_i,A)\}
        \end{equation}
        for all $i \in \N$. Furthermore, there exist a simplicial complex $\Sigma$ of dimension $\leq n$ and a continuous map $\varphi\colon X \setminus A \to \Sigma$ with the following property. Whenever $x \in U_i$ and $y \in U_j$, then $\varphi(x)$ and $ \varphi(y)$ are contained in a common simplex if and only if $U_i \cap U_j \neq \emptyset$. It follows that for each $i \in \N$, there is a vertex $v_i \in \Sigma$ such that $\varphi(U_i)$ is contained in a union of simplices with $v_i$ as a vertex. The simplicial complex $\Sigma$ is usually referred to as the nerve of the cover $\left\{U_i\right\}_{i\in \N}$; see e.g. \cite[Section 3]{Petersen93}. Let $\Sigma^0 = \{v_1,v_2,\dots\}$ be the $0$-skeleton of $\Sigma$, where each $v_i$ corresponds to $U_i$ as explained before. For each $i \in \N$, fix a point $x_i \in U_i$ and define $\psi^0\colon \Sigma^0 \to B$ by $\psi^0(v_i)=f(x_i)$ for each vertex $v_i$. Let $v_i,v_j$ be two vertices of the same simplex. By \eqref{eq: llce-ext-control-diam-cover}, we have $d(x_i,x_j) < \delta$ and hence, $d(\psi^0(v_i),\psi^0(v_j)) <\epsilon$. Therefore, if $Q>0$ is sufficiently large with respect to $\lambda$, $n$ and $R$ we can use the linear local contractibility to extend $\psi^0$ inductively over the skeleton of $\Sigma$ to a continuous map $\psi\colon \Sigma\to Y$ such that the following holds. For each $i\in \N$ and every $y \in \Sigma$ that is contained in a simplex with $v_i$ as a vertex, we have
        \begin{equation}\label{eq: llc-ext-control-extension}
            d(g(y),g(x_i)) \leq C \diam U_i,
        \end{equation}
        where $C$ depends only on $n$ and $\lambda$. Finally, we define $g \colon X \to Y$ as $g=f$ on $A$ and as $\psi\circ\varphi  $ on $X \setminus A$. Using \eqref{eq: llce-ext-control-diam-cover} and \eqref{eq: llc-ext-control-extension}, it is not difficult to show that $g$ is continuous and satisfies $d(f,g)< Q \epsilon$.
        \end{proof}

    \subsection{Linear local contractibility and manifolds}\label{sec: llc-mfld}
         For the remainder of the section, suppose that $X$ has finite Hausdorff $n$-measure and is homeomorphic to a compact smooth $n$-manifold $M$. We denote the homeomorphism by $\varrho\colon M\to X $.
         \medskip
         
         First, we describe the construction of the orientable double cover of a manifold without boundary. Recall that for each $x\in X$, the relative homology group $H_n(X, X\setminus x)$ is infinite cyclic. The orientable double cover of $X$ is defined as 
         $$\tilde{X} = \big\{ (x,o_x) \colon x\in X,\; o_x \textup{ is a generator of }H_n(X, X\setminus x)\big\}.$$
         For each $x \in X$, there exist two points $(x,o_x^+),(x,o_x^-) \in \tilde{X}$ that correspond to the positive and negative local orientations at $x$. Furthermore, there exists a topology on $\tilde{X}$ such that the map $\pi\colon \tilde{X}\to X, (x,o_x) \mapsto x$ is a (topological) double cover. Therefore, for each $x\in X$ there exists an open connected neighborhood $U\subset X$ such that $\pi^{-1}(U)$ consists of two components $U^+,U^-\subset\tilde{X}$ and $\pi$ restricted to either of these components is a homeomorphism. The orientable double cover is a closed, orientable manifold and is connected if and only if $X$ is non-orientable. Finally, we describe how to construct a metric on $\tilde{X}$. Let $\{U_i\}_{i=1}^N$ be an open cover of $X$ such that for each $i = 1,\dots, N$, the preimage $\pi^{-1}(U_i)$ consists of two components that are homeomorphic to $U_i$. We endow each $U_i^+$ and $U_i^-$ with a metric such that $\pi$ restricts to an isometry on each member of $\{U_i^+,U_i^+\}_{i=1}^N$. Notice that if two sets $V,W\in \{U_i^+,U_i^+\}_i^N$ have non-empty intersection, then for all $x,y\in V\cap W$, the distance between $x$ and $y$ in $V$ and in $W$ is equal. It follows that there exists a metric on $\tilde{X}$ that agrees with the topology, such that the map $\pi:\tilde{X} \to X$ is a local isometry. The next lemma is an immediate consequence of the definitions and the fact that $\pi$ is a local isometry. 

         \begin{lemma}\label{lemma: ae-llc-orientable-double-ae-lcc}
            If $X$ is linearly contractible around $x$, then $\tilde{X}$ is linearly contractible around $(x,o_x^+)$ and $(x,o_x^+)$. In particular, if $X$ is (almost everywhere) linearly locally contractible, then $\tilde{X}$ is (almost everywhere) linearly locally contractible. 
         \end{lemma}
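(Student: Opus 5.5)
The plan is to work directly with the local isometry $\pi\colon \tilde{X}\to X$ constructed above and to transport contractions through it. Suppose $X$ is $(\lambda,R)$-linearly locally contractible around $x$, meaning that every ball $B(y,r)$ with $y\in B(x,R)$ and $r\in(0,R)$ is contractible within $B(y,\lambda r)$. Fix a lift $\tilde{x}\in\{(x,o_x^+),(x,o_x^-)\}$. By compactness of $X$ and the finiteness of the cover $\{U_i\}_{i=1}^N$ used to build the metric on $\tilde{X}$, there is a Lebesgue number $\rho>0$ with the following property: every ball of radius $\rho$ in $X$ lies in some $U_i$, and $\pi$ maps the corresponding component $U_i^{\pm}$ isometrically onto $U_i$. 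We also need the metric on $\tilde{X}$ to be chosen so that the two sheets over each $U_i$ are at distance at least of order $\rho$ from each other. This lets us pick $\rho'\le\rho$ such that every ball $B(\tilde{y},s)$ in $\tilde{X}$ with $s<\rho'$ is mapped by $\pi$ isometrically onto $B(\pi(\tilde{y}),s)$.

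We then set $R'=\min\{R,\rho'/(2\lambda)\}$ and check that $\tilde{X}$ is $(\lambda,R')$-linearly locally contractible around $\tilde{x}$. Take $\tilde{y}\in B(\tilde{x},R')$ and $r<R'$. Then $\lambda r<\rho'$, so $B(\tilde{y},\lambda r)$ is isometric via $\pi$ to $B(y,\lambda r)$, where $y=\pi(\tilde{y})$ lies in $B(x,R)$. The ball $B(y,r)$ is contractible within $B(y,\lambda r)$ by a homotopy $H$. Composing $H$ with the inverse of the isometry $\pi|_{B(\tilde{y},\lambda r)}$ gives a contraction of $B(\tilde{y},r)$ inside $B(\tilde{y},\lambda r)$. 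Since $\tilde{x}$ was either lift, this proves the first claim.

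For the ``in particular'' statement, there are two cases.
\begin{itemize}
\item If $X$ is linearly locally contractible with uniform constants, then the argument above with $R$ independent of the point shows that $\tilde{X}$ is linearly locally contractible with constant $\lambda$, for radii below $\min\{R,\rho'/(2\lambda)\}$. Since $\tilde{X}$ is compact, this can be converted to the standard definition, for radii up to $\diam\tilde{X}/\lambda'$, by enlarging $\lambda$. This uses that each ball of radius comparable to $\diam\tilde{X}$ is contractible within $\tilde{X}$ with constant controlled by compactness; this is only needed if one insists on the global-radius formulation.
\item In the almost everywhere case, let $N\subset X$ be the $\Ha^n$-null set where local contractibility fails. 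Then the bad set in $\tilde{X}$ is contained in $\pi^{-1}(N)$. Because $\pi$ is a local isometry and $2$-to-$1$, we have $\Ha^n(\pi^{-1}(N))\le 2\Ha^n(N)=0$, and $\Ha^n(\tilde{X})\le 2\Ha^n(X)<\infty$.
\end{itemize}

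The main obstacle is justifying that $\pi$ is isometric on small balls of $\tilde{X}$, rather than merely on the sheets $U_i^\pm$. If the glued metric allowed the two lifts of a point to be arbitrarily close, balls in $\tilde{X}$ would not be isometric to balls in $X$ and the argument would fail. I would therefore make the metric on $\tilde{X}$ explicit, for instance as the induced length-type chain metric obtained from the sheets. For such a metric, points on different sheets over the same $U_i$ are at distance bounded below in terms of the Lebesgue number, which supplies $\rho'$.
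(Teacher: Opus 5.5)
Your argument is correct and is the one the paper intends: the paper treats the lemma as an immediate consequence of $\pi$ being a local isometry, and your transport of the contraction through $\pi|_{B(\tilde y,\lambda r)}$, together with $\Ha^n(\pi^{-1}(N))=0$ for the almost-everywhere case, is exactly that content. One caution concerns the uniform radius $\rho'$, which should be derived directly from the local isometry property and compactness of $\tilde X$: take a Lebesgue number for the neighbourhoods on which $\pi$ is an isometry onto an open set, and use a short contradiction argument for surjectivity onto $B(\pi(\tilde y),s)$. It should not rest on requiring the two sheets over each $U_i$ to be at distance of order $\rho$, which fails even for the natural metric (over a long arc in the connected double cover of a circle the two sheets are separated only by the small gap), and which a chain metric built from an arbitrary evenly covered cover does not provide (it can even identify the two lifts of a point).
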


         We can now easily prove the validity of the relative isoperimetric inequality in non-orientable metric manifolds that are Ahlfors $n$-regular and linearly locally contractible. 

        \begin{proof}[Proof of Theorem \ref{thme: poincare}]
            Let $X$ be a metric space that is homeomorphic to a closed, non-orientable smooth $n$-manifold. Furthermore, suppose that $X$ is Ahlfors $n$-regular and linearly locally contractible. Let $\tilde{X}$ be the orientable double cover of $X$ and $\pi\colon \tilde{X}\to X$ be the covering map. By Lemma \ref{lemma: ae-llc-orientable-double-ae-lcc} the orientable double cover $\tilde{X}$ is linearly locally contractible as well. Moreover, the covering map $\pi$ is a local isometry and thus, $\tilde{X}$ is also Ahlfors $n$-regular. It follows from \cite[Theorem 1.5]{BMW} that $\tilde{X}$ satisfies a relative isoperimetric inequality. Therefore, using again that $\pi$ is a local isometry, we conclude that $X$ satisfies a local relative isoperimetric inequality. That is,
            $$\min\bigl\{\Ha^n\big(E\cap B\big), \Ha^n\big(B \setminus E\big) \bigr\} \leq C \cdot\mathscr{M}_-\big( E\, | \, \lambda B\big)^{\,\tfrac{n}{n-1}}$$
            holds for every Borel subset \(E\subset X\) and every open ball \(B\subset X\) with sufficiently small radius. Since $X$ is compact the local relative isoperimetric inequality upgrades to a global inequality. Notice the Ahlfors regularity and linear contractibility constants of $X$ and $\tilde{X}$ are comparable. Hence, the constants $C,\lambda\geq 1$ depend only on the data of $X$. This completes the proof.
        \end{proof}

         Next, we explain how to glue two copies of $X$ along their boundaries.

        \begin{definition}\label{def: gluing-boundary}
            Suppose that $X$ has boundary. Let $\hat{X}$ be the space obtained as the quotient $\hat{X}= X_1 \sqcup X_2 / \sim$, where $X_1$ and $X_2$ are two copies of $X$ and $x\sim y$ if and only if $x\in \partial X_1$, $y\in \partial X_2$ and $x=y$. We equip $\hat{X}$ with the quotient metric $\hat{d}$, which in this case has the following form
            $$
            \hat{d}(x,y)=
            \begin{cases}
            d(x,y)\; &\textup{if } x,y\in X_1\textup{ or }x,y\in X_2,\\
            \inf_{z \in \partial X} d(x,z)+d(z,y) &\textup{if } x\in X_1, y\in X_2\textup{ or }y\in X_1, x\in X_2.
            \end{cases}
            $$
        \end{definition}

        It is not difficult to show that $\hat{X}$ is a closed manifold and orientable in case $X$ is orientable. We write $\iota_1,\iota_2\colon X\to \hat{X}$ for the isometric embeddings of $X$ into $\hat{X}$ that identify $X$ with $X_1$ and $X_2$, respectively. Given a map $\varphi\colon X \to M$ satisfying $\varphi(\partial X)\subset \partial M$, we let $\hat{\varphi}\colon \hat{X}\to \hat{M}$ be the map such that $\hat{\varphi}(\iota_i^X(x))=\iota_i^M(\varphi(x))$ for every $x\in X$ and $i = 0,1$.


        \[
        \begin{tikzcd}[row sep=huge, column sep=huge,
        nodes={scale=1.3, text depth=0pt},nodes={inner xsep=0.32em}]
            M_1
                \arrow[r,hookrightarrow, "\iota_1^M"]
            & \hat{M}
                \arrow[r,hookleftarrow, "\iota_2^M"]
            & M_2
            \\
            X_1
                \arrow[r,hookrightarrow, "\iota_1^X"]                
                \arrow[u,"\varphi"]
            & \hat{X}
                \arrow[u,"\hat{\varphi}"]
                \arrow[r,hookleftarrow, "\iota_2^X"]
            & X_2
                \arrow[u,"\varphi"]
        \end{tikzcd}
        \]

        It is not difficult to show that if $\varphi$ is continuous or Lipschitz, then $\hat{\varphi}$ is continuous or Lipschitz as well. In particular, $\hat{X}$ is homeomorphic to $\hat{M}$ via the homeomorphism $\hat{\varrho}\colon \hat{M}\to \hat{X}$. Furthermore, if $X$ is orientable, then the degree $\deg(\varphi)$ of $\varphi$ is well-defined and we have $\deg(\varphi)=\deg(\hat{\varphi})$. Finally, we record the following lemma for later use.

        \begin{lemma}\label{lemma: ae-llc-bdry-double-ae-llc}
            If $X$ is $(\lambda,R)$-linearly contractible around $x\in X\setminus \partial X$, then $\hat{X}$ is $(\lambda,\hat{R})$-linearly contractible around $\iota_1(x)$ and $\iota_2(x)$, where $\hat{R}= \min\{R, d(x,\partial X)/3\lambda\}$. In particular, if $\Ha^n(\partial X) = 0$ and $X$ is almost everywhere linearly locally contractible, then $\hat{X}$ is almost everywhere linearly locally contractible. 
        \end{lemma}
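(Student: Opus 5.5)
The plan is to verify the contractibility condition directly from the explicit form of the quotient metric $\hat{d}$ in Definition \ref{def: gluing-boundary}. The key point is that points at $\hat{d}$-distance smaller than $d(x,\partial X)$ from $\iota_1(x)$ all lie in the copy $X_1$. There $\hat d = d$, so small balls in $\hat X$ centred near $\iota_1(x)$ are isometric copies of balls in $X$. By symmetry it suffices to treat $\iota_1(x)$.

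First I set $\rho = d(x,\partial X) > 0$, which is positive since $\partial X$ is closed and $x \notin \partial X$. I take $\hat{R} = \min\{R, \rho/3\lambda\}$ and consider the ball $B^{\hat X}(\iota_1(x),\hat R)$.

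Let $y$ lie in this ball and let $r\in(0,\hat R)$. I claim that $B^{\hat X}(y,\lambda r)\subset \iota_1(X\setminus\partial X)$ and that this ball equals $\iota_1(B^X(y',\lambda r))$, where $y=\iota_1(y')$. The first step is to show that $y$ lies in $X_1$ and not in $X_2\setminus\partial X$. If $y\in X_2$, then
\[
\hat d(\iota_1(x),y)=\inf_{z\in\partial X}\bigl(d(x,z)+d(z,y')\bigr)\ge \rho .
\]
This is larger than $\hat R$, so it cannot happen. Hence $y=\iota_1(y')$ with $d(x,y')<\hat R\le \rho/3$.

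Next, for any point $w$ with $\hat d(y,w)<\lambda r<\lambda\hat R\le \rho/3$, the triangle inequality gives $\hat d(\iota_1(x),w)<2\rho/3<\rho$. By the same argument as above, $w=\iota_1(w')$ with $w'\in X_1$. Since both points lie in the same copy, $\hat d(y,w)=d(y',w')$. Therefore $\iota_1$ restricts to an isometry from $B^X(y',s)$ onto $B^{\hat X}(y,s)$ for every $s\le\lambda r$.

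Finally, I transfer the contraction. Since $d(x,y')<\hat R\le R$, the point $y'$ lies in $B^X(x,R)$. Also $r<\hat R\le R$, so by hypothesis $B^X(y',r)$ is contractible within $B^X(y',\lambda r)$. I compose this contraction with the embedding $\iota_1$, which is continuous because it is an isometric embedding. This gives a contraction of $B^{\hat X}(y,r)$ inside $B^{\hat X}(y,\lambda r)$. Hence $\hat X$ is $(\lambda,\hat R)$-linearly locally contractible around $\iota_1(x)$, and the same argument applies verbatim to $\iota_2(x)$.

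For the ``in particular'' statement, let $G\subset X$ be the set of points around which $X$ is linearly locally contractible; by hypothesis $\Ha^n(X\setminus G)=0$. The set where $\hat X$ might fail to be linearly locally contractible is contained in
\[
\iota_1\bigl((X\setminus G)\cup\partial X\bigr)\cup\iota_2\bigl((X\setminus G)\cup\partial X\bigr).
\]
Since $\iota_i$ are isometric embeddings, this set has $\Ha^n$-measure at most $2\bigl(\Ha^n(X\setminus G)+\Ha^n(\partial X)\bigr)=0$. Moreover, $\Ha^n(\hat X)\le 2\Ha^n(X)<\infty$, so the notion of almost everywhere linear local contractibility applies to $\hat X$.

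The main point requiring care is the first step: ruling out that points of the ball lie in the other copy, which is exactly where the factor $3\lambda$ in $\hat R$ is used. The rest is routine.
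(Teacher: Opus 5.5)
Your proof is correct and follows the same route as the paper. The ball $B(\hat y,\lambda r)$ stays within distance $2\lambda\hat R<d(x,\partial X)$ of $\iota_1(x)$, so it lies in the copy $X_1$ where $\hat d=d$, and the contraction in $X$ transfers via $\iota_1$.

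You also make explicit a step the paper leaves implicit: points of $X_2\setminus\partial X$ are at $\hat d$-distance at least $d(x,\partial X)$ from $\iota_1(x)$. In the ``in particular'' part, your handling of the exceptional set $\iota_i(X\setminus G)$ and of $\Ha^n(\hat X)<\infty$ is slightly more complete than the paper's one-line remark.
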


        \begin{proof}
            Put $\hat{x} = \iota_1(x)$. Let $\hat{y}\in B(\hat{x},\hat{R})$ and $r\in (0, \hat{R})$. By assumption, the ball $B(y,r)$ is contractible within $B(y,\lambda r)$, where $y = \iota_1^{-1}(\hat{y})$. Notice that $B(y,\lambda r) \subset B(x,2\lambda \hat{R})$ and $ B(x,2\lambda \hat{R}) \cap \partial X = \emptyset$. Therefore, $\iota_1$ restricted to $B(x,2\lambda \hat{R})$ is an isometry and in particular, $B(\hat{y},r)$ is contractible within $B(\hat{y},\lambda r)$. The proof for $\iota_2(x)$ is analogous. Finally, since $\Ha^n(\iota_1(\partial X)) = 0$ the second part of the statement follows directly from the first part. 
        \end{proof}

        We conclude the discussion with the following lemma, which will be useful to prove the main theorems.

        \begin{lemma}\label{lemma: fund-class-closed-gives-fund-class-bdry}
            Suppose that $\hat{X}$ has a metric fundamental class $\hat{T} \in \cI_n(\hat{X})$ (modulo $2$). Then, $T =\iota^{-1}_{1\#}(\hat{T}\on \iota_1(M_1)) \in \cI_n(X)$ defines a metric fundamental class (modulo $2$) of $X$. Moreover, suppose that $\hat{T}$ has the following uniqueness property: whenever $S\in \bI_n(\hat{X})$ is an $n$-cycle (modulo $2$), then there exists $k \in \Z$ (or $k=0,1$) such that $S= k\cdot \hat{T}$ (or $S= k\cdot \hat{T}\mod 2$). Then, $T$ satisfies the same uniqueness property for integral currents $S \in \bI_n(X)$ with $\spt \partial S \subset \partial X$ or ($\spt_2 \partial S \subset \partial X$).
        \end{lemma}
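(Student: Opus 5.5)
We first check that $T$ is a well-defined integer rectifiable current in $X$ with the required properties. Here $\iota_1(M_1)$ should be read as $\iota_1(X)=X_1\subset\hat{X}$, a closed set. Since $\iota_1\colon X\to \hat{X}$ is an isometric embedding onto $X_1$, its inverse is an isometry $X_1\to X$; extending it Lipschitz-wise is not needed because $\hat{T}\on X_1$ is concentrated on $X_1$. Thus $T$ is integer rectifiable with $\norm{T}=\iota_1^{-1}{}_\#(\norm{\hat{T}}\on X_1)$ and $\norm{T}_2$ analogously. Consequently $\norm{T}\leq C\Ha^n$ follows from the same bound for $\hat{T}$. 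The key symmetry is the reflection $\sigma\colon\hat{X}\to\hat{X}$ exchanging $X_1$ and $X_2$, an isometry fixing $\iota_1(\partial X)$. We use $\Ha^n(\partial X)=0$ and $\norm{\hat T}\ll\Ha^n$ (or $\norm{\hat T}_2\leq C\Ha^n$) to get $\hat{T}=\hat{T}\on X_1+\hat{T}\on X_2$, with $\hat{T}\on \iota_1(\partial X)=0$.

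For the degree property, let $\varphi\colon X\to M$ be Lipschitz with $\varphi(\partial X)\subset\partial M$, and form $\hat\varphi\colon\hat X\to\hat M$. Then $\hat\varphi_\#\hat T=\deg(\hat\varphi)\bb{\hat M}$, and $\deg\hat\varphi=\deg\varphi$ (resp. mod $2$). We restrict both sides to $\iota_1^M(M_1)$. Since $\hat\varphi$ maps $X_1$ to $M_1$ and $X_2$ to $M_2$, and $\partial M$ is $\Ha^n$-null, we get
\[
\hat\varphi_\#(\hat T)\on M_1=\hat\varphi_\#(\hat T\on X_1)=\iota^M_{1\#}\varphi_\#T .
\]
The right-hand side restricts to $\deg(\varphi)\,\iota^M_{1\#}\bb{M}$. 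Pulling back by the isometry $\iota_1^M$ gives (a). For the mod $2$ case, restriction must be done via $\norm{\cdot}_2$ and Lemma \ref{lemma: mass-equal-mod-mass-for-reduction}, applied to reductions. The fundamental class (mod 2) of $\hat M$ restricted to $M_1$ is $\bb{M}$, since $\bb{\hat M}$ is integration of a (Borel) orientation. The step of checking the definition of degree for manifolds with boundary against $\hat\varphi$ is routine from Section \ref{sec: llc-mfld}.

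For uniqueness, let $S\in\bI_n(X)$ with $\spt\partial S\subset\partial X$. Define $\hat S=\iota_{1\#}S-\iota_{2\#}S$ in the oriented case, choosing orientations of $\hat X$ so that $\iota_2$ is orientation reversing relative to $\iota_1$; in the mod $2$ case define $\hat S=\iota_{1\#}S+\iota_{2\#}S$. Since $\iota_1=\iota_2$ on $\partial X$, we have $\iota_{1\#}\partial S=\iota_{2\#}\partial S$, because $\partial S$ is supported on $\partial X$. This is the main point to verify: a current supported in $\partial X$ depends only on the values of the functions there, by locality and the support property, so pushforwards by maps agreeing on $\partial X$ coincide. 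In the mod 2 case we use $\spt_2$ and a reduction of $\partial S$ supported (mod 2) on $\partial X$. Hence $\partial\hat S=0$ (or $=2\iota_{1\#}\partial S=0\mod2$). By the hypothesis, $\hat S=k\hat T$ (mod $2$). Restricting to $X_1$ and pulling back by $\iota_1$ gives $S=kT$ (mod $2$). Here $\iota_{2\#}S$ has no mass on $X_1\setminus\partial X$, and $\norm{S}(\partial X)=0$ because $\norm{S}\ll\Ha^n$ for $n$-dimensional rectifiable currents and $\Ha^n(\partial X)=0$. I expect the main obstacle to be carefully justifying these restrictions and pushforward identities in the mod $2$ setting, where only flat currents and $2$-mass measures are available. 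I would handle this by working throughout with reductions $S^2,\hat T^2$, which are genuine integer rectifiable currents, and passing to mod-$2$ equality only at the end.
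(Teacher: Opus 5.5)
Your proposal is correct and follows the paper's proof. You restrict $\hat T$ to $\iota_1(X)$ and pull back, use the doubled map $\hat\varphi$ with $\deg\hat\varphi=\deg\varphi$ for the degree property, and for uniqueness apply the hypothesis to $\hat S=\iota_{1\#}S-\iota_{2\#}S$ (or its mod $2$ analogue) and restrict back to $\iota_1(X)$. You are in fact more careful than the paper, which tacitly uses $\Ha^n(\partial M)=0$ to identify $(\hat\varphi_\#\hat T)\on \iota_1^M(M)$ with $\hat\varphi_\#(\hat T\on \iota_1^X(X))$, and $\Ha^n(\partial X)=0$ to get $\norm{S}(\partial X)=0$; the latter comes from the ambient theorems rather than from the lemma's statement.
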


        \begin{proof}
            Let $\hat{T}$ be a metric fundamental class of $\hat{X}$. Then there exists $C>0$ such that $\norm{\hat{T}}\leq C \cdot \Ha^n_{\hat{X}}.$ It follows that $T$ satisfies the same inequality because $\iota_1^X$ is an isometric embedding. Now, let $\varphi \colon X \to M$ be Lipschitz such that $\varphi(\partial X) \subset \partial M$. Then there exists a Lipschitz map $\hat{\varphi}\colon \hat{X}\to \hat{M}$ satisfying $\deg(\varphi) = \deg(\hat{\varphi})$ and $\hat{\varphi}(\iota_i^X(x))=\iota_i^M(\varphi(x))$ for every $x\in X$ and $i = 0,1$. Therefore,
            \begin{align*}
                \varphi_\# T& = ((\iota^{M}_1)^{-1} \circ \hat{\varphi}\circ \iota_1^X)_{\#} T =((\iota^{M}_1)^{-1}  \circ \hat{\varphi})_\# (\hat{T}\on \iota_1^X(X)) 
                \\
                &= (\iota^{M}_{1\# })^{-1}  (\deg(\hat{\varphi}) \cdot(\bb{\hat{M}}\on \iota_1^M(M))) = \deg(\varphi)\cdot \bb{M}.
            \end{align*}
            Finally, let $S \in \bI_n(X)$ be such that $\spt (\partial S) \subset \partial X$. Then $\hat{S} = \iota_{1\#}^X S - \iota_{2\#}^X S $ defines an integral $n$-cycle in $\hat{X}$. Hence, there exists $k \in \Z$ such that $k \cdot \hat{T} = \hat{S}$ and in particular, 
            $$k \cdot T = k \cdot\iota^{-1}_{1\#}(\hat{T} \on \iota_1^X(X)) = \iota^{-1}_{1\#}(\hat{S} \on \iota_1^X(X)) = S.$$
            In case $\hat{T}$ is a metric fundamental class modulo $2$, the proof is analogous.
        \end{proof}

    \subsection{Linear local contractibility and degree}
        Let $X$ be a closed metric $n$-manifold with finite Hausdorff $n$-measure. The next lemma provides a continuous extension of bi-Lipschitz maps from Euclidean space into $X$ that are compatible with the topology of $X$.

    
        \begin{lemma}\label{lemma:bi-lip-ext-LLC}
            Let $K\subset\R^n$ be compact and $\varphi\colon K\to X$ be bi-Lipschitz. Suppose that the image of $\varphi$ is contained in a linearly locally contractible subset $B\subset X$. Then, there exists a continuous extension $\overline{\varphi}\colon U\to X$ of $\varphi$ to an open neighborhood $U$ of $K$ and $C>0$ such that
    
            \begin{equation}\label{eq:bi-lip-ext-LLC}
                d(\varphi(x),\overline{\varphi}(y)) \leq C \norm{x-y}
            \end{equation}
            for each $x \in K$ and every $y \in U$. Here, $C>0$ depends only on $n$, the bi-Lipschitz constant of $\varphi$ and the linear local contractibility constants of $B$.
        \end{lemma}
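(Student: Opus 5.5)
Write $L$ for the bi-Lipschitz constant of $\varphi$ and $(\lambda,R)$ for the linear local contractibility constants of $B$. I will define $\overline{\varphi}$ on a neighborhood of $K$ by a Whitney-type construction and then make it continuous with Proposition \ref{prop:eps-cont-extension}. The plan is to first build a rough, discontinuous extension $f\colon \overline{U}\to B$ that is $\epsilon$-continuous at every scale and continuous on $K$, and then repair it locally.

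\textbf{Step 1 (nearest-point extension).} Pick a bounded open set $U_0\supset K$; the choice is made precise in Step 2. For $y\in \overline{U_0}\setminus K$, choose a point $p(y)\in K$ with $\norm{y-p(y)}=d(y,K)$; a measurable choice is not needed. Set $f(y)=\varphi(p(y))$ and $f=\varphi$ on $K$. For $x\in K$ and any $y$ we get $\norm{p(y)-x}\le 2\norm{x-y}$, and therefore
\[
d(\varphi(x),f(y))\le 2L\norm{x-y}.
\]
In particular $f$ is continuous at every point of $K$, takes values in $\varphi(K)\subset B$, and satisfies $d(f(y),f(y'))\le L(\norm{y-y'}+2d(y,K)+2d(y',K))$. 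So $f$ is $\epsilon$-continuous on $\{d(\cdot,K)\le\eta\}$ with $\epsilon\approx 5L\eta$. $B$ may be replaced by its closure here, because linear local contractibility with slightly adjusted constants passes to the closure.

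\textbf{Step 2 (localized repair).} A single application of Proposition \ref{prop:eps-cont-extension} only gives $d(f,g)<Q\epsilon$ with $\epsilon$ tied to a fixed $\eta$, whereas \eqref{eq:bi-lip-ext-LLC} demands an error that is linear in $\norm{x-y}$. I will therefore decompose $U_0\setminus K$ into Whitney cubes, with cube side comparable to the distance to $K$. Rather than invoking the proposition as a black box, I will rerun its proof (the nerve/skeleton argument) with the cover chosen as follows. Take $A=K$ and $X=\overline{U_0}$, which has dimension $n$. Choose the covering $\{U_i\}$ of $\overline{U_0}\setminus K$ to consist of slightly enlarged Whitney cubes, which have multiplicity bounded in terms of $n$ and satisfy $\diam U_i\le c\, d(U_i,K)$. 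Assign to vertex $v_i$ the value $f(x_i)$. If two cubes $U_i,U_j$ intersect, their images satisfy $d(f(x_i),f(x_j))\lesssim L\diam U_i$. Extend over the skeleta by linear local contractibility at scale $\sim L\diam U_i$; this requires $L\diam U_i\ll R/\lambda^n$, which holds once $U_0=\{d(\cdot,K)<\eta\}$ with $\eta$ small depending on $L,\lambda,R,n$. The resulting map $g$ satisfies $d(g(y),f(x_i))\le C(n,\lambda)L\diam U_i$ for $y\in U_i$.

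\textbf{Step 3 (estimate and continuity).} For $x\in K$ and $y\in U_i$,
\[
d(\varphi(x),g(y))\le d(\varphi(x),f(x_i))+CL\diam U_i\le 2L\norm{x-x_i}+CL\diam U_i\lesssim L\norm{x-y},
\]
because $\diam U_i\lesssim d(y,K)\le\norm{x-y}$ and $\norm{x-x_i}\le\norm{x-y}+\diam U_i$. This is exactly \eqref{eq:bi-lip-ext-LLC}. Continuity of $g$ on $U_0\setminus K$ follows from the construction. Continuity at points of $K$ follows from the displayed estimate, since $g=\varphi$ on $K$ and $\varphi$ is continuous there.

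\textbf{Main obstacle.} The delicate point is Step 2: the proof of Proposition \ref{prop:eps-cont-extension} only controls the extension uniformly, so its proof must be adapted to a scale-dependent covering in order to obtain the linear bound. One must also ensure that all skeletal extensions stay inside the region where linear local contractibility of $B$ applies. The contractions take place in balls $B(f(x_i),\lambda^n C L\diam U_i)$ centered at points of $B$; the hypothesis covers balls of radius $<R$ centered in $B$, and this is enough.
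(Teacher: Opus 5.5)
Your proof is correct and follows essentially the same route as the paper. Both arguments use a Whitney decomposition of $\R^n\setminus K$ near $K$, take vertex values $\varphi(x_i)$ at nearby points of $K$ (via your nearest-point map), and extend skeleton by skeleton using linear local contractibility of $B$ at a scale comparable to the cube size, which yields the estimate $d(\varphi(x),\overline{\varphi}(y))\lesssim L\norm{x-x_i}+\diam Q_i\lesssim\norm{x-y}$. The only difference is that you extend over the nerve of the enlarged Whitney cubes, whereas the paper extends directly over the cubical skeleton; your version avoids treating Whitney cubes as a genuine cubical complex. One small correction: the nerve's dimension is bounded by the overlap multiplicity $N(n)$ rather than by $n$, so your factor $\lambda^n$ should be $\lambda^{N(n)}$, which is harmless for the dependence of the constants.
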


         The lemma is a localized version of \cite[Lemma 5.3]{BMW} and is based on the same argument. 

        \begin{proof}
            Let $\{Q_i\}_i$ be a Whitney cube decomposition of $\R^n\setminus K$, see \cite[Theorem VI.1.1]{Stein-singular-integrals-1970}. That is, the cubes $\{Q_i\}_i$ cover $\R^n\setminus K$, have disjoint interiors and 
            \begin{equation}\label{eq: whitney cube}
                n \diam Q_i \leq d(Q_i,K) \leq 4n\diam Q_i
            \end{equation}
            for each $i \in \N$. Let $\delta>0$ be sufficiently small to be determined later. We define $U$ as the union of all cubes $Q_i$ that intersect the $\delta$-neighborhood. Let $U^0 = \{v_1,v_2,\dots\}$ be the set of all vertices of $U$. For each $l \in \N$, let $x_l \in K$ be such that  $d(v_l,x_l)\leq 2 d(Q_i,K)$, where $Q_i$ is a cube with $v_l$ as a vertex. We define $\overline{\varphi} \colon U^0\to X$ by $\overline{\varphi}(v_l) = \varphi(x_l)$. Let $v_l,v_k\in Q_i$ be two vertices and let $L$ be the bi-Lipschitz constant of $\varphi$. Then
            $$d(\overline{\varphi}(v_l),\overline{\varphi}(v_k)) \leq L \norm{x_l-x_k} \leq L (\diam  Q_i+ 4d(Q_i,K)) \leq 5 L d(Q_i,K) < 5L\delta.$$
            Therefore, if $\delta$ is sufficiently small, we can use that $\overline{\varphi}(U^0)$ is contained in a linearly locally contractible subset $B \subset X$ to extend $\overline{\varphi}$ to a continuous map on $U$ such that $\diam \overline{\varphi}(Q_i) \leq c d(Q_i,K)$ holds for each cube $Q_i\subset U$. Here, $c$ depends only on $n$, $L$ and the linear local contractibility constants of $B$. Finally, let $x \in K$ and $y \in U$. Choose a cube $Q_i\subset U$ containing $y$, and let $v_l$ be a vertex of $Q_i$. Then
            $$d(\varphi(x),\overline{\varphi}(y)) \leq L \norm{x-x_l} + c \diam Q_i\leq (4L + c) d(Q_i,K) \leq  (4L + c) \norm{x-y}.$$
            This completes the proof. 
        \end{proof}

        We record the following consequence of the previous lemma. 
        
        \begin{lemma}\label{lemma: bi-lip-ext-llc-degree}
            Let $\overline{\varphi}\colon U \to X$ be a continuous extension of a bi-Lipschitz map $\varphi\colon K\to X$ as in Lemma \ref{lemma:bi-lip-ext-LLC}. Furthermore, let $\pi\colon X\to \R^n$ be Lipschitz and let $\psi\colon \R^n \to \R^n$ be a Lipschitz extension of $\pi\circ\varphi$. Then for almost all $y \in \R^n$ and for every $z \in K \cap \psi^{-1}(y)$ the local degrees $\deg(\overline{\varphi}, z), \deg(\pi, \overline{\varphi}(z))$ are well-defined and satisfy
            \begin{equation}\label{eq:degree-llc-extension}
                \deg(\overline{\varphi}, z)\cdot \deg(\pi, \overline{\varphi}(z)) = \sgn(\det D_z\psi).
            \end{equation}
        \end{lemma}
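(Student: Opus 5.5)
The plan is to reduce \eqref{eq:degree-llc-extension} to the multiplicativity of local degrees, together with the elementary fact that at a point of differentiability with invertible derivative a Lipschitz map $\R^n\to\R^n$ has local degree $\sgn\det D_z\psi$. The mismatch is that $\pi\circ\overline{\varphi}$ agrees with $\psi$ only on $K$, not on the whole neighborhood $U$. The control \eqref{eq:bi-lip-ext-LLC} is exactly what repairs this: for $y\in U$ near a point $z\in K$ we have $d(\overline{\varphi}(y),\varphi(z))\le C\norm{y-z}$. Hence, with $L=\Lip\pi$,
\[
\norm{\pi(\overline{\varphi}(y))-\psi(y)}\le L\,C\norm{y-z}+\norm{\psi(z)-\psi(y)}\le (LC+\Lip\psi)\norm{y-z},
\]
so $\pi\circ\overline{\varphi}$ is at least Lipschitz-close to $\psi$ at $z$. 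To get first-order agreement I will use density points of $K$.

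\textbf{Step 1 (full-measure set of good points).} Let $G\subset K$ consist of the Lebesgue density points of $K$ at which $\psi$ is differentiable. By Rademacher, $\leb^n(K\setminus G)=0$, so by the area formula $\leb^n(\psi(K\setminus G))=0$. Also $\psi(\{z:\det D_z\psi=0\})$ is null. Moreover, for a.e.\ $y$ the fibre $\psi^{-1}(y)\cap K$ is finite, again by the area formula, since $\psi|_K=\pi\circ\varphi$ is Lipschitz and $\int \#(\psi^{-1}(y)\cap K)\,dy<\infty$. I therefore restrict to $y$ outside these null sets. Every $z\in K\cap\psi^{-1}(y)$ then lies in $G$ and has $\det D_z\psi\neq0$.

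\textbf{Step 2 (comparison on small spheres).} Fix such a $z$, set $A=D_z\psi$, and let $\sigma=\norm{A^{-1}}^{-1}$. For small $r$, every $y'\in\partial B(z,r)$ has a point $w\in K$ with $\norm{w-y'}=o(r)$, by density of $K$ at $z$. Then
\[
\pi(\overline{\varphi}(y'))=\pi(\varphi(w))+O(o(r))=\psi(w)+o(r)=\psi(z)+A(y'-z)+o(r),
\]
where the first equality uses \eqref{eq:bi-lip-ext-LLC} applied with the point $w$. So on $\partial B(z,r)$ the map $\pi\circ\overline{\varphi}$ differs from the affine map $y'\mapsto \psi(z)+A(y'-z)$ by less than $\sigma r/2$. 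A straight-line homotopy in $\R^n\setminus\{\psi(z)\}$ then shows that $\pi\circ\overline{\varphi}$ maps $\partial B(z,r)$ into $\R^n\setminus\{y\}$ with the same winding degree $\sgn\det A$.

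In particular, $z$ is isolated in $(\pi\circ\overline{\varphi})^{-1}(y)$ near $z$, and hence also in $\overline{\varphi}^{-1}(\overline{\varphi}(z))$. Also, $\overline{\varphi}(z)=\varphi(z)$ is isolated in $\pi^{-1}(y)$: points of $\pi^{-1}(y)$ near $\varphi(z)$ that lie in $\varphi(K)$ correspond to the finite fibre. Excluding points of $\pi^{-1}(y)$ outside $\varphi(K)$ is the delicate part.

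\textbf{Step 3 (main obstacle).} The main obstacle is the well-definedness of $\deg(\pi,\overline{\varphi}(z))$, namely that $\varphi(z)$ is isolated in $\pi^{-1}(y)$ within all of $X$. I would first try to avoid needing it outright. Define the local degree of $\pi\circ\overline{\varphi}$ at $z$ from Step 2. Then argue that $\overline{\varphi}$ restricted to $B(z,r)$ is a map between $n$-manifolds whose image contains a neighborhood $V$ of $\varphi(z)$; this should follow from the degree computation, since a nonzero degree of the composition forces surjectivity onto a neighborhood of $y$ under $\pi$. If that is insufficient, I would restrict further to $y$ outside $\pi(X\setminus\varphi(K))$ locally. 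This is justified by Theorem \ref{thme: bate-perturb}-free means: $\Ha^n(X)<\infty$ gives $\leb^n(\pi(N))$ small for sets $N$ of small measure, and near a density point of $\varphi(K)$ the complement has small measure. Once both local degrees are defined, the multiplicity formula $\deg(\pi\circ\overline{\varphi},z)=\deg(\pi,\overline{\varphi}(z))\deg(\overline{\varphi},z)$ from \cite{dold-1980}, combined with Step 2, yields \eqref{eq:degree-llc-extension}.
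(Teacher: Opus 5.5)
Your Steps 1 and 2 are correct and match the paper. The paper shows directly from the bi-Lipschitz bound and \eqref{eq:bi-lip-ext-LLC} that $z$ is isolated in $\overline{\varphi}^{-1}(\varphi(z))$. It then shows that $\pi\circ\overline{\varphi}$ is differentiable at $z$ with $D_z(\pi\circ\overline{\varphi})=D_z\psi$. Your sphere comparison is the same estimate and gives both facts at once. The gap is in Step 3: the well-definedness of $\deg(\pi,\overline{\varphi}(z))$ is part of the statement, and you never actually establish it.

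\emph{Route (a) fails.} A nonzero local degree of $\pi\circ\overline{\varphi}$ at $z$ only shows that $\pi(\overline{\varphi}(B(z,r)))$ contains a neighborhood of $y$ in $\R^n$. It does not show that $\overline{\varphi}(B(z,r))$ contains a neighborhood of $\varphi(z)$ in $X$. For that you would need $\deg(\overline{\varphi},z)\neq 0$, and you could only extract this from the product formula. That formula presupposes that $\deg(\pi,\overline{\varphi}(z))$ is defined, so the argument is circular.

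\emph{Route (b) is incomplete as written.} Knowing that $\leb^n\bigl(\pi(B(\varphi(z),s)\setminus\varphi(K))\bigr)$ is small does not exclude $y$ from that set. You would additionally need two things:
\begin{enumerate}
\item a density theorem in $X$ saying that $\Ha^n$-almost every point of $\varphi(K)$ has zero upper density of $\Ha^n\on(X\setminus\varphi(K))$; Lebesgue density of $K$ at $z$ in $\R^n$ says nothing about this;
\item an Egorov/covering argument over $z$ to turn these local smallness statements into a null exceptional set of $y$.
\end{enumerate}
None of this is supplied.

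\emph{The missing idea is elementary.} Eilenberg's inequality (the coarea inequality for Lipschitz maps from an arbitrary metric space, no rectifiability needed) gives
$$\int^*_{\R^n}\#\pi^{-1}(y)\,d\leb^n(y)\leq c\,\Lip(\pi)^n\,\Ha^n(X)<\infty,$$
with $c$ depending only on $n$. Hence for almost every $y$ the \emph{entire} fibre $\pi^{-1}(y)\subset X$ is finite. Every point of it, in particular $\overline{\varphi}(z)=\varphi(z)$, is then isolated, and the local degree $\deg(\pi,\overline{\varphi}(z))$ is well-defined. This is what the paper uses implicitly when it says the area and coarea formulas make the local degrees well-defined.

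You already apply exactly this finiteness to $\psi$ on $K$ in Step 1. Apply it to $\pi$ on all of $X$ and add that null set of $y$ to Step 1. With that change the ``delicate part'' disappears, and your final appeal to the multiplicativity of local degrees goes through.
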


        Notice that \eqref{eq:degree-llc-extension} implies that, for almost all $y\in \R^n$ and for every $x \in \pi^{-1}(y) \cap \varphi(K)$, the local degree $\deg(\pi, x)$ is equal to $1$ or $-1$. Moreover, taking for $\pi$ a Lipschitz extension of $\varphi^{-1}$ and $\psi = \id_{\R^n}$, we conclude that the local degree $\deg(\overline{\varphi}, z)$ is equal to $1$ or $-1$ for almost all $z \in K$.
        
        \begin{proof}
            Let $C$ be the constant from \eqref{eq:bi-lip-ext-LLC} and let $L\geq 1$ be the bi-Lipschitz constant of $\varphi$. For the moment, fix a density point $z \in K$ such that $\psi$ is differentiable at $z$. Then there exists $r>0$ such that whenever $u \in B(z,r)$, then there exists $v \in K$ satisfying $\norm{u-v} < (2CL)^{-1} \norm{u-z}$. For such $u$ and $v$ we have
            $$d(\varphi(z),\overline{\varphi}(u)) \geq L^{-1}  \norm{z-v} - C\norm{u-v} \geq (2L)^{-1} \norm{u-z}.$$
            Therefore, $\overline{\varphi}(u) \neq \varphi(z)$ for all $u \in B(z,r)\setminus z$ and in particular, the local degree $\deg(\overline{\varphi}, z)$ is well-defined. Furthermore, for $\epsilon>0$ there exists $r>0$ such that for every $u\in B(z,r)$ the following holds:
            \begin{enumerate}
                \item $\norm{\psi(z)-\psi(u)-D_z\psi(u-z)}\leq  2^{-1} \epsilon \norm{u-z}$;
                \item there exists $v\in K$ satisfying $\norm{u-v}< (2(\Lip(\psi)+C)^{-1} \epsilon\norm{u-z}$.
            \end{enumerate}
            Since $\psi$ and $f = \pi\circ\overline{\varphi}$ agree on $K$ we have

            \begin{align*}
                 \norm{f(z)-&f(u)-D_z\psi(u-z)} \leq \norm{\psi(z)-\psi(u)-D_z\psi(u-z) }+\norm{\psi(u)-f(u)}
                 \\
                 &\leq (2\epsilon)^{-1}\norm{u-z}+ \norm{\psi(u)-\psi(v)}+ \norm{f(v)-f(u)} \leq \epsilon\norm{u-z}
            \end{align*}


            for all $u \in B(z,r)$. It follows that $\pi\circ\overline{\varphi}$ is differentiable at $z$ with $D_z(\pi\circ\overline{\varphi}) = D_z\psi$. The area and coarea formula imply that for almost all $y \in \R^n$ the preimage $\psi^{-1}(y) \cap K$ is finite and $\psi$ is differentiable at each $z \in \psi^{-1}(y) \cap K$ with non-degenerate derivative. Notice that for such $z$ the local degrees $\deg(\psi,z)$ and $\deg(\pi,\overline{\varphi}(z))$ are well-defined. Since almost every $z \in K$ is a density point, the degree implies
            $$\sgn(\det D_z \psi) = \sgn(\det D_z (\pi\circ \overline{\varphi})) = \deg (\pi\circ \overline{\varphi},  z) = \deg(\overline{\varphi},z) \cdot \deg(\pi,\overline{\varphi}(z))$$
            for almost all $y \in \R^n$ and for every $z \in K \cap \psi^{-1}(y)$. This completes the proof. 
        \end{proof}

        The local degree in the previous lemma is defined through singular homology groups with integer coefficients. However, the statement remains true if the degree is defined using the singular homology groups with coefficients in $\Z/2\Z$, denoted by $\deg(\varphi,x,\Z/2\Z)$, as explained in Section \ref{sec: orientation-degree}. More precisely, Lemma \ref{lemma: bi-lip-ext-llc-degree} states that for almost all $y\in \R^n$ and for every $x \in \pi^{-1}(y) \cap \varphi(K)$ we have $\deg(\pi,x,\Z/2\Z) = [1]\in \Z/2\Z$. We obtain the following corollary, which is a well-known result if $X$ is smooth.

        \begin{corollary}\label{cor: number-preimage-even}
            Let $X$ be a closed metric $n$-manifold with finite Hausdorff $n$-measure and let $\pi \colon X \to \R^n$ be Lipschitz. If $X$ is almost everywhere linearly locally contractible and $n$-rectifiable, then $\#\pi^{-1}(y)$ is even for almost all $y \in \R^n$.
        \end{corollary}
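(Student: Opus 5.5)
The plan is to combine the additivity of the $\Z/2\Z$-degree with the local degree information from Lemma \ref{lemma: bi-lip-ext-llc-degree}. Since $X$ is closed, the $\Z/2\Z$-degree of $\pi\colon X\to \R^n$ is defined over compact connected sets $K\subset\R^n$. For $K$ a large closed ball disjoint from the compact image $\pi(X)$, we have $H_n(X,X\setminus\pi^{-1}(K)) = H_n(X,X) = 0$, so $\deg(\pi,\Z/2\Z)=0$. Thus for every $y\in\R^n$ with $\pi^{-1}(y)$ finite, additivity gives
$$\sum_{x\in \pi^{-1}(y)} \deg(\pi,x,\Z/2\Z) = 0 \in \Z/2\Z .$$
It then suffices to show that, for almost every $y$, the fibre $\pi^{-1}(y)$ is finite and every local degree $\deg(\pi,x,\Z/2\Z)$ with $x\in\pi^{-1}(y)$ equals $[1]$. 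Parity of $\#\pi^{-1}(y)$ then follows immediately.

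To control the fibres, I would use rectifiability together with almost everywhere linear local contractibility. Let $G$ be the set of points around which $X$ is linearly locally contractible, so that $\Ha^n(X\setminus G)=0$. Around each $x\in G$ take a ball $B(x,R_x)$ that is $(\lambda_x,R_x)$-linearly locally contractible. Since $X$ is $n$-rectifiable, I cover $X$ up to a null set by countably many bi-Lipschitz images $\varphi_i(K_i)$ with $K_i\subset\R^n$ compact. By splitting and shrinking the $K_i$ (using second countability and inner regularity), I arrange that each $\varphi_i(K_i)$ lies in some linearly locally contractible ball $B$, and that the images are pairwise disjoint. Lemma \ref{lemma:bi-lip-ext-LLC} then gives continuous extensions $\overline{\varphi}_i$. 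Applying Lemma \ref{lemma: bi-lip-ext-llc-degree} with a Lipschitz extension $\psi_i$ of $\pi\circ\varphi_i$, in its $\Z/2\Z$ form, yields a null set $N_i\subset\R^n$ such that for $y\notin N_i$ every $x\in\pi^{-1}(y)\cap\varphi_i(K_i)$ has $\deg(\pi,x,\Z/2\Z)=[1]$. Countably many null sets combine to one.

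The remaining issues are the exceptional set and finiteness. Let $P = X\setminus\bigcup_i\varphi_i(K_i)$, which is $\Ha^n$-null. Since $\pi$ is Lipschitz, $\Ha^n(\pi(P))=0$, so for almost every $y$ the fibre lies entirely in $\bigcup_i\varphi_i(K_i)$. Finiteness of fibres for almost every $y$ comes from the area formula: $\int_{\R^n}\#\pi^{-1}(y)\,dy \le \Lip(\pi)^n\Ha^n(X)<\infty$ (via the Eilenberg inequality), so $\#\pi^{-1}(y)<\infty$ almost everywhere.

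The main subtlety I expect is that the $\Z/2\Z$ local degree at $x$ must be well-defined and equal to $[1]$ for \emph{every} point of the fibre, not merely for almost every point of $X$. Lemma \ref{lemma: bi-lip-ext-llc-degree} is formulated in terms of $y$ and gives this for all $z\in K_i\cap\psi_i^{-1}(y)$. Since the fibre is finite, each fibre point is isolated in it, so the local degree exists. I also need the local degree computed through $\overline{\varphi}_i$ to agree with the intrinsic one on $X$; this is built into the lemma's multiplicativity formula.
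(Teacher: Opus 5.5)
Your proposal is correct and follows the paper's route. The paper decomposes $X$ into bi-Lipschitz pieces, applies Lemma \ref{lemma: bi-lip-ext-llc-degree} in its $\Z/2\Z$ form to get $\deg(\pi,x,\Z/2\Z)=[1]$ at every fibre point for almost every $y$, and concludes by additivity together with $\deg(\pi,\Z/2\Z)=[0]$ (since $\pi$ is not onto $\R^n$). The details you add are the steps the paper's short proof leaves implicit: refining the $K_i$ so each image lies in a linearly locally contractible ball, discarding the null set $P$ via $\Ha^n(\pi(P))=0$, and getting finite fibres from the Eilenberg inequality.
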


        Observe that, once we proved Corollary \ref{cor: metric-mfld-rect}, the assumption that $X$ is $n$-rectifiable is redundant.

        \begin{proof}
            Since $X$ is $n$-rectifiable, there exist countably many compact sets $K_i \subset \R^n$ and bi-Lipschitz maps $\varphi_i\colon K_i \to X$ such that the $\varphi_i(K_i)$ are pairwise disjoint and 
            $$\Ha^n\left( X\setminus \bigcup_i \varphi_i(K_i)\right) = 0.$$
            Applying Lemma \ref{lemma: bi-lip-ext-llc-degree} to each $\varphi_i$ implies that for almost all $y \in \R^n$ and for every $x \in \pi^{-1}(y)$, the preimage $\pi^{-1}(y)$ is finite and the local degree $\deg(\pi,x,\Z/2\Z)$ is non-zero. Therefore, by the additivity formula of the degree
            $$\deg(\pi,\Z/2\Z)  = \sum_{x \in \pi^{-1}(y)} \deg(\pi,x,\Z/2\Z) = \big[\#\pi^{-1}(y)\big]\in \Z/2\Z$$
            for almost all $y \in \R^n$. Since $X$ is closed we have $\deg(\pi,\Z/2\Z) = [0] \in \Z/2\Z$; see e.g. \cite[Page 269]{dold-1980}. It follows that $\#\pi^{-1}(y)$ is even for almost all $y\in \R^n$.
        \end{proof}

        An analogous argument shows that whenever $\pi \colon X \to M$ is Lipschitz and $MX$ is a closed Riemannian manifold, then $\#\pi^{-1}(y)$ is even in case $\deg(\pi,\Z/2\Z)$ is zero and odd otherwise.

\section{Orientable manifolds}\label{sec: orientable-mflds}

    The goal of this section is to prove Theorem \ref{thme: existence-orient-llc}. The proof is split into two parts. We first show that there exists a metric fundamental cycle satisfying the upper bound in \eqref{eq: mass-measure-hausdorff-existence-weak-llc}.

    \begin{lemma}\label{lemma: existence-weak-llc}
        Let $X$ be a closed, oriented metric $n$-manifold with finite Hausdorff $n$-measure. Suppose that $X$ is almost everywhere linearly locally contractible. Then $X$ has a metric fundamental class $T \in \bI_n(X)$ with a parametrization $\{\varphi_i,K_i,\theta_i\}$ satisfying $|\theta_i| = 1$ for almost every $x \in K_i$ and each $i \in \N$.
    \end{lemma}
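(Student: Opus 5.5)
The plan is to follow the strategy of \cite[Theorem 1.1]{BMW}, replacing global linear local contractibility by the almost everywhere version. We first decompose $X$ into a rectifiable part and a purely unrectifiable part. Let $E\subset X$ be an $n$-rectifiable set of maximal measure, so that $P=X\setminus E$ is purely $n$-unrectifiable. Since $\Ha^n(X)<\infty$ and $X$ is almost everywhere linearly locally contractible, we can write $E$ up to a null set as a countable union of sets $B_j$, each $(\lambda_j,R_j)$-linearly locally contractible (take balls $B(x,R)$ around good points and use Vitali-type countable selection). We then choose compact $K_i\subset\R^n$ and bi-Lipschitz $\varphi_i\colon K_i\to X$ with pairwise disjoint images, each $\varphi_i(K_i)$ contained in some $B_j$, covering $E$ up to measure zero. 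By Lemma \ref{lemma:bi-lip-ext-LLC} every $\varphi_i$ has a continuous extension $\overline{\varphi}_i$ to a neighbourhood of $K_i$, and by the remark after Lemma \ref{lemma: bi-lip-ext-llc-degree} the local degree $\deg(\overline{\varphi}_i,z)\in\{\pm1\}$ for a.e.\ $z\in K_i$, measured with respect to the fixed orientation of $X$. We set $\theta_i(z)=\deg(\overline{\varphi}_i,z)$, a Borel function with $|\theta_i|=1$, and define $T=\sum_i\varphi_{i\#}\bb{\theta_i}$. Its mass is bounded by $n^{n/2}\Lip\cdot\Ha^n(X)<\infty$ up to constants, so $T\in\cI_n(X)$.

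The key identity to establish is
\[
\pi_\# T=\deg(\pi)\cdot\bb{M}
\]
for every Lipschitz $\pi\colon X\to M$, and similarly for Lipschitz $\pi\colon X\to\R^n$ restricted to a chart. We test on $(f,\pi_1,\dots,\pi_n)$ and, using locality and a partition of unity on $M$, reduce to maps into $\R^n$. The area formula gives
\[
\pi_\#T(f,\mathrm{id}) = \int_{\R^n} f(y)\sum_i\sum_{z\in K_i\cap\psi_i^{-1}(y)}\theta_i(z)\,\sgn\det D_z\psi_i\,dy ,
\]
where $\psi_i$ is a Lipschitz extension of $\pi\circ\varphi_i$. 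By Lemma \ref{lemma: bi-lip-ext-llc-degree}, each summand equals $\deg(\pi,\varphi_i(z))$, because $\theta_i^2=1$. To apply the additivity property of the degree, $\sum_{x\in\pi^{-1}(y)}\deg(\pi,x)=\deg(\pi)$, we need $\pi^{-1}(y)\subset E$ for a.e.\ $y$, i.e.\ $\Ha^n(\pi(P))=0$. This is where Theorem \ref{thme: bate-perturb} enters. Replace $\pi$ by a nearby $\tilde\pi$ (composed with a Lipschitz retraction onto $M$ when $M$ is the target, via Lemma \ref{lemma: lip-approx-easy}) with $\Ha^n(\tilde\pi(P))=0$ and uniformly bounded Lipschitz constant. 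For such $\tilde\pi$ the formula gives $\tilde\pi_\#T=\deg(\tilde\pi)\bb{M}$. By Lemma \ref{lemma: lip-retract-homotopies}, $\tilde\pi$ is homotopic to $\pi$, so the degrees agree, and $\tilde\pi_\#T\to\pi_\#T$ weakly by the continuity axiom of currents. This proves (a).

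For $\partial T=0$, we apply the same computation to maps $\pi\colon X\to\R^n$. Then $\deg(\pi)=0$ because $X$ is closed, so $\pi_\#T=0$ after perturbing. Hence $T(f,\pi_1,\dots,\pi_n)=0$ whenever $f$ is constant, that is $\partial T(\pi_1,\dots)=T(1,\pi_1,\dots)=0$. For general $f$ we use $\partial T(f,\pi_1,\dots,\pi_{n-1})=T(1,f,\pi_1,\dots,\pi_{n-1})$, and $(f,\pi_1,\dots,\pi_{n-1})\colon X\to\R^n$ is such a map. Thus $T$ is an integral cycle.

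The main obstacle is making the perturbation step rigorous. Bate's theorem yields a residual set of $1$-Lipschitz maps into $\R^m$. We must embed $M$ in $\R^m$, perturb there, and retract, all while keeping $\Ha^n(\tilde\pi(P))=0$; this survives because the retraction is Lipschitz. We must also ensure the a.e.\ $y$ statements of Lemma \ref{lemma: bi-lip-ext-llc-degree} hold simultaneously for the countably many $\varphi_i$, which is fine since a countable union of null sets is null.
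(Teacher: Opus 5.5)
Your proposal is correct and follows the paper's proof essentially step for step. It uses the same rectifiable/unrectifiable splitting, the multiplicity $\theta_i=\deg(\overline{\varphi}_i,\cdot)$ obtained from Lemma~\ref{lemma: bi-lip-ext-llc-degree}, the area formula together with additivity of the degree for maps with $\Ha^n(\pi(P))=0$, and Bate's perturbation combined with Lemma~\ref{lemma: lip-retract-homotopies} and the continuity axiom. The only difference is cosmetic: for maps into $M$ you localize with charts and a partition of unity, whereas the paper composes directly with the test map $\tau\colon M\to\R^n$ and uses $\deg(\tau\circ\psi,x)=\sgn(\det D_z\tau)\cdot\deg(\psi,x)$ (multiplicativity of the local degree), which your chart reduction implicitly needs as well.
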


    \begin{proof}
         Let \(X=E\cup P\) be a partition where \(E\) is \(n\)-rectifiable and \(P\) is purely \(n\)-unrectifiable. There exists a countable collection of compact set \(K_i\subset \R^n\) and bi-Lipschitz maps \(\varphi_i \colon K_i \to Y\) such that the images $\varphi_i(K_i)$ are pairwise disjoint and 
        \[
            \Ha^n\Big(E \setminus \bigcup_{i\in \N} \varphi_i(K_i)\Big)=0.
        \]
        By partitioning each $K_i$ if necessary, we may assume that the image of each $\varphi_i$ is fully contained in a linearly locally contractible subset of $X$. For each $i \in \N$, let $\overline{\varphi}_i \colon U_i \to C$ be a continuous extension of $\varphi_i$ given by Lemma \ref{lemma:bi-lip-ext-LLC}. It follows from Lemma \ref{lemma: bi-lip-ext-llc-degree} that the local degree $\deg(\overline{\varphi}_i,z)$ exists and is equal to $1$ or $-1$ for almost all $z\in K_i$ and each $i \in \N$. We define
            $$T=\sum_{i\in \N} \varphi_{i\#} \bb{\theta_i}$$
        where $\theta_i = \deg(\overline{\varphi}_i,z)$. Now, let $\pi \colon X \to \R^n$ be a Lipschitz map and suppose that $\Ha^n(\pi(P)) = 0$. Fix $i \in \N$ for the moment.  Let $\psi \colon \R^n \to \R^n$ be a Lipschitz extension of $\pi\circ \varphi_i$. Lemma \ref{lemma: bi-lip-ext-llc-degree} implies that for almost all $y \in \R^n$ and for every $z \in K \cap \psi^{-1}(y)$, the local degrees $\deg(\overline{\varphi}_i, z), \deg(\pi, \overline{\varphi}(z))$ are well-defined and satisfy
            $$\deg(\overline{\varphi}, z)\cdot \deg(\pi, \overline{\varphi}(z)) = \sgn(\det D_z\psi).$$
        Therefore, 
        \begin{align*}
             \varphi_{i\#} \bb{\theta_i}(1,\pi) &= \int_{K_i}\theta_i(z) \det(D_z\psi) \; d\leb^n(z)
             \\
             &=  \int_{K_i} \deg(\overline{\varphi}_i,z) \sgn(\det D_z\psi) |\det(D_z\psi)| \; d\leb^n(z)
             \\
             &= \int_{K_i} \deg(\pi, \overline{\varphi}(z))|\det(D_z\psi)| \; d\leb^n(z).
        \end{align*}

        By the area formula and because $\varphi_i$ is bi-Lipschitz we have

        $$ \varphi_{i\#} \bb{\theta_i}(1,\pi) = \int_{\R^n}\left(\sum_{x\in \pi^{-1}(y)}\mathbbm{1}_{\varphi_i(K_i)}(x)\deg(\pi,x)\right)\,d\leb^n(y).$$
        Recall that $\Ha^n(\pi(P))=0$ and hence, by the additivity property of the degree 
        \begin{align*}
            T(1,\pi) &= \sum_{i\in\N}  \varphi_{i\#} \bb{\theta_i}(1,\pi)
            \\
            &=  \int_{\R^n}\left(\sum_{x\in \pi^{-1}(y)}\deg(\pi,x)\right)\,d\leb^n(y)
            \\
            &= \int_{\R^n} \deg(\pi) \,d\leb^n(y).
        \end{align*}
        Since $X$ is closed we have $\deg(\pi)=0$ and in particular, $T(1,\pi)= 0$ for every Lipschitz map $\pi\colon X\to \R^n$ satisfying $\Ha^n(\pi(P))= 0$. If $\pi \colon X \to \R^n$ is any Lipschitz map, then Theorem \ref{thme: bate-perturb} implies that there exists a sequence of Lipschitz maps $\pi_i \colon X \to \R^n$ converging uniformly to $\pi$ and satisfying $\Lip\pi_i = \Lip\pi$ as well as $\Ha^n(\pi_i(P)) = 0$ for each $i \in \N$. It follows from the continuity property of metric currents that $T(1,\pi) = \lim_{i \to \infty}T(1,\pi_i) = 0$ and thus $\partial T= 0$. It remains to show that $\psi_\#T= \deg(\psi) \cdot \bb{M}$ for every Lipschitz map $\psi\colon X \to M$. Again, we first suppose that $\Ha^n(\psi(P)) = 0$. Then, a computation as above using the area formula and Lemma \ref{lemma: bi-lip-ext-llc-degree} shows
        $$
        \psi_\#\varphi_{i\#}\bb{\theta_i}(g,\tau) = \int_{\R^n}\left(\sum_{x\in \psi^{-1}(\tau^{-1}(y))}\mathbbm{1}_{\varphi_i(K_i)}(x)g(\psi(x))\deg(\tau\circ \psi, x)\right)\,d\leb^n(y)
        $$
        for every $i \in \N$ and all $(g,\tau) \in \mathcal{D}^n(M)$. Furthermore, it follows from  the area formula and the coarea formula that for almost every $y\in\R^n$ the preimage $\tau^{-1}(y)$ is finite, does not intersect $\psi(P)$, and for every $z\in \tau^{-1}(y)$ we have that $\psi^{-1}(z)$ is finite and $\tau$ is differentiable at $z$ with non-degenerate derivative. Therefore, by the multiplicity property of the local degree we have 
        $$
        \deg(\tau\circ \psi, x)= \deg(\tau, z)\cdot \deg(\psi, x) = \sgn(\det D_z\tau)\cdot \deg(\psi, x),
        $$
        for every such $y$ and each $z\in\tau^{-1}(y)$ and $x\in \psi^{-1}(z)$. Since $\Ha^n(\psi(P))=0$ and by the additivity property of the degree we get
        \begin{equation*}
        \begin{split}
             \psi_\#T(g, \tau) &=\int_{\R^n}\left(\sum_{z\in\tau^{-1}(y)}\left(\sum_{x\in \psi^{-1}(z)} \deg(\psi, x)\right)\cdot g(z)\sgn(\det D_z\tau)\right)\,d\leb^n(y)
             \\
              &= \deg(\psi ) \int_{\R^n}\left(\sum_{z\in\tau^{-1}(y)} g(z)\sgn(\det D_z\tau)\right)\,d\leb^n(y).
        \end{split}
        \end{equation*}
        The change of variables formula implies that $\psi_\#T= \deg(\psi)\cdot \bb{M}$. Finally, let $\psi \colon X \to M$ be any Lipschitz map. Since $M$ is a closed Riemannian $n$-manifold, there exists a bi-Lipschitz embedding $\iota$ of $M$ into Euclidean space and a Lipschitz retraction $\eta$ of an open neighborhood of $\iota(M)$. We can apply Theorem \ref{thme: bate-perturb} to the composition $\iota\circ \psi$ to obtain a $\Lip(\iota\circ \psi)$-Lipschitz map $\psi'\colon X\to M$ arbitrarily close to $\psi$ and satisfying $\Ha^n(\psi'(P))= 0$. Using the Lipschitz retraction $\eta$, we conclude that there exists a sequence $\psi_i \colon X \to M$ converging uniformly to $\psi$ such that $\Lip(\psi_i)\leq \Lip(\eta\circ \iota\circ\psi)$ and $\Ha^n(\psi_i(P)) = 0$ for each $i \in \N$. It follows from Lemma \ref{lemma: lip-retract-homotopies} that for $i \in \N$ large enough we have $\deg(\psi) = \deg(\psi_i)$. Therefore, by the continuity property of metric currents
        $$\psi_{\#}T(g,\tau) = \lim_{i \to \infty} \psi_{i\#} T(g,\tau) = \lim_{i \to \infty}    \deg(\psi_i) \cdot\bb{M}(g,\tau) = \deg(\psi) \cdot\bb{M}(g,\tau).$$
        for all $(g,\tau) \in \mathcal{D}^n(M)$. This completes the proof.
    \end{proof}

    The lower bound in \eqref{eq: mass-measure-hausdorff-existence-weak-llc} will be a direct consequence of the following lemma.

    \begin{lemma}\label{lemma: lower-bound-current}
        Let $X$ be a closed, oriented metric $n$-manifold with finite Hausdorff $n$-measure. Suppose that $X$ has a metric fundamental class $T\in \bI_n(X)$. If $X$ is $(\lambda,R)$-linearly locally contractible around $x$, then
        \begin{equation}\label{eq:lower-bound-3}
            \norm{T}(B(x,r)) \geq c \cdot r^n,
        \end{equation}
        for every $r \in (0, R)$. Here, $c$ depends only on $\lambda$ and $n$. 
    \end{lemma}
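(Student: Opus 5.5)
We follow the strategy of the corresponding lower bound in \cite{BMW}, localized to the ball around $x$. Fix $r\in(0,R)$ and set $s=r/(C_0\lambda)$ for a constant $C_0$ depending only on $n$ and $\lambda$, to be chosen. Suppose, for contradiction, that $\norm{T}(B(x,r))<c\, r^n$ with $c$ small. The plan is to slice $T$ with the distance function $\varrho(y)=d(x,y)$. By the slicing inequality, there is $t\in(s,2s)$ such that $T_t:=T\on B(x,t)\in\bI_n(X)$ and
$$\bM(\partial(T\on B(x,t)))\le s^{-1}\norm{T}(B(x,2s))\le c\,C_0\lambda\, r^{n-1}.$$
Since $\partial T=0$, we have $\partial T_t=\langle T,\varrho,t\rangle$, with support in $\varrho^{-1}(t)$.

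Next we embed $X\subset E(X)$ and apply Theorem~\ref{thme: filling-inj-spaces} to the cycle $\partial T_t$. This gives a filling $U\in\bI_n(E(X))$ with $\bM(U)\le D(cC_0\lambda r^{n-1})^{n/(n-1)}\le D' c^{n/(n-1)}r^n$, and with $\spt U$ inside the $D\bM(U)^{1/n}$-neighbourhood of $\spt\partial T_t$, a neighbourhood that is small compared to $s$ once $c$ is small. The cycle $Z:=T_t-U$ then has mass at most $2c\,r^n$ (for small $c$), and $\spt Z\subset B(x,3s)$ in $E(X)$.

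The main obstacle is transferring $Z$ back to $X$ so that degree theory applies. First we use linear local contractibility to build a continuous map $\sigma$ from a neighbourhood of $\spt U\cup \overline B(x,2s)$ in $E(X)$ into $B(x,\lambda\cdot 4s)\subset X$, equal to the identity on $X$ near $\partial T_t$. This uses Proposition~\ref{prop:eps-cont-extension} applied to a nearest-point type $\epsilon$-continuous map on a finite-dimensional approximation, since $\spt U$ may not have finite topological dimension. Then Lemma~\ref{lemma: lip-approx-easy} and a Lipschitz approximation give a Lipschitz map; only the pushforward of $U$ matters. We would rather avoid this by working with the test map directly. Choose a Lipschitz map $\pi\colon X\to S^n=M'$ that collapses the complement of $B(x,s/2)$ to a point and is a degree-one map of $X$ onto the sphere (degree $\pm1$, since $X$ is oriented and $B(x,s/2)$ contains a Euclidean-like neighbourhood via the homeomorphism). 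Extend $\pi$ to a Lipschitz map $\bar\pi\colon E(X)\to\R^{n+1}$ with values near $S^n$, and let $\Phi$ be the radial retraction, which is valid on the set where $\bar\pi$ stays away from $0$. Because $\spt U$ lies in a thin neighbourhood of $\varrho^{-1}(t)$, where $\pi$ equals the collapse point, $\bar\pi$ stays near that point on $\spt U$ and $\Phi\circ\bar\pi$ is defined there.

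With this in place, Definition~\ref{def: metric fundamental class}(a) gives $\pi_\#T=\pm\bb{S^n}$. Since $\pi$ is constant outside $B(x,s/2)$, we get $\pi_\#T_t=\pm\bb{S^n}$, and hence $(\Phi\circ\bar\pi)_\#Z=\pm\bb{S^n}+$ an $n$-current of zero mass. The term $(\Phi\circ\bar\pi)_\#U$ is an $n$-current supported in a small cap around the collapse point with zero boundary matching, so by the constancy theorem on $S^n$ it vanishes. Thus
$$\Ha^n(S^n)\le \bM((\Phi\circ\bar\pi)_\#Z)\le \Lip(\Phi\circ\bar\pi)^n\,2c\,r^n.$$
The difficulty is that $\Lip\pi\sim 1/s$ is required for the bound to be scale-invariant: a degree-one map onto $S^n$ that is constant off $B(x,s/2)$, has Lipschitz constant $\lesssim_{n,\lambda}1/s$, and is the identity-like in the topological sense. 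To construct it, we use linear local contractibility. Take $\pi_0(y)=$ the map $S^n$-coning of $d(x,y)$ combined with a direction map; the direction component must come from a local degree-one map $B(x,s/2)\to\R^n$. We get that map by pulling back through a controlled contraction: the $\epsilon$-continuous/nerve construction of Proposition~\ref{prop:eps-cont-extension} at scale $s$, which produces maps with Lipschitz control after replacing $X$ by a nerve of a cover at scale $\sim s$, where Lipschitz bounds are automatic.

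The degree-one property is checked via the homotopy consequence recorded after Proposition~\ref{prop:eps-cont-extension}. Contradiction for $c$ small in terms of $n,\lambda$ yields \eqref{eq:lower-bound-3}. This construction of the scale-invariant degree-one test map is where I expect the real work to lie.
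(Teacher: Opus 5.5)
\textbf{There is a genuine gap, at the step you flag yourself.} Everything rests on a Lipschitz map $\pi\colon X\to S^n$ that has degree $\pm1$, is constant off $B(x,s/2)$, and satisfies $\Lip(\pi)\lesssim_{n,\lambda}1/s$. You never construct it, and the hypotheses give no handle on it. A collapse map built from the homeomorphism $X\cong M$ and then made Lipschitz via Lemma~\ref{lemma: lip-approx-easy} has no quantitative control.

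The nerve idea does not help either. A partition-of-unity map into the nerve of a cover at scale $s$ is $\lesssim 1/s$-Lipschitz only if the cover has bounded multiplicity and Lebesgue number $\gtrsim s$. That is a doubling or Nagata-type property, and linear local contractibility around one point does not provide it. Moreover, Proposition~\ref{prop:eps-cont-extension} produces continuous maps \emph{into} the contractible region, not controlled maps out of $X$.

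Note also that if such a $\pi$ existed, your slicing and filling would be superfluous: $\pi_\#(T\on B(x,s/2))=\pm\bb{S^n}$ directly gives $\Ha^n(S^n)\le \Lip(\pi)^n\norm{T}(B(x,s/2))$. So the proposal has in effect replaced the lemma by an unproved quantitative topological statement.

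\textbf{The missing idea} is that no Lipschitz bound on the test map is needed. Smallness of mass only serves to localize the filling, and the contradiction is purely qualitative. The route you began and then abandoned is the one the paper takes.

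Let $V$ be a minimal filling of $\partial(T\on B(x,r))$. The density lower bound in Theorem~\ref{thme: filling-inj-spaces} forces $\spt V$ into the $c'r$-neighbourhood of the sphere $\{d(x,\cdot)=r\}$, with $c'\to0$ as $c\to0$. The same bound disposes of your dimension worry: it gives $\spt V=\textup{set}\,V$, which has finite $\Ha^n$, so $X\cup\spt V$ has topological dimension at most $n$.

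Proposition~\ref{prop:eps-cont-extension} then yields a continuous retraction of $X\cup\spt V$ onto $X$ that moves $\spt V$ by $\lesssim c'r$. Compose it with the homeomorphism $X\to M$ and take any Lipschitz approximation $\varphi$ of degree one; its Lipschitz constant is irrelevant. Then $W=\varphi_\#(T\on B(x,r)-V)$ is an $n$-cycle in $M$ supported in the image of a small ball around $x$, so it misses an open set and $W=0$ by the constancy theorem.

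On the other hand, $\varphi_\#V$ lives in the image of a thin annulus around the sphere. Meanwhile $\varphi_\#(T\on B(x,r))=\bb{M}-\varphi_\#(T\on(X\setminus B(x,r)))$ has support containing the image of $B(x,r/2)$, so $W\neq0$. This contradiction is what should replace your scale-invariant test map.
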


    The argument essentially follows the proof of \cite[Proposition 5.7]{BMW}. 

    \begin{proof}
        By the slicing inequality
        $$ \int_0^r\mass(\partial(T\mres B(x,s)))\,ds \leq \norm{T}(B(x,r)),$$ 
        for all $r>0$. Therefore, it suffices to prove that there exists a constant $c$, depending only on $\lambda$ and $n$, such that
        \begin{equation}\label{eq:lower-bound-slices}
            \mass(\partial(T\mres B(y,r)))\geq c r^{n-1},
        \end{equation}
         for almost every $r\in \big(0, R/2\big)$. Let $r\in \big(0, R/2\big)$ be such that $T\mres B(x,r)\in\bI_n(X)$ and $\partial T\mres B(x,r)$ is supported in $\{y\in X: d(x, y)=r\}$. Suppose that \eqref{eq:lower-bound-slices} is not true for $c= n(4(5Q+3)D^{1+\frac{1}{n}})^{-(n-1)}$. Here, $D\geq1$ is the constant from Theorem \ref{thme: filling-inj-spaces} for $k = n-1$ and $Q\geq 1$ is the constant from Proposition \ref{prop:eps-cont-extension} for $\lambda$ and $n$. Embed $X$ into $E(X)$ and let $V\in \bI_n(E(X))$ be a minimal filling of $U= \partial(T\mres B(x,r))$ given by Theorem \ref{thme: filling-inj-spaces}. Therefore,
         $$\mass(V) \leq D \mass(U)^{\frac{n}{n-1}} <Dcr^n \quad \text{ and } \quad \norm{V}(B(y, r))\geq D^{-n}r^{n}$$
         for each \(y\in \spt V\) and all \(r \in (0, d(y, \spt U))\). It follows that
         \begin{equation}\label{eq:control-support-V}
              d_H(\spt V, \spt U)\leq c'\cdot r,
         \end{equation}
        where \(d_H\) denotes the Hausdorff distance and $c' = D^{1+\frac{1}{n}}\cdot (c n)^{\frac{1}{n-1}}$. Furthermore, we have $\textup{set}V = \spt V$. We conclude that the Hausdorff dimension of $\spt V$ is bounded by $n$ and thus, its topological dimension is bounded by $n$ as well. Let $Y = X \cup \spt V$ and let $\pi_0\colon Y \to X$ be a map such that $\pi_0(x) =x$ for all $x \in X$ and $d(\pi_0(y), y) \leq 2 d(X,y)$ for all $y \in \spt V$. Using \eqref{eq:control-support-V}, it is not difficult to show that $\pi_0$ is $(5c'r)$-continuous. Recall that $B(x,R)$ is $(\lambda,R)$-linearly locally contractible. Since $5c'r< Q^{-1}R$, Proposition \ref{prop:eps-cont-extension} implies that there exists a continuous map $\pi \colon Y \to X$ satisfying $\pi(x) = x$ for all $x \in X$ and $d(\pi(y),\pi_0(y)) \leq 5Qc'r$ for all $y \in \spt V$. Therefore,
        $$d(\pi(y) ,y) \leq d(\pi(y),\pi_0(y))+d(\pi_0(y),y)) \leq (5Q+3)c'r$$
        for each $y \in \spt V$. Thus, by our choice of $c$, we have 
        \begin{equation}\label{eq:property-retraction-eps-cont}
            d_H(\pi(\spt V), \spt U)\leq \frac{r}{4}.
        \end{equation}
        Let $\varrho\colon X \to M$ be a homeomorphism of degree one, where $M$ is a closed, oriented, smooth $n$-manifold. Using that $\varrho$ is a homeomorphism and $X$ is compact, we conclude that there exists $\epsilon\in (0,r/2)$ such that
        \begin{equation}\label{eq: llc-lower-bound-1}
            N_\epsilon(\varrho(A) )\subset \varrho\big(N_{\frac{r}{4}}(A)\big)
        \end{equation}
        for all $A \subset X$. Furthermore, by \cite[Lemma 5.8]{BMW}, and by decreasing $\epsilon$ if necessary, every Lipschitz map $\varphi \colon Y \to M$ with $d(\varphi,\varrho\circ \pi)< \epsilon$ satisfies
        \begin{equation}\label{eq: llc-lower-bound-2}
            \varphi(X \setminus B(x,r)) \subset \varrho(X \setminus B(x,r/2)).
        \end{equation}
        Lemma \ref{lemma: lip-approx-easy} and Lemma \ref{lemma: lip-retract-homotopies} imply that there exists $\varphi \colon Y \to M$ Lipschitz with $d(\varphi,\varrho\circ \pi)< \epsilon$ and $\deg(\varphi) =1$. Set $ W = \varphi_\#(T\on B(x,r)-V) \in \bI_n(M)$ and notice that $\partial W = 0$. It follows from \eqref{eq:property-retraction-eps-cont} together with \eqref{eq: llc-lower-bound-1} that
        \begin{align*}
            \spt (\varphi_\#V) &\subset N_\epsilon(\varrho(\pi(\spt V)))  \subset \varrho\big(N_\frac{r}{4}(\pi(\spt V))\big)
            \\
            &\subset \varrho\left(\left\{y\in X: \frac{r}{2}<d(x,y)<\frac{3r}{2}\right\}\right).
        \end{align*}
        Therefore, $\spt W \subset \varrho(B(x,r)) \neq M$ and the constancy theorem in $M$ ,\cite[Corollary~3.13]{federer-1960}, implies that $W = 0$. Furthermore,
        $$ \varphi_\#(T\on B(x,r)) = \bb{M} - \varphi_\#(T\on (X\setminus B(x,r)))$$ 
        and, hence by \eqref{eq: llc-lower-bound-2}
        $$ \spt\big(\varphi_\#(T\on (X\setminus B(x,r))\big) \subset \varphi(X\setminus B(x,r))\subset \varrho(X\setminus B(x,r/2)).$$ 
        We conclude that $\varrho(B(x,r/2))\subset \spt\big(\varphi_\#(T\mres B(x,r))\big)$ and thus, $W \neq 0$. This yields a contradiction and completes the proof.
    \end{proof}

    We can now prove the existence of a metric fundamental class for compact, oriented metric manifolds.
    
    \begin{proof}[Proof of Theorem \ref{thme: existence-orient-llc}]
        Let $X$ be a metric space with Hausdorff $n$-measure that is almost everywhere linearly locally contractible and homeomorphic to a compact, oriented smooth $n$-manifold. We first prove the theorem under the additional assumption that $X$ has no boundary. Let $T\in \bI_n(X)$ be the integral $n$-cycle given by Lemma \ref{lemma: existence-weak-llc}. Since $X$ is almost everywhere linearly locally contractible, Lemma \ref{lemma: lower-bound-current} implies that
        $$\liminf_{r\to 0}\frac{\norm{T}(B(x,r))}{r^n}>0$$
        for almost all $x \in X$. Therefore, the characteristic set of $T$ is equal to $X$. Furthermore, $T$ is given by a parametrization $\{\varphi_i,K_i,\theta_i\}$ with $|\theta_i(x)|=1$ for almost all $x \in K_i$ and every $i \in \N$. It follows that $\norm{T} = \lambda \Ha^n$, where $\lambda\in L^1(X)$ satisfies $n^{-n/2}\leq \lambda \leq n^{n/2}$. Finally, \cite[Proposition 5.5]{BMW} implies that $T$ is a generator of $H_n^\textup{IC}(X)$. This completes the proof in case $X$ has no boundary.
        \medskip

        Now, suppose that $X$ has boundary and $\Ha^n(\partial X ) = 0$. Let $\hat{X}$ be the closed manifold obtained by gluing two copies of $X$ along their boundaries and equip $\hat{X}$ with the quotient metric; see Definition \ref{def: gluing-boundary}. It follows from Lemma \ref{lemma: ae-llc-bdry-double-ae-llc} that $\hat{X}$ is almost everywhere linearly locally contractible. Therefore, by the first part of the proof $\hat{X}$ has a metric fundamental class $\hat{T}\in \bI_n(\hat{X})$ generating $H_n^\textup{IC}(\hat{X})$ and satisfying
        $$n^{-n/2}\Ha_{\hat{X}}^n \leq \norm{\hat{T}} \leq n^{n/2}\Ha_{\hat{X}}^n.$$
        Let $\iota\colon X \to \hat{X}$ an isometric embedding that identifies $X$ with one of its copies in $\hat{X}$. By Lemma \ref{lemma: fund-class-closed-gives-fund-class-bdry} the $n$-current $T = \iota^{-1}_{\#}(\hat{T} \on \iota(X)) \in \cI_n(X)$ defines a metric fundamental class of $X$ satisfying 
        $$n^{-n/2}\Ha_{X}^n \leq \norm{T} \leq n^{n/2}\Ha_{X}^n$$
        and whenever $S \in \bI_n(X)$ is such that $\spt S \subset \partial X$, then there exists $k\in \Z$ with $k\cdot T = S$. This completes the proof
    \end{proof}

    We conclude the section with the proofs of the rectifiability result for metric manifolds. 

    \begin{proof}[Proof of Corollary \ref{cor: metric-mfld-rect}]
        As mentioned before, we only assume that $X$ is almost everywhere linearly contractible. Let $\hat{X}$ be the manifold obtained by gluing two copies of $X$ along their boundary. It follows from Lemma \ref{lemma: ae-llc-bdry-double-ae-llc} that $\hat{X}$ is almost everywhere linearly contractible as well. Since $X$ embeds isometrically into $\hat{X}$, it suffices to prove the theorem for closed manifolds. If $X$ is orientable, it follows from Theorem \ref{thme: existence-orient-llc} that there exists $T \in \bI_n(X)$ with $\textup{set} T = X$ and thus, $X$ is $n$-rectifiable. Finally, if $X$ is non-orientable, then Lemma \ref{lemma: ae-llc-orientable-double-ae-lcc} implies that the orientable double cover $\tilde{X}$ is almost everywhere linearly locally contractible. By the above $\tilde{X}$ is $n$-rectifiable. We conclude that $X$ is $n$-rectifiable as well because $\tilde{X}$ locally isometric to $X$.
    \end{proof}

\section{Non-orientable manifolds}\label{sec: non-orient-mflds}
    In this section we provide the proofs for the different existence results for the metric fundamental class modulo $2$.

\subsection{Linear locally contractibility}
    We begin with the following lemma.

    \begin{lemma}\label{lemma: mod-2-pushfwd-0-cycle}
        Let $X$ be a compact metric space and let $T \in \bI_n(X)$ be an integral $n$-current satisfying $\varphi_\# T = 0 \mod 2$ for all Lipschitz maps $\varphi\colon X \to \R^n$. Then, $\partial T = 0 \mod 2$.
    \end{lemma}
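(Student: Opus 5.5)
Embed $X$ isometrically into its injective hull $E(X)\subset l^\infty$, which is compact and convex in the appropriate sense, so that the theory of Section~\ref{sec: flat-currents} applies. Let $S=\partial T\in \bI_{n-1}(E(X))$. It has finite $2$-mass. The plan is to show that $\cF_2(S)=0$ by approximating $S$ through finite-dimensional projections, exactly as in the proof of Proposition~\ref{prop: rectifiability-flat-current}.

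First, for every Lipschitz map $\varphi\colon E(X)\to \R^N$ we show that $\varphi_\# S = 0 \mod 2$. Since $\varphi_\#S=\partial(\varphi_\#T)$ and $\cF_2(\partial R)\le \cF_2(R)$, it suffices to show that $\varphi_\#T=0\mod 2$ in $\R^N$.
\begin{itemize}
\item For $N=n$ this is the hypothesis, applied to $\varphi|_X$.
\item For $N<n$ we compose with the inclusion $\R^N\hookrightarrow\R^n$. Since this inclusion is an isometric embedding with a $1$-Lipschitz retraction, vanishing mod $2$ transfers back.
\item For $N>n$ we decompose into coordinate projections. An $n$-dimensional integer rectifiable current in $\R^N$ is determined mod $2$ by its pushforwards under orthogonal projections $p\colon\R^N\to\R^n$ onto $n$-planes. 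More concretely, $\varphi_\#T$ is rectifiable with multiplicity $\theta$ on a rectifiable set with approximate tangent planes. If $\theta$ were odd on a set of positive measure, then for a suitable projection $p$ (a generic one in the Grassmannian, with non-degenerate restriction to tangent planes on a positive measure piece), $p_\#\varphi_\#T$ would have odd density somewhere. This uses the area formula and the fact that, under a generic projection, preimages are generically finite.
\end{itemize}
Since $p\circ\varphi$ is a Lipschitz map into $\R^n$, the hypothesis gives $p_\#\varphi_\#T=0\mod 2$. Hence $\theta$ is even almost everywhere and $\varphi_\#T=0\mod 2$.

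This third case is the step I expect to be the main obstacle. It needs a mod $2$ version of the statement that rectifiable currents in $\R^N$ are detected by projections. A cleaner route I would try first is the following. A reduction $R$ mod $2$ of $\varphi_\#T$ has multiplicities in $\{0,1\}$ (or $\{0,\pm1\}$), and $\cF_2$ vanishing is detected by $\norm{\varphi_\#T}_2=\norm{R}$. Choose a point of $\textup{set}R$ that is a density point with approximate tangent plane $W$, and let $p$ be the orthogonal projection onto $W$. Localizing by a small ball, $p_\#(R\on B)$ has density close to $\pm1$ near the image point. The hypothesis, combined with localization by restriction to balls around that point, then contradicts $\norm{p_\#\varphi_\#T}_2=0$. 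The localization has to be handled via restricting $T$ by $\varphi^{-1}(B)$, which remains legitimate since the hypothesis is only applied to the whole $T$. I would instead compose $p$ with a Lipschitz map collapsing the complement of the ball to a point, which keeps the map globally defined on $X$.

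Second, by Lemma~\ref{lemma: map-modulo-2} (applied to the $(n-1)$-current $S$, whose $2$-support lies in the compact set $X$), for every $\epsilon>0$ there are a finite-dimensional subspace $V\subset l^\infty$ and a $1$-Lipschitz map $\pi\colon l^\infty\to V$ with
\[
\cF_2(S-\pi_\#S)\le 2n\epsilon\,\cF_2(S).
\]
Identifying $V$ bi-Lipschitzly with some $\R^N$, the first step gives $\pi_\#S=0\mod 2$. Hence $\cF_2(S)\le 2n\epsilon\,\cF_2(S)$, and letting $\epsilon\to0$ yields $\cF_2(S)=0$, that is, $\partial T=0\mod 2$ (first in $E(X)$, and therefore in $X$ by the definition used in the paper).
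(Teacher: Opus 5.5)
Your first step claims something false, so the proof cannot be completed along this route. You reduce $\varphi_\# S = 0 \mod 2$ to the much stronger claim that $\varphi_\# T = 0 \mod 2$ for every Lipschitz $\varphi$ into every $\R^N$.

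For a counterexample, take $X = S^n\subset\R^{n+1}$ and $T=\bb{S^n}$. For every Lipschitz $\psi\colon X\to\R^n$, the current $\psi_\# T$ is an integral $n$-cycle with compact support in $\R^n$, hence zero by the constancy theorem. So the hypothesis holds. Yet the inclusion pushes $T$ forward to $\bb{S^n}\neq 0 \mod 2$ in $\R^{n+1}$.

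More generally, if your Step 1 were true, applying Lemma \ref{lemma: map-modulo-2} to $T$ instead of $S$ would give $T=0\mod 2$ for every $T$ satisfying the hypothesis. That contradicts the use of the lemma in Proposition \ref{prop: existence-fund-class-nonorientable}, where $\norm{T}_2\geq C^{-1}\Ha^n$.

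The concrete failure is your localization. The hypothesis only says $p_\#\varphi_\#T=0\mod 2$ globally. In the sphere example every projection has $0$ or $2$ preimages a.e., since the antipodal hemisphere cancels the one you localized at. Composing with a map that is constant off a ball does not help either: the localized piece then behaves like a mod $2$ cycle in $\R^n$ and is again killed by the constancy theorem. In top dimension the hypothesis carries no local information, because every mod $2$ cycle satisfies it.

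The missing idea is to run the finite-dimensional argument on the $(n-1)$-current $\partial T$ directly, never on $T$. In $\R^N$, $\norm{\partial T}$ is concentrated on an $(n-1)$-rectifiable set $E$. Since the target dimension $n$ exceeds $n-1$, Theorem \ref{thme: bate-jakub} gives a $1$-Lipschitz map $\varphi\colon\R^N\to\R^n$ with $\Ha^{n-1}(\varphi(E))=\Ha^{n-1}(E)$. Such a map is mass-preserving, and by a lemma of Bate it is bi-Lipschitz on countably many pieces $E_i$ covering $E$, with pairwise disjoint images.

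Because $\varphi_\#\partial T=\partial\varphi_\#T=0\mod 2$ and $\varphi$ is essentially injective on $E$, you can pull this back piece by piece to get $(\partial T)\on E_i=0\mod 2$ for each $i$, hence $\partial T=0\mod 2$. This is exactly what is impossible one dimension up: as the sphere shows, an $n$-rectifiable set cannot in general be mapped essentially injectively into $\R^n$.

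Your second step, passing from $l^\infty$ to finite-dimensional $V$ via Lemma \ref{lemma: map-modulo-2}, is fine and matches the paper. Its input should be the statement for $\pi_\#T$, whose hypothesis is inherited because $q\circ\pi$ is Lipschitz for every Lipschitz $q\colon V\to\R^n$, proved as above at the level of $\partial$.
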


    \begin{proof}
        We first prove the result under the additional assumption that $X$ is a subset of a finite dimensional vector space $V$. Clearly, the statement is unaffected by bi-Lipschitz changes of the metric on $V$. Therefore, we may assume that $V$ is equal to some $\R^N$ equipped with the standard Euclidean distance. Let $E\subset X$ be the $(n-1)$-rectifiable set that $\norm{\partial T}$ is concentrated on. By Theorem \ref{thme: bate-jakub} there exists a $1$-Lipschitz map $\varphi \colon \R^N \to \R^n$ such that $\Ha^{n-1}(\varphi(E)) = \Ha^{n-1}(E)$. It follows that $\varphi$ is mass-preserving, that is, $\Ha^{n-1}(\varphi(B)) = \Ha^{n-1}(B)$ for every Borel set $B\subset E$; see e.g. \cite[Lemma 3.1]{marti2025lipschitz}. Combining this with \cite[Lemma 7.2]{bate-perturb}, we conclude that there exist countably many pairwise disjoint subsets $E_i \subset E$ covering $E$ up to a set of $\Ha^{n-1}$-measure zero and such that $\varphi\colon E_i \to \R^n$ is bi-Lipschitz for each $i\in \N$ and the $\varphi(E_i)$ are pairwise disjoint as well. Hence, 
        $$\partial T = \sum_i (\partial T)\on E_i=\sum_i  \varphi_\#^{-1}(\varphi_\#(\partial T)\on E_i ).$$
        Since $\varphi_\#\partial T = 0 \mod 2$ and the $\varphi(E_i)$ are pairwise disjoint, we have $\varphi_\#(\partial T)\on E_i  = 0 \mod 2$ for each $i \in \N$ and in particular, $\partial T =  0 \mod 2$. We now prove the general statement. Embed $X$ into $l^\infty$. It follows from Lemma \ref{lemma: map-modulo-2} that there exist countably many finite dimensional linear subspaces $V_i \subset l^\infty$ and $1$-Lipschitz maps $\pi_i \colon X \to V_i$ such that $\cF_2(\partial T - \pi_{i\#} \partial T) \to 0$ as $i \to \infty$. Fix $i \in \N$ for the moment. If $\varphi\colon V_i \to \R^n$ is Lipschitz, then there exists a Lipschitz extension $\tilde{\varphi}\colon l^\infty \to \R^n$ of $\varphi$ and we have $\varphi_\#(\pi_{i\#}T) = (\tilde{\varphi}\circ \pi_i)_\# T= 0 \mod 2$. Therefore, $\varphi_\#(\pi_{i\#}T)=0\mod 2$ for every Lipschitz map $\varphi\colon V_i \to \R^n$. By the first part of the proof this implies $\partial (\pi_{i\#}T) = 0 \mod 2$. We conclude that $\partial T = 0 \mod 2$.
    \end{proof}
    
    Next, we prove the existence of a metric fundamental class modulo $2$ in a closed metric manifold.

    \begin{proposition}\label{prop: existence-fund-class-nonorientable}
        Let $X$ be a closed, non-orientable metric $n$-manifold that is almost everywhere linearly locally contractible. Then, $X$ has a metric fundamental class $T\in \bI_n(X)$ modulo $2$ satisfying
        $$C^{-1} \Ha^n \leq \norm{T}_2= \norm{T}\leq C \Ha^n.$$
        Here, $C>0$ depends only on $n$.
    \end{proposition}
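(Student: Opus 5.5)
The plan is to pass to the orientable double cover, build the current there, and push it back down; we then work directly on the rectifiable part of $X$, as in Lemma \ref{lemma: existence-weak-llc}, with $\Z/2\Z$ degrees.

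\textbf{Step 1 (rectifiability and construction).} Let $\tilde X$ be the orientable double cover with the metric from Section \ref{sec: llc-mfld}, and $p\colon\tilde X\to X$ the covering map, which is a local isometry. By Lemma \ref{lemma: ae-llc-orientable-double-ae-lcc}, $\tilde X$ is connected, closed, oriented and almost everywhere linearly locally contractible. Theorem \ref{thme: existence-orient-llc} (closed case) then yields $\tilde T$ with $\textup{set}\tilde T=\tilde X$, so $\tilde X$ is $n$-rectifiable, and hence so is $X$ (Corollary \ref{cor: metric-mfld-rect}). Choose pairwise disjoint bi-Lipschitz pieces $\varphi_i\colon K_i\to X$ covering $X$ up to a null set. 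Shrinking the pieces, we may assume each image lies in a linearly locally contractible set that is evenly covered by $p$. Define
$$T=\sum_i \varphi_{i\#}\bb{\mathbbm 1_{K_i}}.$$
Equivalently, $T$ is $p_\#(\tilde T\on \tilde U)$ with $\tilde U$ a Borel choice of one sheet over each piece. Since the multiplicities are $\pm1$, both the upper and lower bounds follow as in the proof of Theorem \ref{thme: existence-orient-llc}:
$$\norm{T}=\norm{T}_2=\lambda\Ha^n, \qquad n^{-n/2}\le\lambda\le n^{n/2}.$$
Here $\norm{T}_2=\norm{T}$ holds because $T$ is its own reduction modulo $2$ (Lemma \ref{lemma: mass-equal-mod-mass-for-reduction}). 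The one point to check is that $T\in\bI_n(X)$, i.e.\ that $\partial T$ has finite mass. Since $p_\#$ preserves normality, I would rather argue $\partial T=\partial p_\#(\tilde T\on\tilde U)$ for a \emph{nice} choice of $\tilde U$: take $\tilde U$ the complement of a lift of a codimension-one "cut" $\varrho(\Sigma)$, where $\Sigma\subset M$ is a smooth hypersurface with $M\setminus\Sigma$ orientable. Then $T=p_\#(\tilde T\on\tilde U)$, and by slicing we can choose the cut so that $\tilde T\on\tilde U$ is integral. This may require perturbing the cut.

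\textbf{Step 2 (cycle mod $2$ and degree).} By Lemma \ref{lemma: mod-2-pushfwd-0-cycle}, it suffices to show $\pi_\#T=0\bmod 2$ for each Lipschitz $\pi\colon X\to\R^n$. First suppose $\pi$ also satisfies $\Ha^n(\pi(\text{null part}))=0$; by Theorem \ref{thme: bate-perturb} and the flat-homotopy estimate \eqref{eq: flat-homotopy} this can be arranged by approximation. The area formula then gives
$$\pi_\#T=\bb{\theta}, \qquad \theta(y)=\sum_{x\in\pi^{-1}(y)}\pm1\equiv\#\pi^{-1}(y)\pmod 2.$$
By Corollary \ref{cor: number-preimage-even}, $\#\pi^{-1}(y)$ is even for a.e.\ $y$, so $\pi_\#T=0\bmod2$. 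For $\psi\colon X\to M$ the argument is the same: the parity remark after Corollary \ref{cor: number-preimage-even} gives
$$\psi_\#T=\deg(\psi,\Z/2\Z)\bb M\bmod2,$$
after first approximating by maps with $\Ha^n(\psi_i(\text{null part}))=0$ and using Lemma \ref{lemma: lip-retract-homotopies} to see that the mod-$2$ degree is preserved.

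\textbf{Main obstacle.} I expect the hardest step to be integrality, namely finite mass of $\partial T$ in Step 1. Moving along the cut introduces a boundary that is only zero modulo $2$, equal to twice the slice. My plan is to pick the cut by slicing $\tilde T$ with a Lipschitz function whose level set lifts $\varrho(\Sigma)$, so that the slicing inequality gives a finite-mass boundary for almost every level.
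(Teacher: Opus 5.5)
\emph{Where you agree with the paper.} Your Step 2 (parity of $\#\pi^{-1}(y)$ via Corollary \ref{cor: number-preimage-even}, then Lemma \ref{lemma: mod-2-pushfwd-0-cycle}) and your mass bounds are what the paper does. The Bate approximation there is unnecessary: $X$ is rectifiable, so the uncovered set is $\Ha^n$-null, and so is its Lipschitz image.

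\emph{The gap.} The problem is the integrality step you flag, and your remedy does not close it. First, $\sum_i\varphi_{i\#}\bb{\mathbbm 1_{K_i}}$ uses countably many charts with uncontrolled relative signs, so it is only in $\cI_n(X)$. It agrees with $p_\#(\tilde T\on\tilde U)$ only modulo $2$, not as currents, and its boundary can have infinite mass. You cannot skip integrality: the statement asks for $T\in\bI_n(X)$, and Lemma \ref{lemma: mod-2-pushfwd-0-cycle} needs $\norm{\partial T}$ to be a finite measure concentrated on an $(n-1)$-rectifiable set.

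Second, slicing cannot produce a single global cut. Slicing makes $\tilde T\on\{u<t\}$ integral only for almost every $t$. But requiring $\tilde U$ to be a fundamental domain for the deck involution $\sigma$ (so that $p_\#(\tilde T\on\tilde U)$ has multiplicity $\pm1$ almost everywhere) pins down one specific level. If $h\circ\sigma=-h$, only $\{h>0\}$ qualifies. For $t\neq 0$, the sets $\{h>t\}$ and $\sigma(\{h>t\})=\{h<-t\}$ either miss the band $\{|h|\le|t|\}$ or both cover $\{|h|<|t|\}$, giving multiplicity $0$ or $\pm2$ there. If you take $u=d(\cdot,\varrho(\Sigma))$ instead, a generic level makes the restriction to one sheet over $\{u>t\}$ integral. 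But the band $\{u\le t\}$ is left uncovered, and over it you face the original problem again. Moreover, $\varrho(\Sigma)$ is only a topological hypersurface and may even have positive $\Ha^n$-measure. So there is no family of admissible cuts from which to pick a generic one.

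\emph{The missing idea.} You do not need a globally consistent choice of sheets. Cover $X$ by finitely many balls $B_i=B(x_i,r_i)$, with $r_i$ below the scale at which $p^{-1}(B(x,r))$ splits into two isometric copies at positive distance. Set $U_1=B_1$ and $U_i=B_i\setminus\bigcup_{j<i}B_j$. Choose the radii one after another at generic values, so that $\tilde T\on p^{-1}(B_i)$ and its restrictions to the sheets over the $U_i$ are all integral. Then choose one sheet $\tilde U_i$ over each $U_i$ independently, and set $T=\sum_{i=1}^N p_\#(\tilde T\on\tilde U_i)$.

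This is a finite sum of integral currents, so it lies in $\bI_n(X)$. It has multiplicity $\pm1$, so your mass bounds and $\norm{T}_2=\norm{T}$ go through. Where sheets are mismatched along interfaces, the boundary is twice a slice. The condition $\partial T=0 \mod 2$ is then verified exactly by your Step 2.
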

    
    \begin{proof}
        Let $\tilde{X}$ be the orientable double cover of $X$ and let $\pi \colon \tilde{X} \to X$ be the covering map. It follows from Lemma \ref{lemma: ae-llc-orientable-double-ae-lcc} that $\tilde{X}$ is almost everywhere linearly locally contractible. Hence, Theorem \ref{thme: existence-orient-llc} implies that $\tilde{X}$ has a metric fundamental class $\tilde{T}\in \bI_n(\tilde{X})$ satisfying
        \begin{equation}\label{eq: mass-measure-orientable-double-cover}
            C^{-1}\cdot \Ha_{\tilde{X}}^n\leq \norm{\tilde{T}}\leq C\cdot \Ha_{\tilde{X}}^n,
        \end{equation}
        where $C>0$ depends only on $n$. There exists $R>0$ such that for each $x\in X$ and all $0<r<R$, the preimage $\pi^{-1}(B(x,r))$ consists of two components and $\pi$ restricted either component is an isometry. Let $x_1,\dots,x_N\in X$ and $r_1,\dots,r_N \in (0,R)$ be a collection of points and radii with the following properties. The balls $B_i = B(x_i,r_i)$ cover $X$ and for each $i = 1,\dots, N$, the slice $\langle \tilde{T}, \pi_i,r_i\rangle$ is an integral $(n-1)$-cycle, where $\pi_i \colon \tilde{X}\to \R, \;x\mapsto d(x_i,\pi(x))$. Notice that for each $i \in \N$,
        $$\partial(\tilde{T}\on \pi^{-1}(B_i) )=\partial( \tilde{T}\on B(x_i^+,r_i)) +\partial(\tilde{T}\on B(x_i^-,r_i) ) = \langle \tilde{T}, \pi_i,r_i\rangle,$$
        where $\pi^{-1}(x_i)= \{x_i^+,x_i^-\}$ and hence, $\tilde{T}\on \pi^{-1}(B_i) \in \bI_n(\tilde{X})$. Let $U_1 = B_1$ and for $i=2,\dots,N$ set $U_i = B_i \setminus \bigcup_{j=1}^{i-1}U_j$. It follows that $\{U_i\}_{i=1}^N$ is a partition of $X$ and for each $i = 1,\dots, N$ the preimage $\pi^{-1}(U_i)$ consists of two components $U_i^+,U_i^-\subset \tilde{X}$ such that $\tilde{T}\on U_i^+,\tilde{T}\on U_i^-\in \bI_n(\tilde{X})$. For each $i = 1,\dots, N$ choose one component $\tilde{U}_i$ of $\pi^{-1}(U_i)$ and define 
        $$T = \sum_i^N \pi_\#(\tilde{T}\on \tilde{U}_i) \in \bI_n(X).$$
        Since the $\tilde{U}_i$ are pairwise disjoint and $\pi$ is an isometry on each $\tilde{U}_i$ it follows from \eqref{eq: mass-measure-orientable-double-cover} that
        $$C^{-1}\Ha^n_X \leq \norm{T} \leq C \Ha^n_X.$$
        Moreover, by Theorem \ref{thme: existence-orient-llc} there exists a parametrization $\{\varphi_j,K_j,\theta_j\}$ of $\tilde{T}$ such that $|\theta_j(x)| =1$ for almost all $x \in K_j$ and every $j \in \N$. We conclude that the integral $n$-current $T$ is already a reduction modulo $2$ and hence, Lemma \ref{lemma: mass-equal-mod-mass-for-reduction} implies that $\norm{T}_2 = \norm{T}$. Next, we show that $\partial T = 0 \mod 2$. Let $f \colon X \to \R^n$ be Lipschitz. By Corollary \ref{cor: number-preimage-even} and Corollary \ref{cor: metric-mfld-rect} we have that $\# f^{-1}(z)$ is even for almost all $z \in \R^n$. Therefore, there exists a parametrization $\{(\varphi_{k,l},\varphi_N),(K_{k,l},N),(\theta_{k,l},\theta_N)\}$ of $T$ such that $|\theta_{k,l}(y)|=1$ for almost all $y \in K_{k,l}$ and every $k,l\in \N$ and $\Ha^n(f(N))=0$ as well as
        $$\bigcup_l \varphi_{k,l}(K_{k,l}) = f^{-1}(\{z \in \R^n \colon \#f^{-1}(z) = 2k\}$$
        for all $k \in \N$. Notice that changing the sign of some $\theta_{k,l}$ on a subset of $K_{k,l}$ induces the same current modulo $2$. Thus, we may assume that for each $k \in \N$ we have $$f_\#\left(\sum_l\varphi_{k,l\#}\bb{\theta_{k,l}}\right) = 2k \cdot \bb{A_k},$$
        where $A_k =\{z \in \R^n \colon \#f^{-1}(z) = 2k\}$ and therefore 
        $$f_\# T = 2S = 2 \left(\sum_{k=1}^\infty k \cdot\bb{A_k}\right).$$
        Since $f$ is Lipschitz, the area formula implies that $S \in \cI_n(\R^n)$ has finite mass and hence, $f_\# T = 0 \mod 2$. Lemma \ref{lemma: mod-2-pushfwd-0-cycle} implies that $\partial T = 0 \mod 2$. Finally, let $M$ be a smooth $n$-manifold homeomorphic to $X$ and let $\psi\colon X \to M$ be Lipschitz. It can be shown analogously as in Corollary \ref{cor: number-preimage-even}, that for almost all $z \in M$ the number of preimages $\#\psi^{-1}(z)$ is even if $\deg(\psi,\Z/2\Z) = [0]$ and odd if $\deg(\psi,\Z/2\Z) = [1]$. The same argument as above, using a parametrization of $T$ adapted to $\psi$, shows that there exists $S \in \cI_n(M)$ with $\psi_\# T = \deg(\psi,\Z/2\Z) \cdot \bb{M} +2S$. In particular, $\psi_\# T = \deg(\psi,\Z/2\Z) \cdot \bb{M} \mod 2$. This completes the proof. 
    \end{proof}

    Finally, we prove the uniqueness of the metric fundamental class modulo $2$ in a non-orientable metric
    manifold. The proof is an adaptation of \cite[Proposition 5.5]{BMW}.

    \begin{lemma}\label{lemma: uniqueness-mod-2}
        Let $X$ be a closed, non-orientable metric $n$-manifold with finite Hausdorff $n$-measure. Suppose that $X$ has a metric fundamental class $T\in \cI_n(X)$ modulo $2$. Then, every $S\in \cI_n(X)$ with $\partial S =0 \mod 2$ satisfies $k T = S\mod 2$ for $k$ either equal to $0$ or $1$. 
    \end{lemma}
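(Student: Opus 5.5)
The plan is to adapt the argument of \cite[Proposition 5.5]{BMW}: compare $S$ with $T$ by pushing both forward to the model manifold $M$ under maps that are close to a homeomorphism, and use the uniqueness of $\bb{M}$ modulo $2$ in $M$. Let $\varrho\colon X\to M$ be a homeomorphism onto a closed non-orientable Riemannian manifold; it has $\deg(\varrho,\Z/2\Z)=[1]$. Embed $X$ into the compact injective hull $E(X)\subset l^\infty$, which is compact and convex, so that the $2$-mass measures, Lemma \ref{lemma: mass-equal-mod-mass-for-reduction} and the product and homotopy estimates of Section \ref{sec: currents} are available. Put $R=S-kT$, where $k\in\{0,1\}$ is chosen below; then $R\in\cI_n(X)$ and $\partial R=0\mod 2$. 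The goal is to show $\norm{R}_2=0$, which gives $R=0\mod 2$.

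\textbf{Choice of $k$.} For $\epsilon>0$, Lemma \ref{lemma: lip-approx-easy} gives a Lipschitz map $\varphi_\epsilon\colon X\to M$ with $d(\varphi_\epsilon,\varrho)<\epsilon$. For $\epsilon$ small, $\varphi_\epsilon$ is homotopic to $\varrho$, so $\deg(\varphi_\epsilon,\Z/2\Z)=[1]$ and hence $\varphi_{\epsilon\#}T=\bb{M}\mod 2$. Also $\partial(\varphi_{\epsilon\#}S)=0\mod 2$, so by uniqueness of the fundamental class modulo $2$ in $M$ we get $\varphi_{\epsilon\#}S=k_\epsilon\bb{M}\mod 2$ with $k_\epsilon\in\{0,1\}$. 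Any two such maps are connected by a small Lipschitz homotopy (Lemma \ref{lemma: lip-retract-homotopies}), and \eqref{eq: homotopy-boundary} together with $\partial S=0\mod 2$ shows that the two pushforwards differ by a boundary modulo $2$. An $n$-dimensional boundary in the $n$-manifold $M$ must vanish modulo $2$, so $k_\epsilon=k$ is independent of $\epsilon$. Consequently $\varphi_{\epsilon\#}R=0\mod 2$ for all small $\epsilon$.

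\textbf{Main step: $\norm{R}_2=0$.} Suppose instead that $\norm{R}_2(B(x,r))>0$ for some ball. Localise as in Lemma \ref{lemma: lower-bound-current}: by Lemma \ref{lemma: slicing-mod-p}, for almost every $r$ the slice $\partial(R\on B(x,r))$ is integral. I then need to construct Lipschitz maps $\psi\colon X\to M$, close to $\varrho$, that are injective enough near $x$ so that $\psi_\#(R\on B(x,r))$ keeps positive $2$-mass; the idea is to modify $\varphi_\epsilon$ on the rectifiable parts of $\spt_2 R$, using bi-Lipschitz pieces of $\textup{set}\,R$. This step is the main obstacle. Without linear local contractibility there is no a priori control on how $\varphi_\epsilon$ folds $\spt R$, so positive $2$-mass may fail to survive the pushforward.

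What I would try first is to avoid that problem and use $\varrho$ directly. Since $X$ is compact, $\varrho$ is uniformly continuous, and we can approximate $\varrho$ by maps $\varphi_\epsilon$. We already know $\varphi_{\epsilon\#}R=0\mod 2$ for every small $\epsilon$ and every such approximation. From this I would derive $\psi_\#R=0\mod 2$ for every Lipschitz $\psi\colon X\to\R^n$, by factoring $\psi$ through $M$ locally on small balls via charts and partitions of $X$ (as in the proof of Proposition \ref{prop: existence-fund-class-nonorientable}). Then I would argue as in Lemma \ref{lemma: mod-2-pushfwd-0-cycle}: Theorem \ref{thme: bate-jakub} gives maps that are injective on the rectifiable set $\textup{set}\,R^2$, so $\psi_\#R=0\mod 2$ for all such $\psi$ forces $R=0\mod 2$. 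The factorisation through charts is where I expect the real difficulty to lie.
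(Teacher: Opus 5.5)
\textbf{There is a genuine gap.} Your choice of $k$ is the same as the first step of the paper's proof. The main step, showing $R=S-kT=0\mod 2$, is not supplied, and the route you say you would try first cannot work. The property you aim to derive, $\psi_\#R=0\mod 2$ for every Lipschitz $\psi\colon X\to\R^n$, holds for every $R\in\cI_n(X)$ with $\partial R=0\mod 2$. Indeed, $\psi_\#R$ is a compactly supported integer rectifiable $n$-current in $\R^n$ with $\partial\psi_\#R=0\mod 2$, so it vanishes modulo $2$ by the constancy theorem. In particular $T$ itself has this property while $T\neq 0\mod 2$, so the property cannot force $R=0\mod 2$.

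The analogy with Lemma \ref{lemma: mod-2-pushfwd-0-cycle} breaks down because Theorem \ref{thme: bate-jakub} requires the target dimension to be strictly larger than the rectifiability dimension. In that lemma it is applied to the $(n-1)$-rectifiable set carrying $\norm{\partial T}$, with target $\R^n$. For the $n$-rectifiable set $\textup{set}\,R$ with target $\R^n$, there is in general no Lipschitz map that is injective up to a null set. For example, if $\textup{set}\,R$ is a closed $n$-manifold, almost every fibre has even cardinality, as in Corollary \ref{cor: number-preimage-even}. If you switch to targets $\R^m$ with $m>n$, the homotopy between $\psi$ and a map factoring through $M$ produces a genuine $(n+1)$-dimensional filling, and you are back to the actual problem. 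Your localisation idea is, as you note, uncontrolled without linear local contractibility. The lemma is used without that assumption (Theorem \ref{thme: existence-non-orient-surface}).

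\textbf{The missing idea} is to turn $f_\#R=0\mod 2$ into small fillings of $R$, and then to kill these using $\Ha^{n+1}(X)=0$.

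First, build the fillings. Embed $X\subset l^\infty$ and apply Lemma \ref{lemma: lip-approx-easy} twice to get Lipschitz maps $g\colon M\to l^\infty$ close to $\varrho^{-1}$ and $f\colon X\to M$ close to $\varrho$, with $d(g\circ f,\iota_X)<\epsilon$. By your first step $f_\#R=0\mod 2$, hence $(g\circ f)_\#R=0\mod 2$. Let $H$ be the straight-line homotopy from $g\circ f$ to $\iota_X$. Then $U=H_\#(\bb{0,1}\times R)$ satisfies $\partial U=R\mod 2$ and $\spt U\subset N_\epsilon(X)$.

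Second, kill them. Lemma \ref{lemma: map-modulo-2} reduces the problem to $Q'=\pi_{i\#}R$ in a finite-dimensional space $V_i$. There, deform $Q'$ into the $n$-skeleton of a cubical grid of side $\eta$ by radial projections whose centres avoid $Y=\pi_i(X)$; this is possible because $\Ha^{n+1}(Y)=0$. The resulting deformation map $p$ is Lipschitz and extends to a neighbourhood of $Y$. So it can be applied to a filling of $Q'$ supported close to $Y$. The pushed-forward filling is an $(n+1)$-current in an $n$-skeleton, hence zero, which gives $P=p_\#Q'=0\mod 2$. Since $\cF_2(Q'-P)\le C\eta\bM(Q')$ and $\eta$ is arbitrary, $Q'=0\mod 2$. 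Letting $i\to\infty$ gives $R=0\mod 2$.
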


    \begin{proof}
        Let $\varrho\colon X \to M$ be a homeomorphism, where $M$ is a closed non-orientable Riemannian manifold. By Lemma \ref{lemma: lip-retract-homotopies} there exists $\delta>0$ such that whenever $\varphi,\psi\colon X\to M$ are Lipschitz maps satisfying $d(\varphi,\psi)<2\delta$ then there exists a Lipschitz homotopy $H\colon [0,1]\times X\to M$ between $\varphi$ and $\psi$. It follows from \eqref{eq: homotopy-boundary} 
        $$\partial H_\#(\bb{0,1}\times S) + H_\#(\bb{0,1}\times\partial S) =  \varphi_\#S- \psi_\#S.$$
        Since $\cI_{n+1}(M) = 0$, we have $\partial H_\#(\bb{0,1}\times S)=0$. Furthermore, $\partial S = 0\mod 2$ and thus, $\varphi_\#S = \psi_\#S\mod 2$. By the uniqueness of $\bb{M}$, we conclude that there exists $k$ equal to either $0$ or $1$ such that $\varphi_\# S = k \cdot\bb{M} \mod 2$ for every Lipschitz map $\varphi\colon X \to M$ with $d(\varphi,\varrho) <\delta$. 
        Embed $X$ into $l^\infty$. We claim that for every $\epsilon>0$ there exists $U \in \cI_{n+1}(l^\infty)$ with $\spt U \subset N_\epsilon(X)$ and $U$ is a filling modulo $2$ of $S-kT$, that is, $\partial U = S- kT \mod 2$. Let $\epsilon>0$. We first apply Lemma \ref{lemma: lip-approx-easy} to $\varrho^{-1}$ and obtain a Lipschitz map $g\colon M \to l^\infty$ with $d(\varrho^{-1},g)<\epsilon/2$. Then we use Lemma \ref{lemma: lip-approx-easy} again to find $f\colon X \to M$ Lipschitz with $d(\varrho,f)<(\epsilon/2\Lip(g))$. It follows that
        \begin{equation*}\label{eq: uniq-mod-2-approxs}
            d(g\circ f,\varrho^{-1}\circ\varrho) \leq d(g\circ f,g\circ \varrho) + d(g\circ \varrho,\varrho^{-1}\circ\varrho) < \epsilon.
        \end{equation*}
        Set $Q =S- kT$. We may suppose that $\deg(f,\Z/2\Z)=1$. Therefore, since $T$ is a metric fundamental class modulo $2$, we have $f_\# Q =  0\mod 2$ and in particular, $(g\circ f)_\# Q = 0\mod 2$. Let $H\colon [0,1]\times X\to l^\infty$ be the straight line homotopy between $(g\circ f)$ and the inclusion $\iota_X$ of $X$ into $l^\infty$. Put $U = H_\#(\bb{0,1}\times Q)\in\cI_{n+1}(l^\infty)$. We have $H(t,x) \in N_\epsilon(X)$ for all $x\in X$ and every $t \in [0,1]$ and hence, $\spt U \subset N_\epsilon(X)$. Moreover, using \eqref{eq: homotopy-boundary} we get
        \begin{align*}
            \partial U 
            &= Q-(g\circ f)_\# Q - H_\#(\bb{0,1}\times\partial Q).
        \end{align*}
        Since $\partial Q = 0 \mod 2$ and $(g\circ f)_\# Q = 0\mod 2$ we conclude that $U$ is a filling modulo $2$ of $Q = kT - S$. This proves the claim. Finally, we show that $k T = S \mod 2$. Lemma \ref{lemma: map-modulo-2} implies that there exist countably many finite dimensional linear subspaces $V_i\subset l^\infty$ and $1$-Lipschitz maps $\pi_i\colon l^\infty\to V_i$ such that $\cF_2(Q -\pi_{i\#}Q) \to 0$ as $i \to 0$. Fix $i \in \N$ for the moment. Let $N = \dim V_i$, $Y = \pi_i(X)$ and $Q' = \pi_{i\#}Q\in \cI_{n}(V_i)$. It follows from the claim that for each $\epsilon>0$ there exists a filling $U$ modulo $2$ of $Q'$ satisfying $\spt U \subset N_\epsilon(Y) \subset V_i$. We apply a refined version of Federer and Fleming’s deformation theorem \cite[Theorem 4.2.$9^\nu$]{federer-gmt} for integer rectifiable currents mod $2$. More precisely, for $\eta>0$ we choose a cubical
        subdivision of $V_i$ into Euclidean cubes of side length $\eta$. Then we use radial projections to push $Q'$ first into the $(N-1)$-dimensional skeleton, then into the $(N-2)$-dimensional skeleton and so on till we arrive at the skeleton of dimension $n$. Since $\Ha^{n+1}(Y) = 0$ we can achieve this with projections that have their projection centers outside of $Y$; c.f. \cite[Chapter 3]{kinneberg} and \cite[Chapter 5]{marti2024characterization}. It follows that the resulting current $P$ is the pushforward of $Q'$ by a Lipschitz map $p$ defined on $Y$ and $\cF_2(Q' -P) \leq C \eta \bM(Q')$, where $C$ depends only on $N$. Because the $n$-skeleton of the cubical subdivision is an absolute Lipschitz neighborhood retract, we can extend $p$ to an open neighborhood of $Y$. Therefore, there exists filling $U$ modulo $2$ of $Q'$ such that the pushforward $p_\# U$ is well-defined. This implies that there exists an integer rectifiable current $U'\in \cI_{n+1}(V_i)$ in the $n$-skeleton with $\partial U' = P \mod 2$. However, every such $U'$ has to be zero and thus, $P = 0 \mod 2$. Since $\eta>0$ was arbitrary, we conclude that $\pi_{i\#} Q = Q' = 0 \mod 2$. Using that $\cF_2(Q -\pi_{i\#}Q) \to 0$ as $i \to 0$, we get $Q= 0\mod 2$ and in particular, $kT = S \mod 2$. This completes the proof. 
        

    \end{proof}

    The proof of Theorem \ref{thme: existence-non-orient-llc} is a direct consequence of the preceding results.

    \begin{proof}[Proof of Theorem \ref{thme: existence-non-orient-llc}]
        Let $X$ be a metric space with finite Hausdorff $n$-measure that is almost everywhere linearly contractible and homeomorphic to a compact, non-orientable smooth $n$-manifold. First, we assume that $X$ is closed. It follows from Proposition \ref{prop: existence-fund-class-nonorientable} that there exists $T \in \bI_n(X)$ with $\partial T = 0 \mod 2$ satisfying 
        \begin{equation}\label{eq: existence-non-orient-llc-mass-ineq}
            C^{-1}\Ha^n \leq \norm{T}=\norm{T}_2 \leq C \Ha^n,
        \end{equation}
        where $C>0$ depends only on $n$. By Lemma \ref{lemma: uniqueness-mod-2} the current $T$ is unique in the following sense. Whenever $S \in \cI_n(X)$ satisfies $\partial S = 0 \mod 2$, then $k T  = S \mod 2$ for $k$ equal to either $0$ or $1$. Now, suppose that $X$ has boundary and satisfies $\Ha^n(\partial X) = 0$. Let $\hat{X}$ be the manifold obtained by gluing two copies of $X$ along the boundary. Lemma \ref{lemma: ae-llc-bdry-double-ae-llc} implies that $\hat{X}$ is almost everywhere linearly contractible as well. Let $\hat{T}\in \bI_n(\hat{X})$ be the metric fundamental class modulo $2$ of $\hat{X}$ obtained in the first part of the proof. It follows from Lemma \ref{lemma: fund-class-closed-gives-fund-class-bdry} that $(\iota^{-1})_\#\hat{T}\on\iota(X)$ defines a unique metric fundamental class modulo $2$ in $X$. Here, $\iota \colon X\to \hat{X}$ is the embedding that identifies $X$ with one of its copies in $\hat{X}$. Since $\iota$ is an isometric embedding, $T$ satisfies \eqref{eq: existence-non-orient-llc-mass-ineq} as well. This concludes the proof. 
    \end{proof}

\subsection{Nagata dimension}  
    Throughout this section, let $X$ denote a metric space with finite Hausdorff $n$-measure and Nagata dimension less than $N$. Furthermore, let $\varrho\colon M \to X$ be a homeomorphism, where $M$ is a compact, non-orientable Riemannian $n$-manifold. Embed $X$ into its injective hull $E(X)$.

    \begin{lemma}\label{lemma: existence-non-orient-nagata-sequence}
        Suppose that $M$ is closed. Then, for every $\epsilon>0$ there exists a Lipschitz map $\eta\colon M \to E(X)$ such that $d(\varrho,\eta) \leq \epsilon$ and $\bM_2(\eta_\# \bb{M}) \leq C \Ha^n(X)$. Here, $C$ depends only on the data of $X$.
    \end{lemma}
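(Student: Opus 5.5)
We use the factorization of Theorem \ref{thme: nagata-factorization}: for a small parameter $\delta>0$ (to be tied to $\epsilon$) let $\Sigma$, $\varphi\colon X\to\Sigma$ and $\psi\colon\Sigma\to E(X)$ be as there, with $\Sigma$ of dimension $\le n$, simplices of side length $\delta$, $\Ha^n(\varphi(X))\le C\Ha^n(X)$ and $d(x,\psi(\varphi(x)))\le C\delta$. The map we aim for is $\eta=\psi\circ p\circ h$, where $h\colon M\to\Sigma$ is a Lipschitz approximation of the continuous map $\varphi\circ\varrho$, and $p\colon\Sigma\to\Sigma$ is a deformation that pushes the image of $h$ into $\varphi(X)$ up to a set of $\Ha^n$-measure zero. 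The closeness estimate will come from $\psi\circ\varphi\approx\mathrm{id}$. The mass estimate will come from the fact that $(p\circ h)_\#\bb{M}$ is a mod $2$ current whose $2$-mass measure is supported essentially in $\varphi(X)$ with multiplicity at most one.

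\textbf{Step 1: Lipschitz approximation.} The finite complex $\Sigma$ is an absolute Lipschitz neighborhood retract, so Lemma \ref{lemma: lip-approx-easy} gives a Lipschitz map $h\colon M\to\Sigma$ with $d(h,\varphi\circ\varrho)<\delta$. We may also take $h(y)$ and $\varphi(\varrho(y))$ to lie in a common simplex, either by composing with a simplicial-neighbourhood retraction or by choosing $\delta$ smaller than the relevant Lebesgue number. Since $\psi$ is $C$-Lipschitz on each simplex and simplices have diameter $\delta$, we get
\[
d(\psi(h(y)),\varrho(y))\le d(\psi(h(y)),\psi(\varphi(\varrho(y))))+C\delta\le C'\delta .
\]

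\textbf{Step 2: pushing off the complement of $\varphi(X)$.} We handle each $n$-simplex $\Delta$ of $\Sigma$ in turn. If $\Delta\not\subset\varphi(X)$, we may instead want to retract $\Delta\setminus\{z\}$ onto $\partial\Delta$ for a point $z\in\inte\Delta\setminus \varphi(X)$. A cleaner approach is to work directly at the level of currents. Set $W=h_\#\bb{M}$, an integral current in $\Sigma$ with $\partial W=0\mod 2$. Restricted to the interior of an $n$-simplex, a top-dimensional current that is a cycle mod $2$ is constant mod $2$; this is the constancy theorem for flat chains mod $2$ in $\R^n$. So $W\on\inte\Delta=m_\Delta\bb{\Delta}\mod 2$ with $m_\Delta\in\{0,1\}$.

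If $m_\Delta=1$ for some $\Delta$ not fully covered by $\varphi(X)$, then $\varphi\circ\varrho$ would essentially cover $\Delta$ mod $2$, which is absurd since $\varphi\circ\varrho$ misses an open subset of $\Delta$. To make this precise, pick $z\in\inte\Delta\setminus\varphi(X)$, which is an open condition since $\varphi(X)$ is compact. A small ball around $z$ misses $\varphi(\varrho(M))$. The straight-line homotopy between $h$ and $\varphi\circ\varrho$ inside simplices, composed with radial projection from $z$, shows that the mod $2$ degree of $h$ over $z$ equals that of $\varphi\circ\varrho$ over $z$, which is $0$. Hence $\bM_2(W)\le\sum_{\Delta\subset\varphi(X)}\Ha^n(\Delta)\le\Ha^n(\varphi(X))\le C\Ha^n(X)$. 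Here $\varphi(X)$ covering $\Delta$ up to measure zero suffices, after the radial-projection normalisation in the proof of Theorem \ref{thme: nagata-factorization}, because the hull equals $\Sigma$ and uncovered simplices have been emptied.

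\textbf{Step 3: conclusion.} With $\eta=\psi\circ h$ we get $\bM_2(\eta_\#\bb{M})\le C^n\bM_2(W)\le C''\Ha^n(X)$, using that $\psi$ is $C$-Lipschitz on each simplex and that the pushforward mass bound localises simplexwise. Choosing $C'\delta\le\epsilon$ gives $d(\varrho,\eta)\le\epsilon$. The main obstacle is Step 2: the degree argument at points $z$ not covered by $\varphi(X)$. It requires that $h$ is homotopic to $\varphi\circ\varrho$ through maps avoiding a neighbourhood of $z$, and that mod $2$ local degrees for merely continuous maps from $M$ into a simplicial complex are well defined and compatible with the current $h_\#\bb{M}$. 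I would first establish this compatibility in the smooth/Lipschitz setting via the area formula, exactly as in Corollary \ref{cor: number-preimage-even}.
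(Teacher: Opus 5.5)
Your argument works, and its skeleton is the paper's: the factorization from Theorem \ref{thme: nagata-factorization}, a Lipschitz approximation $h\colon M\to\Sigma$ of $\varphi\circ\varrho$, and $\eta=\psi\circ h$, with closeness coming from $\psi\circ\varphi\approx\mathrm{id}$. The deformation $p$ in your outline is never used, and the paper's $\eta$ is also just $\psi\circ\xi$.

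You differ from the paper in how you get the $2$-mass bound. The paper does not use the relation between $h$ and $\varphi\circ\varrho$ for this. Instead it uses that every $n$-simplex of $\Sigma$ is covered by $\varphi(X)$. The radial projections in the proof of Theorem \ref{thme: nagata-factorization} arrange this, and it is what lies behind the paper's deduction of $\Ha^n(\Sigma)\le C\Ha^n(X)$ from $\textup{Hull}(\varphi(X))=\Sigma$. The paper then applies the one-line estimate $\bM_2(\eta_\#\bb{M})\le c_n\Ha^n(\eta(M))\le c_n\Ha^n(\psi(\Sigma))\le c_nC^n\Ha^n(\Sigma)$. This holds for any Lipschitz map into $\Sigma$, because a reduction mod $2$ has bounded multiplicity.

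Your degree argument shows instead that $h_\#\bb{M}$ vanishes mod $2$ on every $n$-simplex not contained in $\varphi(X)$. It therefore needs only property (2), $\Ha^n(\varphi(X))\le C\Ha^n(X)$, and not the covering normalisation, so it is more robust. The price is comparing the $\Z/2\Z$-degree of $h$ over $z'\in\inte\Delta$ with the parity of $\#h^{-1}(z')$ via the area formula. That parity computation already shows that the multiplicity is a.e. constant on $\inte\Delta$, so you do not need a separate constancy theorem. Conversely, once you invoke the normalisation, as you do at the end of Step 2, the degree argument is redundant: every $n$-simplex lies in $\varphi(X)$, and $\bM_2(W)\le\Ha^n(\Sigma)\le\Ha^n(\varphi(X))$ directly.

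Two small corrections.

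First, you do not need a homotopy avoiding a neighbourhood of $z$. Since $M$ is closed, $h_*[M]\in H_n(\Sigma;\Z/2\Z)$ is unchanged under any homotopy, for instance the one from Lemma \ref{lemma: lip-retract-homotopies}. Its image in $H_n(\Sigma,\Sigma\setminus z;\Z/2\Z)$ is zero because $\varphi\circ\varrho$ misses $z$.

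Second, uniform closeness does not place $h(y)$ and $\varphi(\varrho(y))$ in a common simplex: points on either side of a shared face are close but not in a common simplex. With approximation distance equal to the side length $\delta$, they need not even lie in simplices sharing a vertex, since the $l^2$ metric is extrinsic. The fix is to take $d(h,\varphi\circ\varrho)<\delta/\sqrt{n+1}$. Then the carriers share a vertex $v$, and passing through $\psi(v)$ gives $d(\eta(y),\varrho(y))\le 3C\delta$.
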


    \begin{proof}
        Let $\epsilon>0$. By Theorem \ref{thme: nagata-factorization} there exists a finite, $n$-dimensional simplicial complex $\Sigma$ equipped with the $l^2$ distance $|\cdot|_{l^2}$ and each simplex in $\Sigma$ is a standard Euclidean simplex of side length $\epsilon$. Furthermore, there exist $C>0$, depending only on the data of $X$, and Lipschitz maps $\varphi\colon X \to \Sigma$, $\psi\colon \Sigma\to E(X)$ with the following properties: $\textup{Hull}(\varphi(X)) = \Sigma$, $\psi$ is $C$-Lipschitz on each simplex, $d(\psi(\varphi(x)),x) \leq C\epsilon$ for every $x \in X$ and $\Ha^n(\varphi(X)) \leq C \Ha^n$. Recall that $\textup{Hull}(\varphi(X))$ is the smallest subcomplex of $\Sigma$ containing $\varphi(X)$ and thus, $\Ha^n(\Sigma) \leq C\Ha^n(X)$. Since $\Sigma$ is an absolute Lipschitz neighborhood retract, Lemma \ref{lemma: lip-approx-easy} implies that there exists a Lipschitz map $\xi\colon M \to \Sigma$ satisfying $d(\xi, \varphi \circ \varrho) < \epsilon$ and $\xi(z), (\varphi \circ \varrho)(z)$ are contained in neighboring simplices for all $z\in M$. Put $\eta = \psi\circ \xi$ and let $z\in M$. It follows from \cite[Lemma 2.3]{BMW} that there exists $y \in \Sigma$ contained in a common simplex with $\xi(z)$ and $(\varphi \circ \varrho)(z)$ and such that 
        $$|\xi(z)-y|_{l^2}+|y-(\varphi \circ \varrho)(z)|_{l^2} \leq 4 \sqrt{n} |\xi(z)-(\varphi \circ \varrho)(z)|_{l^2}.$$
        Therefore, using that $\psi$ is $C$-Lipschitz on each simplex, we have
        \begin{align*}
            d(\eta(z), &\varrho(z)) \leq d(\psi(\xi(z)), \psi(\varphi(\varrho(z)))) + d(\psi(\varphi(\varrho(z))),\varrho(z))
            \\
            &\leq C \big (|\xi(z)-y| + |y-\varphi(\varrho(z))|\big) + C \epsilon \leq (4 \sqrt{n} +1)C \epsilon.
        \end{align*}
        By replacing $\xi_\# \bb{M}$ with a reduction modulo $2$ if necessary, we may assume that $\bM_2(\eta_\#\bb{M}) \leq c_n \Ha^n(\eta(M))$, where $c_n$ depends only on $n$. Therefore, since $\psi$ is $C$-Lipschitz on each simplex of $\Sigma$
        $$\bM_2(\eta_\#\bb{M}) \leq c_n \Ha^n(\eta(M)) \leq c_n \Ha^n(\psi(\Sigma))\leq c_n C^n \Ha^n(\Sigma) \leq c_n C^{n+1}\Ha^n(X).$$
        This completes the proof.        
    \end{proof}

    We can now prove Theorem \ref{thme: existence-non-orient-nagata}.
        
    \begin{proof}[Proof of Theorem \ref{thme: existence-non-orient-nagata}]
        Let $\hat{X}$ be the manifold obtained by gluing two copies of $X$ along their boundaries. It follows that the Nagata dimension of $\hat{X}$ is bounded by $N$ as well. Thus, by Lemma \ref{lemma: fund-class-closed-gives-fund-class-bdry} it suffices to prove the theorem with the additional assumption that $X$ is closed in order to prove the general statement. For $k \in \N$, let $\eta_k \colon M \to E(X)$ be the map given by Lemma \ref{lemma: existence-non-orient-nagata-sequence} for $\epsilon_k= 1/k$ and put $T_k = \eta_{k\#} \bb{M} \in \bI_n(E(X))$. We have 
        $$\sup_i \bM_p(T_i) + \bM_p(\partial T_i) < C \Ha^n(X) < \infty,$$
        where $C$ depends only on the data of $X$. By Theorem \ref{thme: compactness-flat-mod-p} there exist a subsequence of $(T_k)_{k\in \N}$ (still denoted by $(T_k)_{k\in \N}$) and $S \in \cF_n(E(X))$ such that $\cF_2(T_k -S) \to 0$ as $k \to \infty$. Clearly, $S$ is a cycle modulo $2$. Since $d(\varrho,\eta_k) < \epsilon_k$ for each $k\in \N$, the support of each $T_k$ is contained in the $\epsilon_k$ neighborhood of $X$. We conclude that $\norm{S}_p$ is supported on $X$. Therefore, Proposition \ref{prop: rectifiability-flat-current} implies that there exists $T\in \cI_n(X)$ such that $S = T \mod 2$. By Lemma \ref{lemma: mass-equal-mod-mass-for-reduction} and by passing to a reduction of $T$ modulo $2$ if necessary, we may assume that 
        $$\norm{T} = \norm{T}_2 \leq C\Ha^n,$$
        where $C>0$ depends only on $n$. Now, let $\varphi\colon X \to M$ be a Lipschitz map. Since $M$ is an absolute Lipschitz neighborhood retract, we can extend $\varphi$ to some neighborhood of $X$. Hence, for $k \in \N$ sufficiently large the composition $\varphi \circ\eta_k \colon M \to M$ is well-defined and for such $k$ we have 
        $$\varphi_\# T_k = (\varphi \circ \eta_k)_{\#} \bb{M} = \deg (\varphi \circ \eta_k, \Z/2\Z) \cdot \bb{M}.$$
        Since $d(\eta_k,\varrho) < \epsilon_k$ for each $k \in \N$, the composition $\varphi \circ \eta_k$ converges uniformly to $\varphi\circ \varrho $. It follows from Lemma \ref{lemma: lip-retract-homotopies} that for $k$ sufficiently large, the maps $\varphi \circ \eta_k$ and $\varphi\circ \varrho $ are homotopic. Thus, using the homotopy invariance of the degree and that $\varrho$ is a homeomorphism, we get
        $$\varphi_\# T_k =\deg (\varphi \circ \eta_k, \Z/2\Z) \cdot \bb{M} = \deg (\varphi \circ \varrho, \Z/2\Z) \cdot \bb{M} = \deg(\varphi, \Z/2\Z) \cdot \bb{M},$$
        for all $k$ sufficiently large. Passing to the limit we obtain $\varphi_\# T = \deg (\varphi, \Z/2\Z) \cdot \bb{M}$. This completes the proof.
    \end{proof}

\subsection{Surfaces}

    Let $X$ be a metric space with finite Hausdorff $2$-measure that is homeomorphic to a closed smooth surface $M$. It follows from \cite[Theorem 1.3]{nta-rom22} that there exists a Riemannian metric $g$ on $M$ of constant curvature and a continuous, surjective, monotone map $\psi \in W^{1,2}(M, X)$ that is weakly conformal; see also \cite{Meier-Wenger,dimitrios-romney-length}. A map is said to be monotone if it is the uniform limit of homeomorphism from $M$ to $X$. We refer to \cite{nta-rom22} for more details on weakly conformal maps and to \cite{Heinonen-Koskela-Shanmugalingam-Tyson-2015} for more information of the theory of Sobolev maps in metric spaces. One can prove exactly as in \cite[Proposition 6.1]{BMW} that there exist countably many Borel sets $A_k \subset M$ such that $\psi$ restricted to each $A_k$ is $2k$-Lipschitz and $\Ha^n(M\setminus A_k) \leq \epsilon_k/k^2$, where $\epsilon_k$ is some sequence converging to $0$. Using this, the proof of Theorem \ref{thme: existence-non-orient-surface} is not too difficult. 

    \begin{proof}[Proof of Theorem \ref{thme: existence-non-orient-surface}]
        By Lemma \ref{lemma: fund-class-closed-gives-fund-class-bdry} it suffices to prove the theorem when $X$ is homeomorphic to a closed, non-orientable smooth surface $M$. Let $\psi \in W^{1,2}(M, X)$ and $A_k \subset M$, $k \in \N$, be as explained above. Embed $X$ into $E(X)$. Since $\psi$ restricted to each $A_k$ is $2k$-Lipschitz, there exists $2k$-Lipschitz extensions $\psi_k \colon M \to E(X)$ of $\psi|_{A_k}$. For $k \in \N$, we define $T_k = \psi_{k\#}\bb{M} \in \bI_2(E(X))$. We have
        $$\bM_2(T_k) \leq \bM_2(\psi_{k\#}(\bb{M}\on A_k)) + \bM_2(\psi_{k\#}(\bb{M}\on M\setminus A_k))\leq \textup{vol}^*(\psi|_{A_k}) + 4 \varepsilon_k.$$
        for all $k \in \N$. Here, $\textup{vol}^*(\psi|_{A_k})$ denotes the (parametrized) Gromov mass$*$ volume of $\psi$; see \cite{lytchak-wenger1,lytchak-wenger2}. Since $\psi$ belongs to $ W^{1,2}(M, X)$ we have that $\textup{vol}^*(\psi)$ is uniformly bounded. Notice that $\partial T_k = 0 \mod 2$ for all $k\in \N$. Therefore, Theorem \ref{thme: compactness-flat-mod-p} implies that there exists $S \in \cF_2(E(X))$ such that $\cF_2(T_k - S) \to 0$ as $k\to \infty$. It is not difficult to show that the $\psi_k$ converge uniformly to $\psi\colon M \to X$. Hence, $\spt T_k\subset N_{\delta_k}(X)$ for some $\delta_k \to 0$ and in particular, $\spt_2 T \subset X$. It follows from Proposition \ref{prop: rectifiability-flat-current} that there exists $T \in \cI_2(X)$ such that $S = T \mod 2$. By Lemma \ref{lemma: mass-equal-mod-mass-for-reduction} and replacing $T$ by a reduction modulo $2$ if necessary, we may suppose that $\norm{T}_2 = \norm{T} \leq 2 \Ha^2$. Now, let $\varphi\colon X \to M$ be Lipschitz. Recall that $\psi$ is the uniform limit of homeomorphism from $M\to X$ and hence, $\deg(\psi,\Z/2\Z) = [1]\in \Z/2\Z$. We can extend $\varphi$ as a Lipschitz map to an open neighborhood of $X$ in $E(X)$. This is possible because $M$ is an absolute Lipschitz neighborhood retract. It follows that for $k\in \N$ sufficiently large the composition $\varphi\circ \psi_k\colon M \to M$ is well-defined and converges uniformly to $\varphi \circ \psi$. Hence, for $k\in \N$ sufficiently large, Lemma \ref{lemma: lip-retract-homotopies} and the multiplicity property of the degree imply
        $$\varphi_\#T_k = (\varphi\circ \psi_k)_\# \bb{M}= \deg(\varphi\circ\psi_k,\Z/2\Z) \cdot\bb{M} = \deg(\varphi,\Z/2\Z) \cdot\bb{M}.$$
        We conclude that $\varphi_\#T =  \deg(\varphi,\Z/2\Z) \cdot\bb{M}$. Finally, the uniqueness (modulo $2$) of $T$ follows from Lemma \ref{lemma: uniqueness-mod-2}. This completes the proof. 
    \end{proof}

\bibliographystyle{plain}

\end{document}